\newcommand{\R}{\mathbb{R}}
\newcommand{\Jord}{{\mathrm{Jord}}}
\newcommand{\inv}{^{-1}}
\newcommand{\GL}{\mathrm{GL}}
\newcommand{\SO}{\mathrm{SO}}
\newcommand{\OO}{\mathrm{O}}
\newcommand{\SL}{\mathrm{SL}}
\newcommand{\Sp}{\mathrm{Sp}}
\newcommand{\BC}{\mathbb{C}}
\renewcommand{\implies}{\Rightarrow}
\newcommand{\comment}[1]{}
\newtheorem{thm}{Theorem}[section]
\newtheorem{cor}[thm]{Corollary}
\newtheorem{lemma}[thm]{Lemma}
\newtheorem{prop}[thm]{Proposition}
\newtheorem {conj}[thm]{Conjecture}
\newtheorem {ques/conj}[thm]{Question/Conjecture}
\newtheorem {ques}[thm]{Question}
\newtheorem{defn}[thm]{Definition}
\newtheorem{rmk}[thm]{Remark}
\newtheorem{exmp}[thm]{Example}
\newtheorem{algo}[thm]{Algorithm}
\newtheorem{recipe}[thm]{Recipe}
\numberwithin{equation}{section}
\begin{document}

\title[Adams Conjecture and Intersections]{The Adams conjecture and intersections of local Arthur packets}

\author{Alexander Hazeltine}
\address{Department of Mathematics\\
University of Michigan\\
Ann Arbor, MI, 48109, USA}
\email{ahazelti@umich.edu}

\subjclass[2020]{Primary 11F27, 22E50, 11F70}

%\date{\today}

\dedicatory{}

\keywords{Theta Correspondence, Adams Conjecture, local Arthur packet}

%\thanks{}

\begin{abstract}
    The Adams conjecture states that the local theta correspondence sends a local Arthur packet to another local Arthur packet. M{\oe}glin confirmed the conjecture when lifting to groups of sufficiently high rank and also showed that it fails in low rank. Recently, Baki{\'c} and Hanzer described when the Adams conjecture holds in low rank for a representation in a fixed local Arthur packet. However, a representation may lie in many local Arthur packets and each gives a minimal rank for which the Adams conjecture holds. In this paper, we study the interplay of intersections of local Arthur packets with the Adams conjecture.
    %Jointly with Liu and Lo, we introduced the idea of ``the local Arthur packet'' of a representation. Baki{\'c} and Hanzer suspected that the minimum of the above ranks occurs when considering this packet. In this paper, we confirm their suspicion. 
\end{abstract}

\maketitle

\tableofcontents

\section{Introduction}

Let $F$ be a non-Archimedean local field of characteristic $0$, $\epsilon\in\{\pm1\},$
$W_n$ be a $-\epsilon$-Hermitian space of dimension $n$ over $F$, and $V_m$ be an $\epsilon$-Hermitian space of dimension $m$ over $F.$  We consider the isometry groups of $W_n$ and $V_m$, which we denote by $G=G_n$ and $H=H_m$, respectively. We suppose further that $(G,H)$ forms a reductive dual pair and fix an additive character $\psi_F$ of $F.$ The local theta correspondence is a map from the set of equivalence classes of irreducible admissible representations, which we denote by $\Pi(G)$, to those of $H$, also denoted by $\Pi(H)$. For $\pi\in\Pi(G)$, we denote its image by $\theta_{W_n,V_m,\psi_F}(\pi).$ 

 We suppose that $G$ is connected hereinafter for simplicity. We refer to \cite{Bor79} for the following definitions. Let $\Gamma_F$ be the absolute Galois group of $F$ and $W_F$ be the Weil group. We also let $\hat{G}(\BC)$ be the complex dual group of $G.$ Later, we will restrict to the case that $G$ is a split symplectic group and  hence $\hat{G}(\BC)$ is an odd special orthogonal group. The group $\Gamma_F$ acts on $\hat{G}(\BC)$ and we let ${}^LG=\hat{G}(\BC)\rtimes \Gamma_F$ be the $L$-group of $G.$ An $L$-parameter of $G$ is a $\hat{G}(\BC)$-conjugacy class of an admissible homomorphism $\phi: W_F\times\SL_2(\BC)\rightarrow {}^L G$. We denote the set of $L$-parameters of $G$ by $\Phi(G).$ The local Langlands correspondence is a finite-to-one map $rec:\Pi(G)\rightarrow\Phi(G)$ satisfying several properties, most notably that certain arithmetic invariants agree. Given $\phi\in\Phi(G)$, the inverse image $\Pi_\phi:=rec^{-1}(\phi)$ is known as the $L$-packet attached to $\phi.$ Given $\pi\in\Pi(G)$, the $L$-parameters $\phi_\pi:=rec(\pi)$ is called the $L$-parameter attached to $\pi.$

For quasi-split symplectic and orthogonal groups, the local Langlands correspondence has been established by Arthur (\cite{Art13}). However, we note that the local Langlands correspondence stated above is stated for connected groups and so minor alterations are required for orthogonal groups. We refer to \cite{AG17b} for a precise treatment for orthogonal groups. When $H$ is a quasi-split even orthogonal group, we write $\Phi(H)$ for the set of $L$-parameters of $H$ and let $\Pi_\phi$ denote the $L$-packet attached to $\phi$ for $\phi\in\Phi(H).$

The idea of Langlands functoriality is the following. Suppose that we have a suitable homomorphism $\gamma:{}^L G \rightarrow {}^L H$. Note that we did not define ${}^LH$ if $H$ is disconnected. As we restrict to the case that $H$ is a quasi-split orthogonal group later, we simply remark that in this setting ${}^LH=H(\BC).$ Given $\phi\in\Phi(G)$, it follows that $\gamma\circ\phi\in\Phi(H).$ Langlands functoriality then roughly predicts that there should be a similar map from $\Pi(G)$ to $\Pi(H)$, say $\pi\mapsto\pi_\gamma$, which preserves $L$-packets, i.e., if $\pi\in\Pi_\phi$, then $\pi_\gamma\in\Pi_{\gamma\circ\phi}.$ 

As the local theta correspondence is a map from $\Pi(G)$ to $\Pi(H)$, Langlands conjectured that there exists a homomorphism $\gamma:{}^L G \rightarrow {}^L H$ which realizes the local theta correspondence as an instance of Langlands functoriality (\cite{Lan75}). However, this turned out to be false. Indeed, examples were later found for which $\pi_1,\pi_2\in\Pi_\phi$ for some $\phi\in\Phi(G)$; however, $\theta_{W_n,V_m,\psi_F}(\pi_2)\not\in\Pi_{\phi_{\theta_{W_n,V_m,\psi_F}(\pi_1)}}.$

Adams proposed a remedy to the failure of Langlands' conjecture (\cite{Ada89}). Namely, that instead of $L$-packets, one should consider certain enlargements known as local Arthur packets. These packets formed a crucial component of Arthur's proof of the local Langlands correspondence for quasi-split symplectic and orthogonal groups (\cite[Theorem 1.5.1]{Art13}). Similar to $L$-packets, local Arthur packets are parameterized by local Arthur parameters.
Roughly, a local Arthur parameter is a direct sum of irreducible representations
\begin{align*}
    &\psi: W_F \times \SL_2(\mathbb{C}) \times \SL_2(\mathbb{C}) \rightarrow {}^L G, \\
    &\psi = \bigoplus_{i=1}^r \phi_i|\cdot|^{x_i} \otimes S_{a_i} \otimes S_{b_i},
\end{align*}
satisfying the following conditions:
\begin{enumerate}
    \item [(1)]$\phi_i(W_F)$ is bounded and consists of semi-simple elements with $\dim(\phi_i)=d_i$;
    \item [(2)] $x_i \in \R$ and $|x_i|<\frac{1}{2}$;
    \item [(3)]the restrictions of $\psi$ to the two copies of $\SL_2(\mathbb{C})$ are analytic.
\end{enumerate}
Here, $S_k$ denotes the unique $k$-dimensional irreducible representation of $\SL_2(\mathbb{C})$. We let $\Psi(G)$ denote the set of local Arthur parameters of $G.$ For $\psi\in\Psi(G)$, Arthur defined the local Arthur packet $\Pi_\psi$ (\cite[Theorem 1.5.1]{Art13}). Ostensibly, Arthur's definition only defined $\Pi_\psi$ as a finite multi-set with elements in $\Pi(G)$; however, M{\oe}glin gave another construction of $\Pi_\psi$ and showed that local Arthur packets are multiplicity-free (\cite{Moe06a, Moe06b, Moe09a, Moe10, Moe11a}). A representation $\pi$ is said to be of Arthur type if $\pi\in\Pi_\psi$ for some local Arthur parameter $\psi.$
We refer to \S\ref{sec Local Arthur packets for symplectic groups} and \S\ref{sec Local Arthur packets for orthogonal groups} for more details on local Arthur parameters and packets in the quasi-split symplectic and even orthogonal cases.

By the Local Langlands Correspondence for $\GL_{d_i}(F)$, the bounded representation $\phi_i$ of $W_F$ can be identified with an irreducible unitary supercuspidal representation $\rho_i$ of $\GL_{d_i}(F)$ (\cite{HT01, Hen00, Sch13}). Consequently, we identify $\psi$ as
\begin{equation*}
  \psi = \bigoplus_{\rho}\left(\bigoplus_{i\in I_\rho} \rho|\cdot|^{x_i} \otimes S_{a_i} \otimes S_{b_i}\right), 
\end{equation*}
where the first sum runs over a finite set of
irreducible unitary supercuspidal representations $\rho$ of $\GL_d(F)$ for $d \in \mathbb{Z}_{\geq 1}$ and $I_\rho$ denotes an indexing set.

For $\psi\in\Psi(G)$, Arthur attached an $L$-parameter $\phi_\psi\in\Phi(G)$ via
\[
\phi_\psi(w,x)=\psi\left(w,x,\left(\begin{matrix}
    |w|^{\frac{1}{2}} & 0 \\
    0 & |w|^{\frac{-1}{2}} 
\end{matrix}\right)\right).
\]
The map $\psi\mapsto\phi_\psi$ gives an injection from $\Psi(G)\rightarrow\Phi(G)$ such that $\Pi_{\phi_\psi}\subseteq\Pi_\psi$ (\cite[Proposition 7.4.1]{Art13}). In this way, $\Pi_\psi$ is an enlargement of the $L$-packet $\Pi_{\phi_\psi}.$

As mentioned earlier, Adams conjectured that the local theta correspondence should preserve local Arthur packets, instead of $L$-packets (\cite{Ada89}). To make this precise, we introduce the following notation. As in \cite[\S3.2]{GI14}, we fix a pair of characters $\chi_W,\chi_V$ associated to $W_n$ and $V_m$ respectively. We also let $\alpha=m-n-1$ be a positive odd integer and $\theta_{W_n, V_m,\psi_F}(\pi)=\theta_{-\alpha}(\pi)$ for $\pi\in\Pi(G)$. Adams conjectured the following.
\begin{conj}[The Adams conjecture, {\cite[Conjecture A]{Ada89}}]\label{conj Adams intro}
    Let $\pi\in\Pi(G_n)$ such that $\pi\in\Pi_\psi$ for some $\psi\in\Psi(G_n).$ If $\theta_{-\alpha}(\pi)\neq 0,$ then $\theta_{-\alpha}(\pi)\in\Pi_{\psi_\alpha}$ where
    \begin{equation}\label{eqn psi_alpha intro}
    \psi_\alpha=(\chi_W\chi_V^{-1}\otimes\psi)\oplus \chi_W\otimes S_1\otimes S_\alpha.
    \end{equation}
\end{conj}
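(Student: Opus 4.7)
The plan is to approach Conjecture~\ref{conj Adams intro} through M{\oe}glin's explicit inductive construction of local Arthur packets, as developed in \cite{Moe06a, Moe06b, Moe09a, Moe10, Moe11a}. Every $\pi\in\Pi_\psi$ can be realized, starting from a cuspidal representation $\pi_0$ in a suitably ``positive'' Arthur packet $\Pi_{\psi_0}$, by a sequence of partial Jacquet modules, parabolic inductions, and theta lifts along an appropriate tower; correspondingly, $\psi$ is built up from $\psi_0$ by successively appending segments $\rho|\cdot|^{x}\otimes S_{a}\otimes S_{b}$. The strategy is therefore to verify $\theta_{-\alpha}(\pi)\in\Pi_{\psi_\alpha}$ first on the base case (cuspidal $\pi_0$ with highly positive $\psi_0$, where the conjecture is known), and then to propagate it along the construction steps.

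The main technical tool for this propagation is Kudla's filtration on Jacquet modules of theta lifts, which expresses $\mathrm{Jac}_P\bigl(\theta_{-\alpha}(\pi)\bigr)$ in terms of theta lifts of Jacquet modules of $\pi$ after suitable character twists and shifts. Combined with the see-saw identities and the tower property for the theta correspondence, this reduces the inductive step to a combinatorial bookkeeping problem: each time a segment is appended to $\psi$, one must show that the corresponding segment appended to $\psi_\alpha$ is exactly the one predicted by the output of the Kudla filtration. A parallel inductive description of membership in $\Pi_{\psi_\alpha}$ via M{\oe}glin's parametrization on the $H_m$ side then matches the two sides.

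The main obstacle is the low-rank failure already highlighted in the introduction. The combinatorics go through cleanly only when $\alpha$ is large enough that the new summand $\chi_W\otimes S_1\otimes S_\alpha$ does not resonate with segments already appearing in $\chi_W\chi_V^{-1}\otimes\psi$; when such resonances occur, $\theta_{-\alpha}(\pi)$ may still be nonzero but belong to a local Arthur packet attached to a different parameter. Since a single representation $\pi$ may belong to several packets $\Pi_\psi$ at once, the conjecture then becomes a question of selection: among the parameters $\psi$ with $\pi\in\Pi_\psi$, for which one does $\psi_\alpha$ actually capture the theta lift? Understanding this choice, and characterizing the intersection patterns of packets that control it, is the central problem that the paper takes up and is what would have to be resolved to turn the above sketch into a full proof in low rank.
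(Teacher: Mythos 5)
You were asked to prove Conjecture~\ref{conj Adams intro}, but this is a statement the paper itself does not prove and, as stated, cannot be proved: it is Adams' conjecture, which M{\oe}glin showed to be \emph{false} in low rank, and which by Theorem~\ref{thm a=d intro} (\cite[Corollary D]{BH22}) holds precisely when $\alpha\geq d(\pi,\psi)$. Your proposal in effect recognizes this --- the ``main obstacle'' paragraph concedes that for resonant (small) $\alpha$ the lift $\theta_{-\alpha}(\pi)$ may be nonzero yet lie outside $\Pi_{\psi_\alpha}$, and that resolving this is an open selection problem --- so what you have written is not a proof but a sketch of the known partial results together with an acknowledgment that the remaining case is exactly where the conjecture fails.

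Concretely, the inductive scheme you outline (base case for cuspidal $\pi$ in a very positive packet, propagation via Kudla's filtration, see-saw identities, and M{\oe}glin's parameterization on the $H_m$ side) is essentially the mechanism behind M{\oe}glin's theorem that the conjecture holds for $\alpha\gg 0$ (Theorem~\ref{thm Moeglin Adams}) and behind the downward propagation of Baki{\'c}--Hanzer (Recipe~\ref{rec pi_alpha} and Theorem~\ref{thm A BH22}). But the inductive step genuinely breaks for $\alpha<d(\pi,\psi)$: there the representation $\pi_\alpha$ produced by the recipe vanishes, while $\theta_{-\alpha}(\pi)$ can still be nonzero, and then no amount of combinatorial bookkeeping will place it in $\Pi_{\psi_\alpha}$, because it simply is not there. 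The honest conclusion is that the statement is a conjecture with known counterexamples; the paper's contribution is not a proof of it but the comparison result Theorem~\ref{thm main thm} (via Xu's nonvanishing algorithm), showing that $d(\pi,T(\psi))\leq d(\pi,\psi)$ for raising operators $T$, i.e.\ that the failure of the conjecture is mitigated by passing to ``more tempered'' parameters containing $\pi$, with $\psi^{max}(\pi)$ optimal. If you want to salvage your write-up, reframe it as a proof of the $\alpha\gg 0$ case (M{\oe}glin's theorem) or as a reduction of the general case to the quantity $d(\pi,\psi)$, rather than as a proof of the conjecture itself.
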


M{\oe}glin showed that if $\alpha\gg 0,$ then Conjecture \ref{conj Adams intro} is true (\cite[Theorem 5.1]{Moe11c}; see also  Theorem \ref{thm Moeglin Adams}). M{\oe}glin also also exhibited examples where Conjecture \ref{conj Adams intro} fails. These failures lead M{\oe}glin to pose several questions \cite[\S6.3]{Moe11c} related to determining when Conjecture \ref{conj Adams intro} holds. These questions were resolved recently by Baki{\'c} and Hanzer (\cite{BH22}) when $G$ is a symplectic group and $H$ is a quasi-split even orthogonal group. Henceforth, we assume that $G$ is a symplectic group and $H$ is a quasi-split even orthogonal group. Suppose that $\pi\in\Pi(G)$ and $\pi\in\Pi_\psi$ for some $\psi\in\Psi(G).$ Using M{\oe}glin's parameterization of local Arthur packets, Baki{\'c} and Hanzer construct representations $\pi_{\alpha}$ (see Recipe \ref{rec pi_alpha} for details) such that for $\alpha\gg 0$, we have $\pi_{\alpha}=\theta_{-\alpha}\in\Pi_{\psi_\alpha}.$ Furthermore if $\pi_{\alpha-2}\neq 0,$ then $\pi_{\alpha-2}=\theta_{-(\alpha-2)}(\pi)\in\Pi_{\psi_{\alpha-2}}$ (\cite[Theorem A]{BH22}; see also Theorem \ref{thm A BH22}). Consequently, we consider
\begin{equation*}
    d(\pi,\psi):=\min\{\alpha_0\geq 0\ | \ \pi_\alpha\neq 0 \ \mathrm{for \ all} \ \alpha\geq\alpha_0 \}.
\end{equation*}
Baki{\' c} and Hanzer used $d(\pi,\psi)$ to answer Conjecture \ref{conj Adams intro} completely.

\begin{thm}[{\cite[Corollary D]{BH22}}]\label{thm a=d intro}
    Suppose that $\pi\in\Pi_\psi$ for some $\psi\in\Psi(G).$ Then Conjecture \ref{conj Adams intro} holds if and only if $\alpha\geq d(\pi,\psi).$
\end{thm}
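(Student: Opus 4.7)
The proof splits naturally into two implications. The forward direction is a clean inductive application of Theorem \ref{thm A BH22}, while the reverse direction requires a more delicate analysis at the breaking point of M{\oe}glin's construction.

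For the forward direction, assume $\alpha \geq d(\pi,\psi)$. Choose $\alpha' \gg 0$ of the same parity as $\alpha$ (and at least $\alpha$) at which Theorem \ref{thm A BH22} provides the base case $\pi_{\alpha'} = \theta_{-\alpha'}(\pi) \in \Pi_{\psi_{\alpha'}}$. I would then descend by steps of $2$: at each intermediate value $\beta$ with $\alpha \leq \beta \leq \alpha'$, the hypothesis $\alpha \geq d(\pi,\psi)$ together with the definition of $d(\pi,\psi)$ guarantees $\pi_\beta \neq 0$, and the second part of Theorem \ref{thm A BH22} then yields $\pi_{\beta - 2} = \theta_{-(\beta - 2)}(\pi) \in \Pi_{\psi_{\beta - 2}}$. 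Iterating until $\beta = \alpha$ delivers $\theta_{-\alpha}(\pi) = \pi_\alpha \neq 0$ and $\theta_{-\alpha}(\pi) \in \Pi_{\psi_\alpha}$; that is, Conjecture \ref{conj Adams intro} holds at $\alpha$.

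For the reverse direction, assume $\alpha < d(\pi,\psi)$. By minimality in the definition of $d(\pi,\psi)$, there is a largest $\beta_0$ with $\alpha \leq \beta_0 \leq d(\pi,\psi) - 2$ and $\pi_{\beta_0} = 0$; moreover $\pi_\beta \neq 0$ for all $\beta > \beta_0$ of the correct parity. Descending as above from $\alpha' \gg 0$ down to $\beta_0 + 2$ yields $\theta_{-(\beta_0 + 2)}(\pi) = \pi_{\beta_0 + 2} \in \Pi_{\psi_{\beta_0+2}}$, but the descent stalls at $\beta_0$ since Theorem \ref{thm A BH22} needs $\pi_{\beta_0} \neq 0$. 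The plan is then to show that $\theta_{-\beta_0}(\pi)$ is nonzero yet lies outside $\Pi_{\psi_{\beta_0}}$, thereby exhibiting a failure of Conjecture \ref{conj Adams intro} at $\beta_0$, and hence at the chosen $\alpha$ upon further propagation of the failure.

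The main obstacle is this final step. The nonvanishing $\theta_{-\beta_0}(\pi) \neq 0$ should follow from the Kudla-Rallis conservation relations together with $\theta_{-(\beta_0+2)}(\pi) \neq 0$, which ensures we are past first occurrence in the Witt tower. The harder point is to identify the Arthur packet actually containing $\theta_{-\beta_0}(\pi)$ and to verify that it differs from $\Pi_{\psi_{\beta_0}}$. This requires tracking precisely how M{\oe}glin's construction of $\pi_{\beta_0}$ degenerates when $\pi_{\beta_0} = 0$ and comparing the Jordan-block data of the Arthur parameter realized by the true theta lift with that prescribed by $\psi_{\beta_0}$; Baki{\'c}-Hanzer's explicit recipe for $\pi_\alpha$ would guide this comparison.
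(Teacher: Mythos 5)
There is a genuine gap, and it sits exactly where you flagged it. (For context: the paper does not prove this statement at all — it is quoted from \cite{BH22} as Corollary D — so your attempt has to be judged on its own.) Your forward direction is fine: starting at $\alpha'\gg 0$ where Theorem \ref{thm Moeglin Adams} gives $\theta_{-\alpha'}(\pi)=\pi_{\alpha'}\in\Pi_{\psi_{\alpha'}}$ and descending in steps of $2$ via Theorem \ref{thm A BH22}, using that $\alpha\geq d(\pi,\psi)$ forces $\pi_\beta\neq 0$ at every intermediate level, is precisely the right mechanism. But the reverse direction is only a plan, not a proof, and the plan as stated has two problems. First, the nonvanishing of $\theta_{-\beta_0}(\pi)$ does not follow from the conservation relation (Theorem \ref{thm conservation relation}): nonvanishing of theta lifts persists \emph{upward} in a Witt tower, not downward, so $\theta_{-(\beta_0+2)}(\pi)\neq 0$ tells you nothing about $\theta_{-\beta_0}(\pi)$; indeed on the going-up tower the lift genuinely vanishes for all $\alpha<d^+(\pi,\psi)=m^{+,\alpha}(\pi)$, and the theorem's ``fails'' must be read as $\theta_{-\alpha}(\pi)\notin\Pi_{\psi_\alpha}$ (the set $\mathcal{A}(\pi,\psi)$ convention of Theorem \ref{thm a=d}), which covers the zero case — your framing, which insists on a nonzero lift landing in a wrong packet, is both stronger than needed and false in general. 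Second, even granting failure at $\beta_0=d(\pi,\psi)-2$, ``propagation of the failure'' down to an arbitrary $\alpha<\beta_0$ is asserted without any mechanism.

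The missing ingredient that makes the reverse direction work is the uniqueness half of Baki{\'c}--Hanzer's Theorem B: if $\theta_{-\alpha}(\pi)$ lies in $\Pi_{\psi_\alpha}$ at all, then it must equal the specific element $\pi_\alpha$ produced by Recipe \ref{rec pi_alpha} (in particular $\pi_\alpha\neq 0$). With that in hand one argues by contrapositive rather than by locating the true packet of the lift: suppose the conjecture holds at some $\alpha$, i.e.\ $\theta_{-\alpha}(\pi)\in\Pi_{\psi_\alpha}$; then $\theta_{-\alpha}(\pi)=\pi_\alpha\neq 0$, and the upward propagation of Theorem \ref{thm mA well-def} gives $\theta_{-(\alpha+2k)}(\pi)=\pi_{\alpha+2k}\in\Pi_{\psi_{\alpha+2k}}$ for all $k\geq 0$, so $\pi_\beta\neq 0$ for all $\beta\geq\alpha$ and hence $\alpha\geq d(\pi,\psi)$ by the definition of $d$. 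This avoids entirely the hard question you set yourself — identifying which Arthur packet actually contains $\theta_{-\beta_0}(\pi)$ when $\pi_{\beta_0}=0$ (it may not be of Arthur type at all, cf.\ Example \ref{exmp stable Arthur}) — which is why your sketch stalls there. Without the Theorem B identification, or some substitute for it, your reverse direction does not close.
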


We note that the construction of $\pi_{\alpha}$ depends on the choice of local Arthur parameter $\psi.$ It is possible that one reason for Conjecture \ref{conj Adams intro} to fail is that $\theta_{-\alpha}(\pi)\not\in\Pi_{\psi_\alpha}$ but instead lies in another local Arthur packet. This failure may be remedied by choosing another local Arthur packet for $\pi.$

Let $\Psi(\pi)=\{\psi\in\Psi(G) \ | \ \pi\in\Pi_\psi\}.$ To each $\psi\in\Psi(\pi)$, we may attach $d(\pi,\psi)$ as above. Since $G$ is a symplectic group, the set $\Psi(\pi)$ can be explicitly computed by work of Atobe (\cite{Ato23}) or, independently, the work of Liu, Lo and the author (\cite{HLL22}). Baki{\'c} and Hanzer suspected that $d(\pi,\psi)$ will be lower if $\psi$ is ``more tempered'' (\cite[p. 5]{BH22}). This idea of ``more tempered'' is made rigorous using the notion of raising operators (see Definition \ref{def raising operator}) that were introduced in \cite{HLL22}. This leads to a partial order on $\Psi(\pi).$ Namely, if $\psi_1,\psi_2\in\Psi(\pi)$ and either $\psi_1=\psi_2$ or $\psi_1=T_1 \circ \cdots \circ T_m(\psi_2)$ for some sequence of raising operators $(T_l)_{l=1}^m$, then we write $\psi_1 \geq_{O} \psi_2$. The main goal of this article is the following theorem which confirms the above suspicion of Baki{\'c} and Hanzer.
\begin{thm}\label{thm main thm intro}
    Let $\pi\in\Pi_\psi$ for some $\psi\in\Psi(G)$. Suppose that $T$ is a raising operator and $\pi\in\Pi_{T(\psi)}.$ Then
    \begin{equation*}
        d(\pi,T(\psi))\leq d(\pi,\psi).
    \end{equation*}
\end{thm}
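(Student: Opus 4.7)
The plan is to translate the desired inequality into a non-vanishing statement about the Baki\'c--Hanzer construction and then exploit the explicit combinatorial form of a raising operator on M\oe glin's data. By definition of $d(\pi,\psi)$, it suffices to prove that for every $\alpha \geq d(\pi,\psi)$, the representation $\pi_\alpha$ produced by Recipe \ref{rec pi_alpha} applied to the pair $(\pi, T(\psi))$ is nonzero. Once non-vanishing is known, Theorem \ref{thm A BH22} forces this $\pi_\alpha$ to coincide with $\theta_{-\alpha}(\pi)$, and no further comparison is needed; in particular $\theta_{-\alpha}(\pi)$ is known to be nonzero via the construction for $\psi$, so the whole argument reduces to showing that passing from $\psi$ to $T(\psi)$ cannot introduce a new vanishing inside the recipe.

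First I would reduce to the case of a single elementary raising operator. The partial order $\geq_O$ is by definition generated by finite compositions of the elementary moves listed in Definition \ref{def raising operator}; since the inequality $d(\pi,T(\psi)) \leq d(\pi,\psi)$ is transitive, an induction on the number of factors in a witnessing composition reduces the theorem to the case that $T$ is a single elementary raising operator. In this reduced case, I would next match M\oe glin's parameterization of $\pi$ inside $\Pi_\psi$ with that inside $\Pi_{T(\psi)}$, using the explicit description of $\Psi(\pi)$ given in \cite{HLL22,Ato23}. Since an elementary raising operator modifies the Jordan block data in a prescribed local way (merging two blocks into a more tempered configuration, or shifting an exponent $x_i$ toward $0$), this matching identifies in a controlled manner the multisegment input that Recipe \ref{rec pi_alpha} consumes when applied to $T(\psi)$ as opposed to $\psi$.

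The core step is then a non-vanishing comparison, carried out case by case over the elementary raising operators. Recipe \ref{rec pi_alpha} proceeds by an ordered sequence of Jacquet-module extractions and socle operations governed by the Jordan blocks of $\psi_\alpha$; each step fails to vanish iff a combinatorial support condition is satisfied. The key technical claim I would aim to establish is that for each elementary raising operator $T$, the analogous support conditions for $T(\psi)_\alpha$ are \emph{implied} by those for $\psi_\alpha$. In other words, an elementary raising operator can only shorten, reorder, or trivialize the sequence of extractions demanded by the recipe, never strengthen a vanishing. Granting this, the non-vanishing of $\pi_\alpha$ for $\psi$ at a given $\alpha \geq d(\pi,\psi)$ propagates to $T(\psi)$, which is exactly what is required.

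The main obstacle is this final combinatorial comparison. Although the philosophy that ``more tempered'' parameters make the theta lift survive at lower rank is morally clear and is exactly the intuition formulated by Baki\'c and Hanzer, the recipe for $\pi_\alpha$ is a delicate iterated procedure whose output depends subtly on the ordering of blocks and on cancellation phenomena at the boundary of each Jacquet step. Controlling this uniformly across the elementary cases will require combining M\oe glin's construction of $\Pi_\psi$ with the intersection calculus of \cite{HLL22}; most likely the cleanest route is to reformulate the recipe so that its key vanishing invariants are manifestly monotone under each elementary raising operator, which is where I would expect the technical bulk of the proof to lie.
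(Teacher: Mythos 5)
Your overall skeleton (reduce to a nonvanishing comparison for the recipe representations and argue that a raising operator cannot create new vanishing) points in the same direction as the paper, but the proposal stops exactly where the proof actually lives. The central claim you say you ``would aim to establish'' --- that the nonvanishing conditions for $T(\psi)_\alpha$ are implied by those for $\psi_\alpha$ --- is precisely the content of the paper's Theorem \ref{thm alpha neq 0 implies Talpha neq 0}, and you give no argument for it; you even note that the ``technical bulk'' is still to be found. Moreover, your description of Recipe \ref{rec pi_alpha} as a sequence of Jacquet/socle extractions with stepwise support conditions is not how the nonvanishing is actually controlled: the recipe only produces M{\oe}glin data $(\psi_\alpha,\underline{\zeta},\underline{l},\underline{\eta})$ (with row exchanges when the added block $\chi_W\otimes S_1\otimes S_{\alpha-2i}$ crosses existing blocks), and deciding whether that data gives a nonzero representation requires Xu's nonvanishing algorithm (Algorithm \ref{algo Xu nonvanishing}), which your plan never invokes and which is the engine of the whole argument.

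There is also a structural mismatch that would likely derail a direct attempt at your ``same-$\alpha$ monotonicity'': the paper does not deduce $\pi_{T,\alpha}\neq 0$ from $\pi_\alpha\neq 0$ at a fixed $\alpha$. Instead it runs a downward induction in steps of $2$ from the range of Theorem \ref{thm Moeglin Adams}, showing that $\pi_{\alpha-2}\neq 0$ \emph{together with} $\pi_{T,\alpha}\neq 0$ forces $\pi_{T,\alpha-2}\neq 0$; both inputs are genuinely used (e.g.\ the nonvanishing of $\pi_{\alpha-2}$ supplies the sign conditions needed to perform the row exchange with the added block, while many of the separated-block conditions are imported from $\pi_{T,\alpha}\neq 0$). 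Before that, the problematic configurations where the added block interacts with a $\zeta=-1$ block moved by $T$ are eliminated by the obstruction lemmas (Lemmas \ref{lemma partial obstruction dual ui dual zeta=-1}, \ref{lemma partial obstruction dual ui dual 3' zeta=-1}, \ref{lemma partial obstruction ui inverse 3' zeta=-1}), which show the $\psi$-side already vanishes there, so that one may assume $\frac{\alpha-3}{2}>A_j$; the $\zeta=1$ interactions are handled separately in Lemma \ref{lemma operator on zeta=1}. Your plan anticipates none of this case structure, nor the preliminary reductions that make it tractable (good parity via Kudla's filtration in Lemma \ref{lem Red of Adams to gp}, the reduction to $\rho=\chi_V$ in Lemma \ref{lem red to Chi_W}, and the separate, easy treatment of $dual_k^-$ in Corollary \ref{cor main theorem for dual_k^-}); your reduction to a single elementary operator is unnecessary since the statement already concerns one raising operator. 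As it stands the proposal is a plausible outline, not a proof.
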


The partial order $\geq_O$ on $\Psi(\pi)$ remarkably has unique maximal and minimal elements.

\begin{thm}[{\cite[Theorem 1.16(2)]{HLL22}}]\label{thm psi max min intro}
    Suppose that $\pi\in\Pi(G)$ is of Arthur type. Then there exists unique maximal and minimal elements denoted by $\psi^{max}(\pi)$ and $ \psi^{min}(\pi)$, respectively, in $\Psi(\pi)$ with respect to the partial order $\geq_O.$
\end{thm}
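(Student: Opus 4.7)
The plan is to prove existence and uniqueness by leveraging an explicit combinatorial description of $\Psi(\pi)$ together with a careful analysis of how raising operators act on this description.

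First, I would invoke the explicit classification of $\Psi(\pi)$ for a symplectic $G$, provided by Atobe's algorithm \cite{Ato23} or equivalently the parameterization in \cite{HLL22}. For a fixed $\pi$ of Arthur type, each $\psi\in\Psi(\pi)$ is encoded by combinatorial data (multisets of Jordan blocks $(a_i,b_i)$ indexed by unitary supercuspidal supports) subject to admissibility constraints determined by $\pi$. Since this data is finite and bounded, $\Psi(\pi)$ is a finite set, so every ascending and descending chain terminates.

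Next, I would unpack how a raising operator $T$ acts on this combinatorial data. Concretely, $T$ alters a pair of Jordan blocks in a prescribed way (increasing one $b_i$ while adjusting another), and the partial order $\geq_O$ is by definition the reflexive-transitive closure of such elementary moves on $\Psi(\pi)$. For existence, I would then define $\psi^{max}(\pi)$ to be the terminus of any maximal sequence of raising operators applied to some starting $\psi\in\Psi(\pi)$; this terminates by finiteness. Dually, $\psi^{min}(\pi)$ is the terminus of iterated inverses of raising operators.

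The crucial step is uniqueness: I must show the process is confluent, so that different choices at each stage lead to the same terminal element. I plan to establish a local confluence (``diamond'') lemma of the following shape: if $\psi\in\Psi(\pi)$ and $T_1,T_2$ are raising operators with $T_1(\psi),T_2(\psi)\in\Psi(\pi)$, then there exist (possibly modified) raising operators $T_1',T_2'$ with
\[
T_1'\bigl(T_2(\psi)\bigr)=T_2'\bigl(T_1(\psi)\bigr)\in\Psi(\pi).
\]
Combined with the Noetherian property above, Newman's lemma then yields global confluence and hence unique maximal and minimal elements $\psi^{max}(\pi)$ and $\psi^{min}(\pi)$.

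The main obstacle is the diamond lemma, which requires a detailed case analysis of how two raising operators interact when applied to overlapping Jordan blocks. The explicit description of $\Psi(\pi)$ from \cite{HLL22,Ato23} is essential: it reduces the assertion to a finite combinatorial check in which each raising operator corresponds to a concrete move on multi-segments, making commutation or the construction of $T_1',T_2'$ verifiable directly. All other ingredients (finiteness, termination, and closure of $\Psi(\pi)$ under the relevant moves) are then formal consequences of the classification.
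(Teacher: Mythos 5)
The paper does not prove this statement at all: it is quoted directly from \cite[Theorem 1.16(2)]{HLL22}, so there is no internal argument to compare yours against. Judged on its own terms, your proposal is a reasonable high-level strategy (finiteness of $\Psi(\pi)$, termination, and a Newman-type confluence argument), but it defers exactly the content that makes the theorem nontrivial. The ``diamond lemma'' you posit is not a formal consequence of the classification: a raising operator $T$ applied to $\psi\in\Psi(\pi)$ need not keep $\pi$ in the packet, and whether $\pi\in\Pi_{T(\psi)}$ depends on the extended parameterization data (Atobe's or M{\oe}glin's $(\underline{l},\underline{\eta})$-data), not just on the Jordan blocks of $\psi$. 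So local confluence must be verified at the level of parameters together with the data certifying $\pi\in\Pi_\psi$, and that case analysis of interacting operators is precisely the hard combinatorial work carried out in \cite{HLL22}; asserting it is ``verifiable directly'' is where your proof stops short.

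There is also a structural gap independent of the diamond lemma. Newman's lemma gives a unique normal form for each element of $\Psi(\pi)$, i.e.\ a unique maximal element reachable from a fixed starting parameter. To conclude that $\Psi(\pi)$ has a \emph{unique} maximal element (and dually a unique minimal one) you additionally need that any two elements of $\Psi(\pi)$ are joined by a zigzag of raising operators and their inverses staying inside $\Psi(\pi)$; otherwise $\Psi(\pi)$ could split into several components, each with its own terminus. This connectivity is itself a main theorem of \cite{HLL22} (and of \cite{Ato23}) — that the listed operators exhaust all of $\Psi(\pi)$ starting from any member — and it is not among the things you can label a ``formal consequence'' of the classification; it is part of the classification. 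So your outline is plausible as a roadmap but, as written, both of its load-bearing steps (confluence with the packet-membership data tracked, and connectivity of $\Psi(\pi)$ under the moves) remain unproven.
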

Moreover, $\psi^{max}(\pi)$ and $ \psi^{min}(\pi)$ are the unique maximal and minimal elements with represent to many other orderings (\cite[Theorem 1.16]{HLL22} and \cite[Theorems 1.9, 1.12]{HLLZ22}). Furthermore, if $\pi\in\Pi_{\phi_\psi}$ for some $\psi\in\Psi(G)$, then $\psi^{max}(\pi)=\psi$ (\cite[Theorem 9.5]{HLL22}). All of this evidence lead to $\psi^{max}(\pi)$  being called ``the'' local Arthur parameter of $\pi$ in \cite{HLL22}.
Combining Theorems \ref{thm main thm intro} and \ref{thm psi max min intro}, we immediately obtain the following theorem.
\begin{thm}\label{thm main max thm intro}
    Let $\pi$ be a representation of $G$ of Arthur type. Then for any $\psi\in\Psi(\pi)$, we have
    \begin{equation*}
        d(\pi,\psi^{max}(\pi))\leq d(\pi,\psi)\leq d(\pi,\psi^{min}(\pi)).
    \end{equation*}
\end{thm}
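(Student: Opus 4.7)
The plan is to obtain Theorem \ref{thm main max thm intro} as a direct iteration of Theorem \ref{thm main thm intro} along chains of raising operators supplied by Theorem \ref{thm psi max min intro}. First I would invoke Theorem \ref{thm psi max min intro} to produce the unique maximal and minimal elements $\psi^{max}(\pi), \psi^{min}(\pi) \in \Psi(\pi)$ with respect to $\geq_O$, which immediately give the comparisons
\begin{equation*}
    \psi^{max}(\pi) \geq_O \psi \geq_O \psi^{min}(\pi)
\end{equation*}
for the fixed $\psi \in \Psi(\pi)$.

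Next I would unpack the definition of $\geq_O$: unless $\psi = \psi^{max}(\pi)$, there is a sequence of raising operators $T_1, \dots, T_m$ such that $\psi^{max}(\pi) = T_m \circ \cdots \circ T_1(\psi)$. Setting $\psi_0 = \psi$ and $\psi_k = T_k(\psi_{k-1})$ for $1 \le k \le m$, so that $\psi_m = \psi^{max}(\pi)$, each $\psi_k$ belongs to $\Psi(\pi)$ by virtue of participating in a chain realizing $\geq_O$ inside $\Psi(\pi)$. Hence each step $\psi_{k-1} \mapsto \psi_k = T_k(\psi_{k-1})$ satisfies the hypotheses of Theorem \ref{thm main thm intro}, and I may apply that theorem at every step to obtain $d(\pi, \psi_k) \leq d(\pi, \psi_{k-1})$. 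A telescoping induction on $k$ then yields
\begin{equation*}
    d(\pi, \psi^{max}(\pi)) \leq d(\pi, \psi).
\end{equation*}
The symmetric argument, applied to a chain of raising operators witnessing $\psi \geq_O \psi^{min}(\pi)$, gives the companion inequality $d(\pi, \psi) \leq d(\pi, \psi^{min}(\pi))$.

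The only subtlety is ensuring that every intermediate parameter $\psi_k$ lies in $\Psi(\pi)$, since otherwise Theorem \ref{thm main thm intro} could not be applied at that step; this is essentially built into the definition of $\geq_O$ as a partial order on $\Psi(\pi)$ via raising operators from \cite{HLL22}, but I would state it explicitly as the one genuine ingredient beyond the two cited theorems. Once this is noted, the proof collapses to the two short inductions above, consistent with the author's remark that the theorem is obtained \emph{immediately} from Theorems \ref{thm main thm intro} and \ref{thm psi max min intro}.
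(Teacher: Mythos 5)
Your proposal is exactly the paper's (implicit) argument: the paper derives this theorem ``immediately'' by combining Theorem \ref{thm main thm intro} with Theorem \ref{thm psi max min intro}, i.e.\ by iterating the one-step inequality $d(\pi,T(\psi))\leq d(\pi,\psi)$ along chains of raising operators joining $\psi^{min}(\pi)$, $\psi$, and $\psi^{max}(\pi)$, just as you do. Your flagged subtlety --- that each intermediate parameter in the chain must lie in $\Psi(\pi)$ so that Theorem \ref{thm main thm intro} applies --- is the right thing to make explicit (note $\geq_O$ is defined on $\Psi(G_n)$, not on $\Psi(\pi)$), and it is supplied by the construction in \cite{HLL22}, where the operators act on the parameterization data of $\pi$ itself and hence keep $\pi$ in every intermediate packet.
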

In particular, to determine if $\theta_{-\alpha}(\pi)\in\Pi_{\psi_\alpha}$ for some $\psi\in\Psi(G)$, it is sufficient to check it for $\psi=\psi^{min}(\pi).$ On the other hand, if $\theta_{-\alpha}(\pi)\in\Pi_{\psi_\alpha}$ for some $\psi\in\Psi(G)$, then $\theta_{-\alpha}(\pi)\in\Pi_{(\psi^{max}(\pi))_\alpha}$.

The key idea of the proof of Theorem \ref{thm main thm intro} is to use Xu's nonvanishing algorithm (\cite[\S8]{Xu21}; see Algorithm \ref{algo Xu nonvanishing}). This algorithm determines precisely when $\pi_\alpha\neq 0.$ Let $T$ be a raising operator and $\pi\in\Pi_\psi\cap\Pi_{T(\psi)}$ for some $\psi\in\Psi(G).$ Furthermore, we let $\pi_{\alpha}$ be the representation defined using the parameterization of $\pi\in\Pi_\psi$ (see Recipe \ref{rec pi_alpha}). Similarly, we let $\pi_{T,\alpha}$ be the representation defined using the parameterization of $\pi\in\Pi_{T(\psi)}.$ Suppose further that $\pi_{\alpha-2}\neq0$ and $\pi_{T,\alpha}\neq0.$ Using Xu's nonvanishing algorithm, we show that the nonvanishing conditions of $\pi_{\alpha-2}\neq0$ and $\pi_{T,\alpha}\neq0$ imply the nonvanishing conditions for $\pi_{T,\alpha-2}$ and hence $\pi_{T,\alpha-2}\neq 0$.

We remark now on a related problem. First, it would be beneficial to understand how to compute $d(\pi,\psi)$ without using a parameterization of local Arthur packets. In one setting, Baki{\'c} and Hanzer showed that it is (roughly) equal to the first occurrence of the local theta correspondence, but this fails in other settings (\cite[Theorem 2 and Example 7.3]{BH22}, see also Theorem \ref{thm BH up down}). We predict a generalization of this result when $\psi=\psi^{max}(\pi)$ in Conjecture \ref{conj stable Arthur}. Namely, $d(\pi,\psi^{max}(\pi))$ is expected to be the minimal $\alpha_0\geq 1$ such that $\theta_{-\alpha}(\pi)$ is of Arthur type for any $\alpha\geq\alpha_0.$ This conjecture implies that $\theta_{-(\alpha_0-2)}(\pi)$ must not be of Arthur type if $\alpha_0>1$ and hence cannot be controlled by the Adams conjecture (Conjecture \ref{conj Adams intro}).
We further suspect that $d(\pi,\psi)$ is either $d(\pi,\psi^{max}(\pi))$ or the maximum of the obstructions given in Lemmas \ref{lemma partial obstruction dual ui dual zeta=-1}, \ref{lemma partial obstruction dual ui dual 3' zeta=-1}, and \ref{lemma partial obstruction ui inverse 3' zeta=-1}.

Another related problem would be to extend the above results beyond quasi-split symplectic and even orthogonal groups. Indeed, the work of \cite{BH22} is expected to generalize to metaplectic-odd orthogonal and unitary dual pairs. However, the work in this article requires Xu's nonvanishing algorithm (\cite[\S8]{Xu21}; see also Algorithm \ref{algo Xu nonvanishing}) which is proved only for quasi-split symplectic and orthogonal groups and also the results of \cite{HLL22} which are known only for split symplectic and odd special orthogonal groups.

Here is the outline of this article. In \S\ref{sec Setup}, we recall the necessary background and results needed in our study of the Adams conjecture (Conjecture \ref{conj Adams intro}). In particular, in \S\ref{sec Adams conjecture}, we recall the conjecture, along with the relevant results of \cite{BH22, Moe11c} on the topic. We also state our main theorem, Theorem \ref{thm main thm}, and the conjectural description of $d(\pi,\psi^{max}(\pi))$  (Conjecture \ref{conj stable Arthur}) mentioned above. In \S\ref{sec Xu nonvanish}, we recall Xu's nonvanishing algorithm (Algorithm \ref{algo Xu nonvanishing}) along with several relevant results. Finally, in \S\ref{sec proof of main thm}, we describe several obstructions to the Adams conjecture and prove Theorem \ref{thm main thm}. We also provide examples illustrating the main ideas of the paper.

\subsection*{Acknowledgements} 
The author thanks Stephen DeBacker, Baiying Liu, and Chi-Heng Lo for their helpful discussions and constant support and interest.

\section{Setup}\label{sec Setup}

Let $F$ be a non-Archimedean local field of characteristic $0$. We fix $\epsilon=1$ and let
$W_n$ be a $-\epsilon$-Hermitian space of even dimension $n$ over $F$ and $V_m$ be an $\epsilon$-Hermitian space of even dimension $m$ over $F.$ The isometry group of $W_n$, which we denote by $G=G_n$, is a symplectic group and the isometry group of $V_m$, which we denote by $H=H_m$, is an orthogonal group.

Let $\mathbb{H}$ denote a hyperbolic plane. Any $\epsilon$-Hermitian space $V_m$ has a Witt decomposition
\begin{equation}\label{eqn Witt decomp}
    V_m=V_{m_0}+V_{r,r},
\end{equation}
where $m=m_0+2r$, $V_{m_0}$ is anisotropic and $V_{r,r}\cong \mathbb{H}^r.$ The isomorphism class of $V_m$ uniquely determines the Witt index $r$ and the space $V_{m_0}$. Fix an anisotropic $\epsilon$-Hermitian space $V_{m_0}.$ Then we associate a Witt tower to $V_{m_0}$ as follows:
\begin{equation}\label{eqn Witt tower}
    \mathcal{V}=\{V_{m_0}+V_{r,r} \ | \ r\geq 0\}.
\end{equation}

Let $d,c\in F^\times$. We let
$$
V_{(d,c)}=F[X]/(X^2-d)
$$
be the 2-dimensional quadratic space over $F$ with bilinear form
$$
(\alpha,\beta)=c\mathrm{tr}(\alpha\overline{\beta}),
$$
where if $\beta=a+bX,$ then $\overline{\beta}=a-bX.$ We say $V_m$ is associated with $V_{(d,c)}$ if $V_{m_0}\cong V_{(d,c)}.$ 

If $V_m$ is associated with $V_{(d,c)}$, then there exists another orthogonal space $V_m^-$ such that $\mathrm{dim}(V_m)=m=\mathrm{dim}(V_m^-)$ and  $\mathrm{disc}(V_m)=d=\mathrm{disc}(V_m^-)$, but $V_m^-\not\cong V_m.$ If $d\in (F^\times)^2$ and $m>2$ then $V_m^-=D\oplus V_{r-1,r-1}$ where $D$ is the unique quaternion algebra over $F.$ The isometry group of $V_m^-$ is an orthogonal group; however, it is not quasi-split. As Xu's non-vanishing algorithm (Algorithm \ref{algo Xu nonvanishing}) only applies to quasi-split orthogonal groups (\cite{Xu21}), we avoid this case. 

Consequently, hereinafter we assume that $d\not\in (F^\times)^2.$ In this case, $V_m^-$ is associated to $(d,c')$ where $c'\not\in cN_{E/F}(E^\times)$. Here $E=F(\sqrt{d})$ and $N_{E/F}$ is the norm map. Note that this does indeed determine $V_m^-$ since $V_{(d,c)}\cong V_{(d',c')}$ if and only if $d\equiv d' \ \mathrm{mod} (F^\times)^2$ and $c\equiv c' \ \mathrm{mod} N_{E/F}(E^\times)$ where $E=F(\sqrt{d})=F(\sqrt{d'}).$

Now fix $d\not\in (F^\times)^2$ and assume that $V_m$ is associated to $V_{(d,c)}.$ Let $V_m^-$ be the orthogonal space defined above. From $V_m$ and $V_m^-$, we obtain two Witt towers
\begin{align}
\label{eqn tower+} \mathcal{V}^+ &=\{V_{(d,c)}+V_{r,r} \ | \ r\geq 0\},    \\
\label{eqn tower-}  \mathcal{V}^- &=\{V_{(d,c')}+V_{r,r} \ | \ r\geq 0\}.
\end{align}
For any $V\in \mathcal{V}^+\cup \mathcal{V}^-$, the isometry group of $V$ is a quasi-split even orthogonal group $O(V)$.

\subsection{Representations}

Let $G'=G_n,H_m.$ We let $\Pi(G')$ be the set of equivalence classes of irreducible admissible representations of $G'.$ Fix a Borel subgroup $B_{G'}$ of $G'$ and let $P$ be a standard parabolic subgroup. Then $P=MN$ where $M$ is its Levi subgroup and $N$ is its unipotent radical. Furthermore, we have $M\cong\GL_{n_1}(F)\times\GL_{n_2}(F)\times\cdots\GL_{n_r}(F)\times G''$ where $G''$ is a group of the same type as $G'.$ Let $\tau_i$ be an irreducible admissible representation of $\GL_{n_i}(F)$ for $i=1,\dots,r$ and $\sigma$ be an irreducible admissible representation of $G''.$ We let
\[
\tau_1\times\tau_2\times\cdots\times\tau_r\rtimes\sigma=i_P^{G'}(\tau_1\otimes\tau_2\otimes\cdots\otimes\tau_r\otimes\sigma)
\]
denote the normalized parabolic induction from $P$ to $G'.$ We also let $r_{P}(\pi)$ be the Jacquet module of $\pi$ with respect to a (possibly nonstandard) parabolic subgroup $P$ of $G_n.$ In particular, we need it for the opposite parabolic subgroup, denoted $\overline{P}$, to $P.$ We recall an instance of Frobenius reciprocity below.

\begin{lemma}[Frobenius Reciprocity]\label{lemma Frobenius rec}
    Let $P=MN$ and $\pi'$ be  an irreducible admissible representation of $M$ and $\pi$ be an irreducible admissible representation of $G'$. Then we have 
    \begin{enumerate}
        \item $\mathrm{Hom}_{G'}( \pi, i_P^{G'}(\pi') )\cong \mathrm{Hom}_M(  r_{P}(\pi), \pi')$ and
    \item $\mathrm{Hom}_{G'}( i_P^{G'}(\pi'), \pi )
    \cong \mathrm{Hom}_M( \pi', r_{\overline{P}}(\pi)).$
        \end{enumerate}
\end{lemma}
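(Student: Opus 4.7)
The plan is to present this as a standard citation-type lemma: both statements are well-known forms of Frobenius reciprocity for the Bernstein--Zelevinsky normalized parabolic induction and Jacquet functors of a reductive $p$-adic group, and the proof simply amounts to invoking the appropriate adjunction in each case.

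For part (1), the approach is to recognize this as the ordinary Frobenius reciprocity, namely the adjunction
\[
r_P \dashv i_P^{G'}.
\]
The classical version, for unnormalized induction and unnormalized Jacquet functor, is proved in Bernstein--Zelevinsky \cite{BZ77} using the fact that the unnormalized Jacquet functor is left adjoint to (unnormalized) parabolic induction, which in turn reduces to the standard Frobenius reciprocity for $\mathrm{Ind}_P^{G'}$ applied to the trivial extension of a representation of $M$ through $N$. Both the Jacquet functor and the induction in the statement are normalized by the same twist $\delta_P^{1/2}$, so these twists cancel and one recovers the isomorphism as stated. First, I would cite this result; no calculation is needed beyond bookkeeping on the normalization.

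For part (2), the approach is to appeal to Bernstein's second adjointness theorem, which asserts that for the opposite parabolic $\overline{P}$ one has the adjunction
\[
i_P^{G'} \dashv r_{\overline{P}}.
\]
This is a substantially deeper result than (1): it relies on Bernstein's theory of the Bernstein center and the second adjoint functor theorem for reductive $p$-adic groups (see Bernstein's Harvard notes, or the exposition in Renard's book). First I would recall this adjunction, then note that with the normalization conventions in the statement the isomorphism follows directly.

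The main obstacle is really just a bookkeeping one, not a conceptual one: making sure that the normalization conventions for $i_P^{G'}$ and for $r_P$, $r_{\overline{P}}$ adopted in the earlier discussion match those in the references being cited, so that the factors of $\delta_P^{1/2}$ (and $\delta_{\overline{P}}^{1/2}$) cancel correctly on both sides of each isomorphism. Since the paper states both claims without additional hypotheses on $P$ or $\pi'$, I would be explicit that no admissibility or finite-length condition beyond what is already assumed on $\pi$ and $\pi'$ is needed — the adjunctions hold at the level of smooth representations. With these conventions fixed, each statement is an immediate application of the corresponding adjunction.
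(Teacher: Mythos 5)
Your proposal is correct and matches the paper's treatment: the paper states this as a recalled standard fact with no proof, and the two parts are indeed the classical Frobenius reciprocity adjunction $r_P \dashv i_P^{G'}$ (Bernstein--Zelevinsky) and Bernstein's second adjointness $i_P^{G'} \dashv r_{\overline{P}}$, with the normalized conventions making the $\delta_P^{1/2}$ factors cancel. Nothing further is needed.
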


Let $l$ be a positive integer. Similarly, we fix a Borel subgroup of $\GL_{l}(F)$ and let $P'$ be a standard parabolic subgroup. Then $P'=M'N'$ where $M'$ is its Levi subgroup and $N'$ is its unipotent radical. Furthermore, we have $M'\cong\GL_{n_1}(F)\times\GL_{n_2}(F)\times\cdots\times\GL_{n_r}(F)$. Let $\tau_i$ be an irreducible admissible representation of $\GL_{n_i}(F)$ for $i=1,\dots,r$. We let
\[
\tau_1\times\tau_2\times\cdots\times\tau_r
\]
denote the normalized parabolic induction from $P'$ to $\GL_{l}(F).$

Let $\rho$ be an irreducible unitary supercuspidal representation of $\GL_l(F)$ and $x,y\in\mathbb{R}$ such that $y-x\in\mathbb{Z}_{\geq 0}.$ The segment $[x,y]_\rho$ is defined to be the tuple $(\rho|\cdot|^x,\rho|\cdot|^{x+1},\dots,\rho|\cdot|^y).$
The representation
\[
\rho|\cdot|^x\times\rho|\cdot|^{x+1}\times\dots\times\rho|\cdot|^y
\]
has a unique irreducible which we denote by $\Delta_\rho[x,y].$ 

By the Langlands classification for $\GL_l(F)$,  any irreducible representation $\tau$ of $\GL_l(F)$ can be realized as a unique irreducible subrepresentation of a parabolic induction of the form
\[\Delta_{\rho_1}[x_1,y_1]\times\cdots\times\Delta_{\rho_r}[x_r,y_r],\]
where each $\rho_i$ is an irreducible unitary supercuspidal representation of $\GL_{n_i}(F),$ $[x_i,y_i]_{\rho_i}$ is a segment, and $x_1+y_1\leq\cdots\leq x_r+y_r.$ In this setting, we write
$$
\tau=L(\Delta_{\rho_1}[x_1,y_1],\dots,\Delta_{\rho_r}[x_r,y_r]).
$$

We need a specific class of representations known as multisegments. Let $(x_{i,j})_{1\leq i\leq s, 1\leq j \leq t}$ be real numbers such that $x_{i,j}=x_{1,1}-i+j.$ We define a multisegment representation $\sigma$ to be the irreducible representation given by
$$
\sigma=\begin{pmatrix}
x_{1,1} & \cdots & x_{1,t} \\
\vdots & \ddots & \vdots \\
x_{s,1} & \cdots & x_{s,t}
\end{pmatrix}_{\rho}:=L(\Delta_{\rho}[x_{1,1},x_{s,1}],\dots,\Delta_{\rho}[x_{1,t}, x_{s,t}]).
$$
The collection of segments $\Delta_{\rho}[x_{1,1},x_{s,1}],\dots,\Delta_{\rho}[x_{1,t}, x_{s,t}]$ is called a multisegment.
\subsection{Theta Correspondence}

Fix an additive character $\psi_F$ on $F.$ The pair $(G,H)$ is a reductive dual pair of a certain metaplectic group. Hence we consider the Weil representation $\omega_{W_n, V_m, \psi_F}$ of $G\times H.$ For $\pi\in\Pi(G)$, the maximal $\pi$-isotypic quotient of this Weil representation is given by
$$
\pi\boxtimes \Theta_{W_n,V_m,\psi_F}(\pi),
$$
where $\Theta_{W_n,V_m,\psi_F}(\pi)$ is a smooth representation of $H$ called the big theta lift of $\pi.$ We let $\theta_{W_n,V_m,\psi_F}(\pi)$, called the (little) theta lift of $\pi,$ be the maximal semi-simple quotient of $\Theta_{W_n,V_m,\psi_F}(\pi)$. Originally conjectured by Howe (\cite{How79}), the following theorem was first proven by Waldspurger (\cite{Wal90}) when the residual characteristic of $F$ is not 2 and then in full generality by Gan and Takeda (\cite{GT16}) and  Gan and Sun (\cite{GS17}).

\begin{thm}[Howe Duality] Let $\pi_1,\pi_2\in\Pi(G_n).$
\begin{enumerate}
    \item If $\theta_{W_n,V_m,\psi_F}(\pi_2)\neq 0$, then $\theta_{W_n,V_m,\psi_F}(\pi_2)$ is irreducible.
    \item If $\pi_1\not\cong\pi_2$ and both $\theta_{W_n,V_m,\psi_F}(\pi_1)$ and $\theta_{W_n,V_m,\psi_F}(\pi_2)$ are nonzero, then $\theta_{W_n,V_m,\psi_F}(\pi_1)\not\cong \theta_{W_n,V_m,\psi_F}(\pi_2).$
\end{enumerate}
\end{thm}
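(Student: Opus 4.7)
The plan is to follow the strategy of Gan--Takeda \cite{GT16}, which reduces Howe Duality to two claims: (i) for each $\pi \in \Pi(G_n)$ with $\Theta_{W_n,V_m,\psi_F}(\pi) \neq 0$, the big theta lift has a unique irreducible quotient; and (ii) if $\theta_{W_n,V_m,\psi_F}(\pi_1) \cong \theta_{W_n,V_m,\psi_F}(\pi_2) \neq 0$, then $\pi_1 \cong \pi_2$. Claim (i), combined with the fact that $\theta_{W_n,V_m,\psi_F}(\pi)$ is by definition the maximal semi-simple quotient of the big theta lift, gives part (1) of the theorem. Claim (ii) is precisely part (2). Throughout, I would use Frobenius reciprocity (Lemma \ref{lemma Frobenius rec}) freely and induct on $\min(n,m)$.

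For claim (i), the starting point is Kudla's filtration of $r_P(\omega_{W_n,V_m,\psi_F})$, where $P = M_k N$ is a maximal parabolic of $H$ stabilizing an isotropic $k$-plane with Levi $M_k \cong \GL_k(F) \times \OO(V_{m-2k})$. This filtration expresses the Jacquet module of the Weil representation as a successive extension whose graded pieces involve Weil representations for dual pairs of strictly smaller rank, twisted by representations of $\GL_k(F)$. Passing to $\pi$-coinvariants gives a filtration of $r_P(\Theta_{W_n,V_m,\psi_F}(\pi))$ whose pieces are controlled inductively. Any irreducible quotient $\sigma$ of $\Theta_{W_n,V_m,\psi_F}(\pi)$ has, by Lemma \ref{lemma Frobenius rec}(1), a nontrivial $r_P$-Jacquet module embedded in this filtration, and a length/multiplicity count as in \cite{GT16} forces uniqueness of the cosocle.

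For claim (ii), suppose $\theta_{W_n,V_m,\psi_F}(\pi_1) \cong \theta_{W_n,V_m,\psi_F}(\pi_2) = \sigma$. Then $\sigma$ is a common quotient of $\Theta_{W_n,V_m,\psi_F}(\pi_1)$ and $\Theta_{W_n,V_m,\psi_F}(\pi_2)$, producing for $i=1,2$ a nonzero element of $\mathrm{Hom}_{G_n \times H_m}(\omega_{W_n,V_m,\psi_F},\, \pi_i \boxtimes \sigma)$. Applying the see-saw identity to the dual pair $(G_n \times G_n,\, H_m)$ inside $(\Sp(W_n \oplus W_n),\, \OO(V_m))$ and invoking the MVW involution (which intertwines the theta correspondence with passage to contragredient) produces a nonzero $G_n$-invariant bilinear pairing between $\pi_1^\vee$ and $\pi_2$. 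Since both are irreducible, Schur's lemma forces $\pi_1 \cong \pi_2$.

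The main obstacle is controlling the filtration in claim (i): the inductive hypothesis delivers an irreducible cosocle for each theta lift of smaller rank, but assembling these into a single irreducible cosocle for $\Theta_{W_n,V_m,\psi_F}(\pi)$ requires careful tracking of which constituents can appear across different graded pieces, together with vanishing statements for certain $\mathrm{Ext}$-groups that rule out unwanted extensions. This is particularly delicate in residue characteristic $2$, which is precisely where the Gan--Sun refinement \cite{GS17} is needed to complete the argument; a secondary subtlety is that the see-saw argument for (ii) relies on (i) for smaller groups, so the two claims must be proved simultaneously in the induction.
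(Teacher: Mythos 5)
The paper does not prove this theorem: it is imported from the literature, attributed to Howe's conjecture \cite{How79}, Waldspurger's proof for odd residual characteristic \cite{Wal90}, and the general case by Gan--Takeda \cite{GT16} and Gan--Sun \cite{GS17}. So there is no internal argument to compare yours against; the only meaningful question is whether your sketch would stand on its own, and it does not. What you wrote is a reasonable roadmap of the Gan--Takeda strategy (Kudla's filtration to control the cosocle of $\Theta_{W_n,V_m,\psi_F}(\pi)$, a see-saw argument plus the MVW involution for injectivity), but every point where the theorem is actually hard is delegated rather than argued: the ``length/multiplicity count as in \cite{GT16}'' that is supposed to force irreducibility of the cosocle, the $\mathrm{Ext}$-vanishing and the tracking of constituents across the graded pieces of the filtration after taking $\pi$-coinvariants, and the simultaneous induction needed to make the see-saw step legitimate are precisely the content of the cited papers, and you flag them yourself as the main obstacles instead of supplying them. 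As a blind proof attempt this is therefore a genuine gap; as a summary of the proof the paper is citing, it is broadly faithful in shape.

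Two smaller points to correct if you were to carry this out. First, part (1) needs the cosocle of $\Theta_{W_n,V_m,\psi_F}(\pi)$ to be irreducible, i.e.\ a unique irreducible quotient occurring with multiplicity one; ``unique irreducible quotient'' up to isomorphism is not by itself enough, since $\theta_{W_n,V_m,\psi_F}(\pi)$ is defined as the maximal semisimple quotient. Second, the dual-pair bookkeeping in your see-saw is off: the relevant see-saw places $G_n\times G_n$ inside the symplectic group of the doubled space (the second copy of $W_n$ carrying the negated form) against the diagonal $H_m$ inside $H_m\times H_m$; and for the symplectic--even orthogonal pairs relevant here the residue characteristic $2$ case is already handled by \cite{GT16}, while \cite{GS17} treats the remaining (quaternionic) dual pairs rather than being a characteristic-$2$ refinement.
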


Oftentimes, it is useful to study theta lifts in towers. Let $\mathcal{V}^+$ and $\mathcal{V}^-$ be as in Equations \eqref{eqn tower+} and \eqref{eqn tower-}.
Write $V^+_r=V_{(d,c)}+V_{r,r}\in\mathcal{V}^+$ and $V^-_r=V_{(d,c')}+V_{r,r}\in\mathcal{V}^-.$ Note that $\mathrm{dim}(V_r^+)=\mathrm{dim}(V_r^+)=2(1+r).$ We define the first occurrence of $\pi$ in $\mathcal{V}^+$ to be the integer
$$
m^+(\pi)=\mathrm{min}\{2(1+r) \ | \ \theta_{W_n,V_r^+,\psi_F}(\pi)\neq 0\}
$$
and similarly we define the first occurrence of $\pi$ in $\mathcal{V}^-$ to be the integer
$$
m^-(\pi)=\mathrm{min}\{2(1+r) \ | \ \theta_{W_n,V_r^-,\psi_F}(\pi)\neq 0\}.$$
The Witt towers $\mathcal{V}^+$ and $\mathcal{V}^-$ satisfy the following conservation relation.

\begin{thm}[Conservation relation, \cite{SZ15}]\label{thm conservation relation}
    Let $\pi\in\Pi(G_n).$ Then
    $$
    m^+(\pi)+m^-(\pi)=2n+4.
    $$
\end{thm}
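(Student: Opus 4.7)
The plan is to prove the equality through two matching inequalities. First, by the Rallis tower property, once $\theta_{W_n,V_r^\pm,\psi_F}(\pi)$ is nonzero it remains nonzero for all larger $r$ along the tower, so $m^\pm(\pi)$ genuinely records a single threshold. I would then split the task into proving $m^+(\pi)+m^-(\pi)\geq 2n+4$ and $m^+(\pi)+m^-(\pi)\leq 2n+4$ separately.

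For the lower bound $m^+(\pi)+m^-(\pi)\geq 2n+4$, I would use a see-saw identity for the dual pair $(G_n,\, O(V_r^+\oplus V_s^-))$ and its partner $(G_n\times G_n,\, O(V_r^+)\times O(V_s^-))$. Picking $r$ and $s$ just below the thresholds $m^+(\pi)/2-1$ and $m^-(\pi)/2-1$ makes both component theta lifts vanish, killing one side of the identity; the other side is then analyzed through the Kudla filtration of the degenerate principal series $O(V_r^+\oplus V_s^-)$, whose non-triviality at the relevant reducibility point (detected by the doubling zeta integral paired against $\pi\boxtimes\pi^\vee$) forces $r+s\geq n$ and hence the stated lower bound.

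For the upper bound $m^+(\pi)+m^-(\pi)\leq 2n+4$, I would follow a ``conservation module'' construction. Starting from nonvanishing of $\theta_{W_n,V_{r_0}^+,\psi_F}(\pi)$ at $r_0=m^+(\pi)/2-1$, I would apply the MVW involution to swap roles with the contragredient and then unfold a Jacquet module of the Weil representation along an appropriate mirabolic, producing a nonzero $G_n$-equivariant map from $\pi$ into a parabolically induced representation on the complementary space $V^-_{n-r_0}\in\mathcal{V}^-$ whose inducing data can be described explicitly in terms of $\pi$. Frobenius reciprocity (Lemma~\ref{lemma Frobenius rec}) together with Howe duality then supplies a nonzero theta lift on $\mathcal{V}^-$ at dimension $2n+4-m^+(\pi)$, delivering the upper bound.

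The main obstacle is the upper bound: there is no naive symmetry exchanging the two Witt towers because the anisotropic kernels $V_{(d,c)}$ and $V_{(d,c')}$ are genuinely inequivalent, so nonvanishing on $\mathcal{V}^+$ cannot be directly dualized to nonvanishing on $\mathcal{V}^-$. Carrying out the conservation-module construction requires a precise quantitative match between the vanishing order of $\Theta_{W_n,V_{r_0}^+,\psi_F}(\pi)$ on one tower and the emergence of a nonzero quotient on the other, and the cleanest route to this match appears to be a delicate induction on $n$ combined with careful bookkeeping of the supercuspidal support of $\pi$ via the Bernstein decomposition.
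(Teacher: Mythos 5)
This statement is not proved in the paper at all: it is quoted as an external result, with the proof attributed to Sun and Zhu \cite{SZ15} (building on Kudla--Rallis), so the expected treatment here is a citation, not an argument. Your proposal is therefore an attempt to reprove a genuinely deep theorem, and as it stands it is an outline of the known strategy rather than a proof. For the lower bound, the see-saw pairing and the Kudla filtration are indeed the right tools, but the actual content lies in the structure theory of the degenerate principal series on the doubled symplectic group: one needs the precise submodule structure at the relevant point $s_0$ (the relationship between the submodules $R(V^+)$ and $R(V^-)$ attached to the two towers), together with the MVW involution to identify first occurrences of $\pi$ and $\pi^\vee$, in order to extract the exact bound $2n+4$ rather than $2n+2$ or $2n+6$. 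The phrase ``whose non-triviality at the relevant reducibility point forces $r+s\geq n$'' is precisely the step that carries all the work, and nothing in the proposal supplies it.

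The upper bound is where the sketch would fail outright. Kudla's filtration of the Jacquet modules of the Weil representation governs lifts within a single Witt tower (it underlies the tower property and the compatibility of theta with parabolic induction); it does not provide any mechanism, mirabolic unfolding or otherwise, that converts nonvanishing of $\theta_{W_n,V^+_{r_0},\psi_F}(\pi)$ at the first occurrence $m^+(\pi)$ into a nonzero quotient on the other tower $\mathcal{V}^-$ at the complementary dimension $2n+4-m^+(\pi)$. That transfer between the two towers, whose anisotropic kernels are inequivalent, is exactly the hard content of the conservation relation; you correctly identify it as the main obstacle, but the proposed remedy (a ``conservation module'' produced by unfolding along a mirabolic, plus induction on $n$ and Bernstein-decomposition bookkeeping) is not a construction that is known to exist, and no argument is given that it does. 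Since the paper's role for this theorem is simply to invoke \cite{SZ15}, the correct course is to cite that work rather than attempt a self-contained proof.
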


We assume that $c$ and $c'$ are chosen such that $m^+(\pi)> n+2$ and $m^-(\pi)< n+2.$ Hence we call $\mathcal{V}^+$ the ``going up'' tower and $\mathcal{V}^-$ the ``going down'' tower. 
We remark that it is possible that $m^+(\pi)= n+2=m^-(\pi).$ However, the Adams conjecture holds in full generality in this setting. See Remark \ref{rmk Adams up=down}.

Later, we consider $\alpha=m-n-1$ and write $\theta_{W_n,V_m^\pm,\psi_F}(\pi)=\theta^\pm_{-\alpha}(\pi).$ In this setting, we let $m^{\pm,\alpha}(\pi)=m^\pm(\pi)-n-1.$ When it is clear in context, we suppress the $\pm.$

\subsection{Local Arthur packets for symplectic groups}\label{sec Local Arthur packets for symplectic groups} Recall $G_n=\Sp(W_n)$.  We begin by discussing the theory of local Arthur packets in this case. Note that $\widehat{G}_n(\BC)=\SO_{n+1}(\BC).$
A local Arthur parameter is a direct sum of irreducible representations
$$\psi: W_F \times \SL_2(\mathbb{C}) \times \SL_2(\mathbb{C}) \rightarrow \widehat{G}_n(\BC)$$
\begin{equation}\label{eq decomp psi +}
  \psi = \bigoplus_{i=1}^r \phi_i|\cdot|^{x_i} \otimes S_{a_i} \otimes S_{b_i},  
\end{equation}
satisfying the following conditions:
\begin{enumerate}
    \item [(1)]$\phi_i(W_F)$ is bounded and consists of semi-simple elements, and $\dim(\phi_i)=d_i$;
    \item [(2)] $x_i \in \R$ and $|x_i|<\frac{1}{2}$;
    \item [(3)]the restrictions of $\psi$ to the two copies of $\SL_2(\mathbb{C})$ are analytic, $S_k$ is the $k$-dimensional irreducible representation of $\SL_2(\mathbb{C})$, and 
    $$\sum_{i=1}^r d_ia_ib_i = n+1.
$$ 
\end{enumerate}

Two local Arthur parameters are equivalent if they are conjugate under $\widehat{G}_n(\BC)$. We do not distinguish $\psi$ and its equivalence class in the rest of the paper. We let $\Psi^{+}(G_n)$ denote the equivalence class of local Arthur parameters, and $\Psi(G_n)$ be the subset of $\Psi^+(G_n)$ consisting of local Arthur parameters $\psi$ whose restriction to $W_F$ is bounded. In other words, $\psi$ is in $\Psi(G_n)$ if and only if $x_i=0$ for $i=1,\dots, r$ in the decomposition \eqref{eq decomp psi +}.

By the Local Langlands Correspondence for $\GL_{d_i}(F)$, the bounded representation $\phi_i$ of $W_F$ can be identified with an irreducible unitary supercuspidal representation $\rho_i$ of $\GL_{d_i}(F)$ (\cite{HT01, Hen00, Sch13}). Consequently, we may write
\begin{equation}\label{A-param decomp}
  \psi = \bigoplus_{\rho}\left(\bigoplus_{i\in I_\rho} \rho|\cdot|^{x_i} \otimes S_{a_i} \otimes S_{b_i}\right),  
\end{equation}
where first sum runs over 
irreducible unitary supercuspidal representations $\rho$ of $\GL_d(F)$, $d \in \mathbb{Z}_{\geq 1}$.

Let $\psi$ be a local Arthur parameter as in \eqref{A-param decomp}, we say that $\psi$ is of \emph{good parity} if $\psi \in \Psi(G_n)$, i.e., $x_i=0$ for all $i$, and every summand $\rho \otimes S_{a_i} \otimes S_{b_i}$ is self-dual and symplectic.
We let $\Psi_{gp}(G_n)$ denote the subset of $\Psi(G_n)$ consisting of local Arthur parameters of good parity.

Let $\psi \in \Psi^{+}(G_n).$ From the decomposition \eqref{A-param decomp}, we define a subrepresentation $\psi_{nu,>0}$ of $\psi$ by
\begin{equation}\label{eqn psi_nu,>0}
    \psi_{nu,>0}:= \bigoplus_{\rho}\left(\bigoplus_{\substack{i\in I_\rho,\\ x_i>0}} \rho|\cdot|^{x_i} \otimes S_{a_i} \otimes S_{b_i}\right).
\end{equation}
Since the image of $\psi$ is contained in  $\widehat{G}_n(\BC)$, $\psi$ is self-dual, and hence $\psi$ also contains $(\psi_{nu,>0})^{\vee}$. We define $\psi_{u} \in \Psi(G_{n'})$ for some $n'\leq n$ by 
\begin{align}\label{eq def of psi_u}
    \psi= \psi_{nu,>0} \oplus \psi_u \oplus (\psi_{nu,>0})^{\vee}.
\end{align}
Equivalently,
\[ \psi_{u}:= \bigoplus_{\rho}\left(\bigoplus_{\substack{i\in I_\rho,\\ x_i=0}} \rho \otimes S_{a_i} \otimes S_{b_i}\right). \]

In \cite{Art13}, for a local Arthur parameter $\psi \in \Psi(G_n)$, Arthur constructed a finite multi-set $\Pi_\psi$ consisting of irreducible unitary representations of $G_n.$ We call $\Pi_\psi$ the \emph{local Arthur packet} of $\psi.$ M{\oe}glin showed that $\Pi_\psi$ is multiplicity-free (\cite{Moe11a}). For $\psi \in \Psi^+(G_n)$, Arthur defined (\cite[(1.5.1)]{Art13})
\begin{align}\label{eq def packet +}
    \Pi_{\psi}:= \{ \tau_{\psi_{nu,>0}} \rtimes \pi_u \ | \ \pi_{u} \in \Pi_{\psi_u}   \},
\end{align}
where $\tau_{\psi_{nu,>0}}$ is the following irreducible representation of a general linear group defined over $F$
\begin{equation}\label{eqn tau_{psi_{nu,>0}}}
    \tau_{\psi_{nu,>0}}=\bigtimes_\rho\bigtimes_{i\in I_\rho}\begin{pmatrix}
\frac{a_i-b_i}{2}+x_i & \cdots & \frac{a_i+b_i}{2}-1+x_i \\
\vdots & \ddots & \vdots \\
\frac{-a_i-b_i}{2}+1+x_i & \cdots & \frac{b_i-a_i}{2}+x_i
\end{pmatrix}_{\rho}.
\end{equation}
Since $|x_i|<\frac{1}{2}$ in the decomposition \eqref{A-param decomp}, the parabolic induction in \eqref{eq def packet +} is always irreducible (\cite[Proposition 5.1]{Moe11b}, see Theorem \ref{thm red from nu to gp symp} below). We say that an irreducible representation $\pi$ of $G_n$ is \emph{of Arthur type} if $\pi\in\Pi_\psi$ for some local Arthur parameter $\psi \in \Psi^+(G_n)$.

We further decompose $\psi_{u}$. Suppose $\rho \otimes S_a \otimes S_b$ is an irreducible summand of $\psi_u$ that is either not self-dual, or self-dual but not of the same type as $\psi$. Then $\psi$ must contain the other summand $(\rho\otimes S_a\otimes S_b)^{\vee}=\rho^{\vee} \otimes S_a \otimes S_b$. Therefore, we may choose a subrepresentation $\psi_{np}$ of $\psi_u$ such that
\begin{align}\label{eq decomp of psi_u}
     \psi_{u}= \psi_{np} \oplus \psi_{gp} \oplus \psi_{np}^{\vee},
\end{align}
where $\psi_{gp} $ is of good parity and any irreducible summand of $\psi_{np}$ is either not self-dual or self-dual but not of the same type as $\psi$. In \cite{Moe06a}, M{\oe}glin constructed the local Arthur packet $\Pi_{\psi_u}$ from $\Pi_{\psi_{gp}}$, which we record below.

\begin{thm}[{\cite[Theorem 6]{Moe06a}, \cite[Proposition 8.11]{Xu17}}]\label{thm reduction to gp}
Let $\psi_u \in \Psi(G_n)$ with a choice of decomposition \eqref{eq decomp of psi_u}. Write
\[ \psi_{np}= \bigoplus_{\rho} \left( \bigoplus_{i \in I_{\rho}} \rho \otimes S_{a_i} \otimes S_{b_i}\right),  \]
and consider the following irreducible parabolic induction
\begin{equation}\label{eqn tau_{psi_{np}}}
    \tau_{\psi_{np}}=\bigtimes_\rho\bigtimes_{i\in I_\rho}\begin{pmatrix}
\frac{a_i-b_i}{2} & \cdots & \frac{a_i+b_i}{2}-1 \\
\vdots & \ddots & \vdots \\
\frac{-a_i-b_i}{2}+1 & \cdots & \frac{b_i-a_i}{2}
\end{pmatrix}_{\rho}.
\end{equation}
Then for any $\pi_{gp}\in\Pi_{\psi_{gp}}$ the induced representation $\tau_{\psi_{np}}\rtimes\pi_{gp}$ is irreducible, independent of choice of $\psi_{np}$. Moreover,
$$
\Pi_\psi=\{\tau_{\psi_{np}}\rtimes\pi_{gp} \, | \, \pi\in\Pi_{\psi_{gp}}\}.
$$
\end{thm}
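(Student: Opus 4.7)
The plan is to prove the theorem in two steps: first, establish that $\tau_{\psi_{np}}\rtimes\pi_{gp}$ is irreducible and independent of the choice of $\psi_{np}$ in the decomposition \eqref{eq decomp of psi_u}, and second, identify the set of all such inductions with $\Pi_{\psi_u}$. For the irreducibility, I would decompose $\tau_{\psi_{np}}$ as a product of Speh-type multisegment representations, one for each summand $\rho\otimes S_{a_i}\otimes S_{b_i}$ of $\psi_{np}$, and parabolically induce them onto $\pi_{gp}$ one factor at a time. By hypothesis each such $\rho$ is either non-self-dual or self-dual of the wrong type for $G_n$; in either case a standard Plancherel-measure calculation shows that the rank-one reducibility point of $\rho|\cdot|^s\rtimes\sigma$ does not lie at any of the half-integer exponents carried by the Speh factor, so the induction remains irreducible. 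Iterating the argument and using that the factors commute under parabolic induction up to isomorphism yields irreducibility of $\tau_{\psi_{np}}\rtimes\pi_{gp}$. The independence from the choice of $\psi_{np}$ versus $\psi_{np}^{\vee}$ then reduces to the classical isomorphism $\tau\rtimes\sigma\cong\tau^{\vee}\rtimes\sigma$, which comes from the outer automorphism of the Siegel Levi exchanging the two $\GL$ factors.

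For the identification with $\Pi_{\psi_u}$, I would induct on the number of irreducible summands of $\psi_{np}$. The base case $\psi_{np}=0$ is vacuous. For the inductive step, strip off a pair $(\rho\otimes S_a\otimes S_b)\oplus(\rho\otimes S_a\otimes S_b)^{\vee}$ from $\psi_u$ to obtain a smaller local Arthur parameter $\psi'_u$, and appeal to the compatibility of Arthur's endoscopic character identities with parabolic induction: the stable distribution attached to $\Pi_{\psi_u}$ should be obtained from that attached to $\Pi_{\psi'_u}$ by parabolic induction along the corresponding Speh factor. Combining this inductive compatibility with Moeglin's multiplicity-freeness of $\Pi_{\psi_{gp}}$ and the linear independence of characters of irreducibles forces the claimed set equality, and by induction also gives the desired description of $\Pi_{\psi_u}$.

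The main obstacle is the inductive step just sketched: one must reconcile Arthur's implicit definition of $\Pi_{\psi_u}$ through stable distributions and endoscopic transfer with the explicit parabolic-induction recipe, and verify that no constituents are missed or spuriously added. Carrying out this comparison requires precise control over the normalization of the intertwining operators governing the induced representations in the non-good-parity range, which is the technical heart of \cite{Moe06a} and the refinement in \cite{Xu17}.
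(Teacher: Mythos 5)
The paper does not actually prove this statement: it is imported as a known result of M{\oe}glin \cite[Theorem 6]{Moe06a} and Xu \cite[Proposition 8.11]{Xu17}, so the only meaningful comparison is with the proofs in those references. Your outline does capture their overall architecture --- first irreducibility of the induction from the non-good-parity part, then identification of the packet via compatibility of the endoscopic character identities with parabolic induction --- but both key steps are asserted in a form that either does not suffice or simply restates the theorem.

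On irreducibility: a rank-one Plancherel-measure computation controls the reducibility of $\rho|\cdot|^s\rtimes\sigma$ when $\sigma$ is supercuspidal (or, with more work, tempered), whereas $\pi_{gp}$ here is a general non-tempered representation of Arthur type; avoiding rank-one reducibility exponents does not by itself yield irreducibility of $\tau_{\psi_{np}}\rtimes\pi_{gp}$ for a Speh-type $\tau_{\psi_{np}}$. The actual arguments use that when $\rho$ is not self-dual its cuspidal line is disjoint from the cuspidal support of $\pi_{gp}$, and when $\rho$ is self-dual of the wrong type a parity comparison of exponents, combined with Jacquet-module and normalized intertwining-operator arguments --- this is exactly the content supplied in \cite{Xu17} via Arthur's local intertwining relation, not a routine rank-one check. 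Also, the isomorphism $\tau\rtimes\sigma\cong\tau^{\vee}\rtimes\sigma$ is not given by an outer automorphism of a Siegel Levi: the relevant parabolic has Levi $\GL_k(F)\times G''$, and the statement comes from conjugating this maximal parabolic to its associate (equivalently from the MVW involution), which gives equality of composition series and hence the isomorphism only once irreducibility is known. More seriously, in the second step the assertion that the stable distribution attached to $\Pi_{\psi_u}$ ``should be obtained'' from that of the smaller parameter by parabolic induction is precisely the nontrivial content of the cited results: Arthur's construction determines the packet of a non-elliptic parameter through such identities only after one proves that the induced characters are the correct stable/endoscopic data and that the inductions are irreducible, and making this explicit is M{\oe}glin's theorem. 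You correctly flag this as the main obstacle, but that concession means the proposal is a roadmap to \cite{Moe06a} and \cite{Xu17} rather than a proof of the statement.
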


Combined with \eqref{eq def packet +}, we obtain the following.

\begin{thm}[{\cite[Proposition 5.1]{Moe11b}}]\label{thm red from nu to gp symp}
Let $\psi\in\Psi^+(G_n)$ with decomposition 
\begin{equation}\label{eqn psi decomp general}
\psi=\psi_{nu,>0}+\psi_{np}+\psi_{gp}+\psi_{np}^\vee+\psi_{nu,>0}^\vee
\end{equation} 
as above. Then, for any $\pi_{gp}\in\Pi_{\psi_{gp}},$ the induction $\tau_{\psi_{nu,>0}}\times\tau_{\psi_{np}}\rtimes\pi_{gp}$ is irreducible. As a consequence, \begin{equation}\label{non-unitary A-packet}
    \Pi_\psi=\{\tau_{\psi_{nu,>0}}\times\tau_{\psi_{np}}\rtimes\pi_{gp} , | , \pi_{gp}\in\Pi_{\psi_{gp}}\}.
\end{equation}
\end{thm}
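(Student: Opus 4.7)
The plan is to assemble Theorem \ref{thm red from nu to gp symp} by concatenating the two reduction results stated just before it, with induction in stages providing the glue. Specifically, Equation \eqref{eq def packet +} reduces $\Pi_\psi$ to $\Pi_{\psi_u}$ via parabolic induction along $\tau_{\psi_{nu,>0}}$, and Theorem \ref{thm reduction to gp} further reduces $\Pi_{\psi_u}$ to $\Pi_{\psi_{gp}}$ via parabolic induction along $\tau_{\psi_{np}}$. The goal is to compose these two steps.

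First I would fix $\pi_{gp} \in \Pi_{\psi_{gp}}$ and apply Theorem \ref{thm reduction to gp} to produce the irreducible representation $\pi_u := \tau_{\psi_{np}} \rtimes \pi_{gp} \in \Pi_{\psi_u}$, noting the independence from the choice of decomposition \eqref{eq decomp of psi_u}. Next, feeding $\pi_u$ into \eqref{eq def packet +}, I obtain $\tau_{\psi_{nu,>0}} \rtimes \pi_u \in \Pi_\psi$, and by the remark following \eqref{eq def packet +} (invoking \cite[Proposition 5.1]{Moe11b}) this induction is already known to be irreducible because $|x_i|<\tfrac{1}{2}$ for every non-unitary summand. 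Induction in stages then identifies
\[
\tau_{\psi_{nu,>0}} \rtimes \pi_u \;=\; \tau_{\psi_{nu,>0}} \rtimes \bigl(\tau_{\psi_{np}} \rtimes \pi_{gp}\bigr) \;=\; \tau_{\psi_{nu,>0}} \times \tau_{\psi_{np}} \rtimes \pi_{gp},
\]
so this combined parabolic induction is irreducible.

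To obtain the equality \eqref{non-unitary A-packet}, I would run the above assignment $\pi_{gp} \mapsto \tau_{\psi_{nu,>0}} \times \tau_{\psi_{np}} \rtimes \pi_{gp}$ and observe that it is surjective onto $\Pi_\psi$: by \eqref{eq def packet +} every element of $\Pi_\psi$ has the form $\tau_{\psi_{nu,>0}} \rtimes \pi_u$ for some $\pi_u \in \Pi_{\psi_u}$, and by Theorem \ref{thm reduction to gp} each such $\pi_u$ is $\tau_{\psi_{np}} \rtimes \pi_{gp}$ for some $\pi_{gp} \in \Pi_{\psi_{gp}}$. The independence from the choice of $\psi_{np}$ in the decomposition \eqref{eq decomp of psi_u} is inherited directly from Theorem \ref{thm reduction to gp}.

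The proof is essentially a bookkeeping argument, so I do not expect a substantive obstacle beyond careful invocation of the cited irreducibility results. The only delicate point is making sure that induction in stages is being applied correctly, i.e., that the Levi subgroup supporting $\tau_{\psi_{nu,>0}} \otimes \tau_{\psi_{np}}$ really is obtained by composing the two parabolics used in \eqref{eq def packet +} and Theorem \ref{thm reduction to gp}; this is a standard check. The genuinely hard input, namely the irreducibility of $\tau_{\psi_{nu,>0}} \rtimes \pi_u$ which hinges on the bound $|x_i|<\tfrac{1}{2}$, is supplied entirely by \cite[Proposition 5.1]{Moe11b} and is not something this proof needs to reprove.
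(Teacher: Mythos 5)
Your proposal is correct and matches the paper's own derivation: the paper states this theorem exactly as the combination of Arthur's definition \eqref{eq def packet +} with Theorem \ref{thm reduction to gp} (the sentence ``Combined with \eqref{eq def packet +}, we obtain the following''), with the key irreducibility of the induction along $\tau_{\psi_{nu,>0}}$ imported from \cite[Proposition 5.1]{Moe11b}, just as you do. Your extra care with induction in stages and surjectivity is only a fleshing-out of that same bookkeeping.
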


\subsection{Local Arthur packets for orthogonal groups} \label{sec Local Arthur packets for orthogonal groups}
Recall $H_m=\OO(V_m)$.  We discuss the theory of local Arthur packets in this case. Note that $\widehat{H}_m(\BC)=O_{m}(\BC).$ Since $H_m$ is quasi-split and not connected, its theory of local Arthur packets requires more finesse.

Let $H_m^\circ$ be the connected component of $H_m.$ That is, $H_m^\circ=\SO(V_m)$ is a quasi-split even special orthogonal group. We have $\widehat{H_m^\circ}=\SO_{m}(\BC).$ A local Arthur parameter for $H_m^\circ$ is a direct sum of irreducible representations
$$\psi: W_F \times \SL_2(\mathbb{C}) \times \SL_2(\mathbb{C}) \rightarrow {}^LH_m^\circ$$
\begin{equation}\label{eq decomp psi + orthog}
  \psi = \bigoplus_{i=1}^r \phi_i|\cdot|^{x_i} \otimes S_{a_i} \otimes S_{b_i},  
\end{equation}
satisfying the following conditions:
\begin{enumerate}
    \item [(1)]$\phi_i(W_F)$ is bounded and consists of semi-simple elements, and $\dim(\phi_i)=d_i$;
    \item [(2)] $x_i \in \R$ and $|x_i|<\frac{1}{2}$;
    \item [(3)]the restrictions of $\psi$ to the two copies of $\SL_2(\mathbb{C})$ are analytic, $S_k$ is the $k$-dimensional irreducible representation of $\SL_2(\mathbb{C})$, and 
    $$\sum_{i=1}^r d_ia_ib_i = m.
$$ 
\end{enumerate}

Two local Arthur parameters are equivalent if they are conjugate under $\widehat{H_m^\circ}(\BC)$ and again we do not distinguish $\psi$ and its equivalence class. We let $\Psi^{+}(H_m^\circ)$ denote the equivalence class of local Arthur parameter, and $\Psi(H_m^\circ)$ be the subset of $\Psi^+(H_m^\circ)$ consisting of local Arthur parameters $\psi$ whose restriction to $W_F$ is bounded.

Fix $c\in H_m \setminus H_m^\circ$ and let $\sigma_0$ be the outer automorphism on $H_m^\circ$ given by conjugation by $c.$ We also set $\Sigma_0$ to be the group generated by $\sigma_0.$ We have $H_m=H_m^\circ\rtimes \Sigma_0.$ Through the dual automorphism $\hat{\sigma}_0,$ we have an action of $\Sigma_0$ on $\Psi^{+}(H_m^\circ).$ 
We let $\Psi^+(H_m)$ and $\Psi(H_m)$ be the sets of $\Sigma_0$-orbits of $\Psi^+(H_m)$ and $\Psi(H_m^\circ),$ respectively. We also let $\Pi^{\Sigma_0}(H_m^\circ)$ denote the set of $\Sigma_0$-orbits of $\Pi(H_m^\circ)$. For $\psi\in\Psi^+(H_m),$ Arthur showed there exists a local Arthur packet $\Pi_\psi(H_m^\circ)$ (satisfying certain twisted endoscopic character identities) which is a finite multi-set consisting of elements of $\Pi^{\Sigma_0}(H_m^\circ)$ (\cite{Art13}). M{\oe}glin gave a construction of $\Pi_\psi(H_m^\circ)$ and showed that it is multiplicity free (\cite{Moe06a, Moe06b, Moe09a, Moe10, Moe11a}). For $\psi\in\Psi^+(H_m)$, we define the local Arthur packet of $H_m$ by $\Pi_\psi(H_m)$ to be the set of all $\pi\in\Pi(H_m)$ such that the restriction of $\pi$ to $H_m^\circ$ has irreducible constituents in $\Pi_\psi(H_m^\circ).$ When there is no ambiguity, we write $\Pi_\psi=\Pi_\psi(H_m).$

Note that $H_m^\circ$ splits over a quadratic extension $E$ of $F$. Let $\Gamma_{E/F}=\mathrm{Gal}(E/F)$. Then $\Gamma_{E/F}\cong\Sigma_0$ and ${}^LH_m^\circ=\widehat{H_m^\circ}(\BC)\rtimes\Gamma_{E/F}= \widehat{H_m}(\BC).$ Let $\xi_m$ be the embedding of the orthogonal group $\widehat{H_m}(\BC)$ into $\GL_m(\BC).$ By composing $\xi_m$ with $\psi\in\Psi^+(H_m)$, we may view $\psi$ as a local Arthur parameter of $\GL_m(F).$ In this way, again by the Local Langlands Correspondence for general linear groups (\cite{HT01, Hen00, Sch13}), we decompose $\psi$ similarly to \eqref{A-param decomp}. 

Let $\psi\in\Psi^+(H_m)$ be decomposed as in \eqref{A-param decomp}. We say that $\psi$ is of \emph{good parity} if $\psi \in \Psi(H_m)$ (i.e. $x_i=0$ for all $i$) and every summand $\rho \otimes S_{a_i} \otimes S_{b_i}$ is self-dual and orthogonal.
We let $\Psi_{gp}(H_m)$ denote the subset of $\Psi(H_m)$ consisting of local Arthur parameters of good parity.

Similarly to the symplectic case, we decompose $\psi\in\Psi^+(H_m)$ as
\begin{equation}\label{eqn psi = nu+np+gp}
    \psi=\psi_{nu,>0}+\psi_{np}+\psi_{gp}+\psi_{np}^\vee+\psi_{nu,>0}^\vee,
\end{equation}
where $\psi_{nu,>0}$ and $\psi_{np}$ are chosen analogously to \eqref{eqn psi_nu,>0} and \eqref{eq decomp of psi_u}, respectively. Similarly to the symplectic case, M{\oe}glin constructed the local Arthur packet $\Pi_{\psi_{np}+\psi_{gp}+\psi_{np}^\vee}$ from $\Pi_{\psi_{gp}}$ (\cite{Moe06a}), i.e., the analogue of Theorem \ref{thm reduction to gp} holds. Furthermore, the analogue of Theorem \ref{thm red from nu to gp symp} also holds as stated below.

\begin{thm}[{\cite[Proposition 5.1]{Moe11b}}]\label{thm red from nu to gp orthog}
Let $\psi\in\Psi^+(H_m)$ with decomposition as in \eqref{eqn psi = nu+np+gp}. Then, for any $\pi_{gp}\in\Pi_{\psi_{gp}},$ the induction $\tau_{\psi_{nu,>0}}\times\tau_{\psi_{np}}\rtimes\pi_{gp}$ is irreducible. Furthermore, \begin{equation}\label{non-unitary A-packet orthog}
    \Pi_\psi=\{\tau_{\psi_{nu,>0}}\times\tau_{\psi_{np}}\rtimes\pi_{gp} , | , \pi_{gp}\in\Pi_{\psi_{gp}}\}.
\end{equation}
Here $\tau_{\psi_{nu,>0}}$ and $\tau_{\psi_{np}}$ are the irreducible representations defined in \eqref{eqn tau_{psi_{nu,>0}}} and \eqref{eqn tau_{psi_{np}}}, respectively.
\end{thm}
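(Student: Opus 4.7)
The plan is to prove both the irreducibility and the packet description by reducing in two stages: first from the non-unitary parameter $\psi$ to its unitary part $\psi_u = \psi_{np} + \psi_{gp} + \psi_{np}^\vee$, and then from $\psi_u$ to its good-parity part $\psi_{gp}$. Both reductions are available in the orthogonal setting, paralleling the symplectic case. This mirrors exactly the two-step structure used to prove Theorem \ref{thm red from nu to gp symp} and will allow me to invoke already-cited results of M{\oe}glin in the quasi-split even orthogonal context.

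For the first reduction, the orthogonal analogue of Arthur's construction \eqref{eq def packet +} should give
$$\Pi_\psi = \{\tau_{\psi_{nu,>0}} \rtimes \pi_u \ | \ \pi_u \in \Pi_{\psi_u}\},$$
where the parabolic induction is always irreducible thanks to the constraint $|x_i| < \tfrac{1}{2}$ on the non-unitary exponents. This is the content of \cite[Proposition 5.1]{Moe11b} in the orthogonal setting; I would verify it by transcribing M{\oe}glin's argument, which rests on analyzing Jacquet modules of $\pi_u$ along parabolics attached to the characters $\rho|\cdot|^{x_i}$ with $x_i > 0$ and then ruling out extraneous intertwiners via the strict bound $|x_i| < \tfrac{1}{2}$.

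For the second reduction, the orthogonal analogue of Theorem \ref{thm reduction to gp} (already noted in the excerpt to hold) gives
$$\Pi_{\psi_u} = \{\tau_{\psi_{np}} \rtimes \pi_{gp} \ | \ \pi_{gp} \in \Pi_{\psi_{gp}}\},$$
again with irreducible induction, independent of the choice of decomposition \eqref{eq decomp of psi_u}. Combining the two reductions by induction in stages yields
$$\tau_{\psi_{nu,>0}} \rtimes (\tau_{\psi_{np}} \rtimes \pi_{gp}) \cong \tau_{\psi_{nu,>0}} \times \tau_{\psi_{np}} \rtimes \pi_{gp},$$
which is therefore irreducible and, as $\pi_{gp}$ ranges over $\Pi_{\psi_{gp}}$, exhausts $\Pi_\psi$ as claimed.

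The main obstacle is ensuring that M{\oe}glin's arguments for connected groups carry over cleanly to quasi-split even orthogonal groups, which are disconnected. This requires tracking the distinction between $H_m$ and its connected component $H_m^\circ$ and working with $\Sigma_0$-orbits in $\Pi(H_m^\circ)$ throughout both reductions. Since parabolic induction commutes with restriction to $H_m^\circ$ up to orbit bookkeeping, the irreducibility argument should survive; the bulk of the verification is checking that the non-unitary and non-good-parity pieces do not interact with the good-parity part or with the $\Sigma_0$-action to produce additional irreducible constituents beyond those already accounted for in $\Pi_{\psi_{gp}}$.
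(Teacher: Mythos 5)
Your two-stage reduction (non-unitary part via the orthogonal analogue of Arthur's definition \eqref{eq def packet +}, then non-good-parity part via the orthogonal analogue of Theorem \ref{thm reduction to gp}, combined by induction in stages with the $\Sigma_0$-orbit bookkeeping) is exactly how the paper assembles this statement; the paper itself gives no independent argument but simply quotes \cite[Proposition 5.1]{Moe11b} after noting that both orthogonal analogues hold, just as the symplectic version is obtained by combining \eqref{eq def packet +} with Theorem \ref{thm reduction to gp}. So your proposal is correct and follows essentially the same route, with the only difference being that you sketch re-deriving M{\oe}glin's irreducibility argument rather than citing it.
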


\subsection{Raising operators}
The results in this subsection hold for both symplectic and split odd special orthogonal groups defined over $F$; however, as we are only concerned with the symplectic case, the results are stated only for $G_n.$

Atobe gave a reformulation of M{\oe}glin's parameterization of local Arthur packets for split symplectic and odd special orthogonal groups (\cite{Ato20b}). Atobe and independently Liu, Lo, and the author used Atobe's reformulation to define certain operators which systematically determine when two local Arthur packets intersect (\cite{Ato23, HLL22}). The operators of \cite{Ato23} and \cite{HLL22} are different, but logically equivalent; however, the operators of \cite{HLL22} have been used to define certain distinguished members (Theorem \ref{thm psi max min} below) in the set
$$
\Psi(\pi):=\{\psi\in\Psi^+(G_n) \ | \ \pi\in\Pi_\psi\}.
$$
We recall how the operators considered in \cite{HLL22} act on the local Arthur parameters.  

\begin{defn}[{\cite[Definition 12.1]{HLL22}}]\label{def operators on parameters}
Suppose $\psi$ is a local Arthur parameter of $G_n$. We decompose $\psi= \psi_{nu,>0}+\psi_{np}+\psi_{gp}+\psi_{np}^\vee+\psi_{nu,>0}^\vee$ as in Theorem \ref{thm red from nu to gp symp} and write
\[ \psi_{gp}=\bigoplus_{\rho} \bigoplus_{i \in I_{\rho}} \rho \otimes S_{a_i} \otimes S_{b_i}.  \]
Then for $i,j,k \in I_{\rho}$, we define the operators $dual$, $ui_{i,j}$ and $dual_k^{-}$ as follows. 
\begin{enumerate}
    \item $dual(\psi):=\widehat{\psi}$, where $\widehat{\psi}(w,x,y)=\psi(w,y,x).$  We identify the index set $I_{\rho}(\psi_{gp})$ with $I_{\rho}(\widehat{\psi}_{gp})$.
    \item For $r \in I_{\rho}$, let $A_r= \frac{a_r+b_r}{2}-1$ and $B_r= \frac{a_r-b_r}{2}$. Rewrite the decomposition of $\psi_{gp}$ as 
    \[ \psi_{gp}= \bigoplus_{\rho} \bigoplus_{i \in I_\rho} \rho \otimes S_{A_i+B_i+1} \otimes S_{A_i-B_i+1}.\]
    The operator $ui_{i,j}$ is applicable on $\psi$ if the following conditions hold.
    \begin{enumerate}
    \item $A_j \geq A_i+1 \geq B_j >B_i.$
        \item  For any $r \in I_{\rho}$, if $B_i<B_r<B_j$, then $A_r \leq A_i $ or $A_r \geq A_j$.
    \end{enumerate}
    In this case, we define $ui_{i,j}(\psi_{gp})$ by replacing the summands 
    \[ \rho\otimes S_{A_i+B_i+1}\otimes S_{A_i-B_i+1} +\rho\otimes S_{A_j+B_j+1}\otimes S_{A_j-B_j+1} \]
of $\psi_{gp}$ with
    \[\rho\otimes S_{A_j+B_i+1}\otimes S_{A_j-B_i+1} +\rho\otimes S_{A_i+B_j+1}\otimes S_{A_i-B_j+1}. \]
    If $A_i+1-B_j=0$, then we omit the last summand, and say this $ui_{i,j}$ is of type 3'. Finally, we define $ui_{i,j}(\psi)= \psi_{nu,>0}+\psi_{np} + ui_{i,j}(\psi_{gp}) +\psi_{np}^\vee+\psi_{nu,>0}^\vee$.
    \item The operator $dual_k^{-}$ is applicable on $\psi$ if $b_k=a_k+1$. In this case, we define $dual_k^{-}(\psi_{gp})$ by replacing the summand
    \[ \rho \otimes S_{a_k}\otimes S_{a_{k}+1}\]
    of $\psi_{gp}$ with 
    \[ \rho \otimes S_{a_k+1}\otimes S_{a_{k}},\]
    and we define $dual_k^{-}(\psi)= \psi_{nu,>0}+\psi_{np} + dual_k^{-}(\psi_{gp}) +\psi_{np}^\vee+\psi_{nu,>0}^\vee$. 
    \item 
    Let $T$ be any of the operators above or their inverses. If $T$ is not applicable on $\psi$, then we define $T(\psi)=\psi$.
\end{enumerate}
\end{defn}

\begin{rmk}
    We remark that the $B_r$ used above differs from that in M{\oe}glin's parameterization (see \S\ref{subsec Moeglin param}). The difference is that the $B_r$ above may be negative, while in M{\oe}glin's parameterization we take the absolute value.
\end{rmk}

In particular, we wish to study the effects of the raising operators.

\begin{defn}\label{def raising operator}
We say that $T$ is an \emph{raising} operator if it is of the form  
$ ui_{i,j}^{-1}$, $dual \circ ui_{j,i} \circ dual,$ or $dual_k^{-}$.
\end{defn}

Raising operators give a partial order on $\Psi(\pi).$

\begin{defn}\label{def operator ordering}
We define a partial order $\geq_{O}$ on $\Psi(G_n)$ by $\psi_1 \geq_{O} \psi_2$ if $\psi_1=\psi_2$ or there exists a sequence of raising operators $\{T_l\}_{l=1}^m$ such that
\[ \psi_1= T_1 \circ \cdots \circ T_m(\psi_2).\]
\end{defn}

Remarkably, this partial order defines unique maximal and minimal elements, denoted $\psi^{max}(\pi)$ and $ \psi^{min}(\pi)$, in $\Psi(\pi)$.
\begin{thm}[{\cite[Theorem 1.16(2)]{HLL22}}]\label{thm psi max min}
    Let $\pi\in\Pi(G_n)$ be of Arthur type. Then there exists unique maximal and minimal elements denoted by $\psi^{max}(\pi)$ and $ \psi^{min}(\pi)$, respectively, in $\Psi(\pi)$ with respect to the partial order $\geq_O.$
\end{thm}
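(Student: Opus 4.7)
The plan is to reduce the claim to a combinatorial question about extended multi-segments and then apply a termination-plus-confluence argument. First I would fix a base parameter $\psi_0 \in \Psi(\pi)$ and, via the M{\oe}glin-Atobe parameterization, identify $\pi$ with an extended multi-segment $\mathcal{E}_0$ attached to $\psi_0$. By the characterization of intersections of local Arthur packets in \cite{Ato23, HLL22}, the entire set $\Psi(\pi)$ is the orbit of $\psi_0$ under the three elementary operators $dual$, $ui_{i,j}$, and $dual_k^{-}$ acting compatibly on such pairs $(\psi, \mathcal{E})$. In particular, $\Psi(\pi)$ is finite, since every member determines the same (fixed) infinitesimal character of $\pi$, and only finitely many local Arthur parameters realize a given infinitesimal character.

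Next I would introduce a strict monovariant such as $\mathrm{rk}(\psi):=\sum_i B_i^2$, summed over the summands of $\psi_{gp}$ in the notation of Definition \ref{def operators on parameters}; a direct inspection of Definition \ref{def operators on parameters} shows that every raising operator strictly increases $\mathrm{rk}$, while every inverse raising operator strictly decreases it. Combined with the finiteness of $\Psi(\pi)$, this guarantees that every sequence of raising operators issuing from $\psi_0$ terminates at some maximal element of $\Psi(\pi)$, and symmetrically for the minimum via lowering. Uniqueness would then be upgraded via a Newman-type diamond lemma: I would show local confluence, namely that if $T_1, T_2$ are two distinct raising operators applicable to some $\psi \in \Psi(\pi)$ with both $T_1(\psi), T_2(\psi) \in \Psi(\pi)$, then there exists $\psi' \in \Psi(\pi)$ with $\psi' \geq_O T_1(\psi)$ and $\psi' \geq_O T_2(\psi)$. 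Termination plus local confluence then yields global confluence, hence unique maximal and minimal elements.

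The main obstacle is establishing local confluence, which demands a case-by-case analysis of all unordered pairs of raising operators drawn from the families $ui_{i,j}^{-1}$, $dual \circ ui_{j,i} \circ dual$, and $dual_k^{-}$. In most situations the two operators act on disjoint summands of $\psi_{gp}$ and commute outright, in which case $\psi' := T_1 T_2(\psi) = T_2 T_1(\psi)$ furnishes the desired join. The delicate cases are those in which the index pairs $(i,j)$ and $(i',j')$ of two $ui$-type operators overlap, or in which a $dual_k^{-}$ interacts with a $ui$ operator whose range touches the $k$-th summand; the type 3' convention of Definition \ref{def operators on parameters} must be handled with care in the boundary case $A_i + 1 = B_j$. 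Each non-commuting configuration must be resolved by exhibiting an explicit sequence of raising operators that produces a common join while still lying in $\Psi(\pi)$, and the verification that the resulting parameter still carries $\pi$ in its packet relies on M{\oe}glin's explicit conditions for a pair $(\psi, \mathcal{E})$ to define a nonzero representation. I expect this combinatorial case analysis to be the bulk of the work.
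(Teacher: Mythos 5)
First, note that the paper you are working from does not prove this statement at all: it is imported wholesale from \cite[Theorem 1.16(2)]{HLL22}, and the proof there proceeds by an explicit construction and characterization of $\psi^{max}(\pi)$ and $\psi^{min}(\pi)$ (via the parameterization data of $\pi$ and a detailed analysis of the operators), not by an abstract rewriting argument. So your Newman-lemma strategy is a genuinely different route; the question is whether it closes, and as written it does not.

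There are two concrete gaps. (a) Your monovariant is false: raising operators do \emph{not} strictly increase $\mathrm{rk}(\psi)=\sum_i B_i^2$. By Definition \ref{def operators on parameters}, $ui_{i,j}$ (and hence $ui_{i,j}^{-1}$ and, off type 3', $dual\circ ui_{j,i}\circ dual$) replaces the pairs $(A_i,B_i),(A_j,B_j)$ by $(A_j,B_i),(A_i,B_j)$, so the multiset of $B$'s — and a fortiori $\sum_i B_i^2$ — is unchanged; and $dual_k^-$ sends $B_k=-\tfrac12$ to $+\tfrac12$, again leaving $B_k^2$ fixed. So your termination/acyclicity argument has no certificate (one would need a different invariant, e.g.\ something like $\sum_i b_i^2$, and even that must be checked against the type 3' and $dual\circ ui\circ dual$ cases where the number of Jordan blocks changes). (b) More seriously, local confluence — which you correctly identify as the bulk of the work — is not carried out, and it is not a purely parameter-level combinatorial check: the rewriting must stay inside $\Psi(\pi)$, i.e.\ at each step one must verify that the specific representation $\pi$ (equivalently, the M{\oe}glin data $(\underline{\zeta},\underline{l},\underline{\eta})$ or the extended multi-segment) survives the operator with nonzero packet membership. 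An operator can be applicable to $\psi$ without $\pi\in\Pi_{T(\psi)}$, so exhibiting a common join requires the nonvanishing analysis that constitutes the actual content of \cite{HLL22}. Relatedly, for Newman's lemma to give a \emph{global} unique maximum you also need $\Psi(\pi)$ to be connected by raising/lowering moves alone; the intersection results of \cite{Ato23,HLL22} that you invoke involve a larger collection of operators (including $dual$ and order changes), so this connectivity is itself something to justify rather than quote. Until (a) is replaced by a correct strictly monotone quantity and (b) is actually proved, the proposal is a plausible outline but not a proof.
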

Moreover, $\psi^{max}(\pi)$ and $ \psi^{min}(\pi)$ are the unique maximal and minimal elements with represent to many other orderings (\cite[Theorem 1.16]{HLL22} and \cite[Theorems 1.9, 1.12]{HLLZ22}).

\subsection{M{\oe}glin's parameterization}\label{subsec Moeglin param}
 Let $G'=G_n,H_m.$ In this section, we review M{\oe}glin's parameterization of the local Arthur packets $\Pi_{\psi}$ for $\psi\in\Psi^+(G')$ (\cite{Moe06a, Moe06b, Moe09a, Moe10, Moe11a}) In view of Theorems \ref{thm red from nu to gp symp} and \ref{thm red from nu to gp orthog}, it is sufficient to give the parameterization for $\psi\in\Psi_{gp}(G').$ We decompose such $\psi$ as 
 \begin{equation}\label{eqn gp decomp}
     \psi:= \bigoplus_{\rho}\left(\bigoplus_{i\in I_\rho} \rho \otimes S_{a_i} \otimes S_{b_i}\right).
 \end{equation}
For $i\in I_\rho,$ we set $d_i=\min(a_i,b_i)$ and $\zeta_i\in\{\pm1\}$ such that $\zeta_i(a_i-b_i)\geq 0.$ If $a_i=b_i,$ we set $\zeta_i=1$ by convention. We also fix a total order $>$ on $I_\rho$ such that
\begin{equation}\label{eqn admissible}
a_i+b_i>a_j+b_j, \, |a_i-b_i|>|a_j-b_j|,\, \mathrm{and}\, \zeta_i=\zeta_j \implies i>j.
\end{equation}
For $i\in I_\rho,$ let $l_i$ to be some integer with $0\leq l_i \leq \frac{d_i}{2}$ and $\eta_i\in\{\pm1\}$ such that the sign condition holds:
\begin{equation}\label{Moeglin sign}
    \prod_{\rho} \prod_{i \in I_\rho} (-1)^{[\frac{d_i}{2}]+l_i} \eta_i^{d_i} = \epsilon_{G'}.
\end{equation}
Here, $\epsilon_{G'}=1$ if $G'=G_n$ is symplectic and is given by the Hasse invariant otherwise. That is, $\epsilon_{H_m}\in\{\pm 1\}$.

Let $\underline{\zeta},\underline{l},\underline{\eta}$ denote the collections of $(\zeta_i)_{\rho,i\in I_\rho}, (l_i)_{\rho,i\in I_\rho},(\eta_i)_{\rho,i\in I_\rho}$ for the various $\rho.$ To such data, M{\oe}glin constructed a representation $\pi_>(\psi,\underline{\zeta},\underline{l},\underline{\eta})$ of $G'$ which either vanishes or belongs to the local Arthur packet $\Pi_{\psi}$ (\cite{Moe06a, Moe06b, Moe09a, Moe10, Moe11a}, see also \cite{Xu17}).

\begin{thm}\label{thm Moeglin construction} Let $\psi$ be a local Arthur parameter.
\begin{enumerate}
    \item The representation $\pi_>(\psi,\underline{\zeta},\underline{l},\underline{\eta})$ is either irreducible or zero.
    \item The local Arthur packet is given by exhausting the above representations, i.e., $\Pi_\psi =\{\pi_>(\psi,\underline{\zeta},\underline{l},\underline{\eta})\}\setminus\{0\}.$
\end{enumerate}
\end{thm}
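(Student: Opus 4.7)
The plan is to proceed by induction on the total rank $N(\psi):=\sum_{\rho,i\in I_\rho} d_ia_ib_i$, peeling off the ``outermost'' block of the largest summand of $\psi$ at each step. By Theorems \ref{thm red from nu to gp symp} and \ref{thm red from nu to gp orthog}, it suffices to handle $\psi\in\Psi_{gp}(G')$ of good parity. The base case $N(\psi)=0$ is trivial; when $\psi$ is tempered (all $b_i=1$), the result reduces to Arthur's tempered local Langlands correspondence for classical groups, with the data $(\underline{l},\underline{\eta})$ parameterizing characters of the component group $S_\psi$ under the sign constraint \eqref{Moeglin sign}.

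For the inductive step, fix the largest $i_0\in I_\rho$ under the admissible order of \eqref{eqn admissible}, and set $A=(a_{i_0}+b_{i_0})/2-1$, $B=|a_{i_0}-b_{i_0}|/2$, $\zeta=\zeta_{i_0}$. Depending on $l_{i_0}$ and $\eta_{i_0}$, define a smaller good-parity parameter $\psi'$ by M{\oe}glin's recipe: either shrink the block $\rho\otimes S_{a_{i_0}}\otimes S_{b_{i_0}}$ symmetrically while recording a segment $\Delta_{\rho}[\zeta(B+1),\zeta A]$, or omit the block entirely at a boundary value of $l_{i_0}$. Propagate the constrained data to $(\underline{\zeta}',\underline{l}',\underline{\eta}')$ and set
\begin{equation*}
\pi_>(\psi,\underline{\zeta},\underline{l},\underline{\eta}) := \mathrm{soc}\bigl(\Delta_{\rho}[\zeta(B+1),\zeta A]\rtimes \pi_>(\psi',\underline{\zeta}',\underline{l}',\underline{\eta}')\bigr),
\end{equation*}
where the inductive hypothesis supplies $\pi_>(\psi',\ldots)$ and $\mathrm{soc}$ denotes the socle, interpreted as $0$ when the relevant partial Jacquet module vanishes.

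For part (1), irreducibility or vanishing is proved by analyzing this socle: the admissibility of the order on $I_\rho$ forces the cuspidal support of $\pi_>(\psi',\ldots)$ to interact with the inducing segment in a controlled way, so either a specific Jacquet component is zero, yielding $\pi_>(\psi,\ldots)=0$, or the induction admits a unique irreducible subrepresentation by the Langlands classification applied to the enlarged multisegment together with Frobenius reciprocity (Lemma \ref{lemma Frobenius rec}). For part (2), exhaustion has two substeps: first, injectivity of $(\underline{\zeta},\underline{l},\underline{\eta})\mapsto \pi_>(\psi,\ldots)$ on tuples with nonzero image is checked inductively by showing that the recipe is reversible via an appropriate partial Jacquet module, compatibly with M{\oe}glin's multiplicity-freeness of $\Pi_\psi$; second, matching with Arthur's endoscopic definition is obtained by verifying that the alternating sum $\sum \epsilon(\underline{l},\underline{\eta})\,\pi_>(\psi,\underline{\zeta},\underline{l},\underline{\eta})$, weighted by the characters of the component group $S_\psi$, satisfies Arthur's stable and twisted endoscopic character identities.

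The main obstacle is the second substep of part (2), namely the identification of the explicitly constructed representations with Arthur's endoscopically-defined packet. This requires tracking how parabolic induction and Jacquet restriction interact with twisted endoscopic transfer through the peeling procedure, and it is exactly this part that occupies M{\oe}glin's series \cite{Moe06a, Moe06b, Moe09a, Moe10, Moe11a}; Xu's work \cite{Xu17} systematizes the passage from the good-parity case to the general $\psi\in\Psi^+(G')$ via the decomposition \eqref{eqn psi decomp general}, so that the main argument can be concentrated on $\Psi_{gp}(G')$ as above.
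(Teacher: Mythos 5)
This theorem is not proved in the paper at all: it is M{\oe}glin's construction/exhaustion theorem, quoted with references to \cite{Moe06a, Moe06b, Moe09a, Moe10, Moe11a} and \cite{Xu17}, so there is no in-paper argument to compare yours against. Judged on its own terms, your proposal is a reasonable roadmap of how the literature actually proceeds — reduction to good parity via Theorems \ref{thm red from nu to gp symp} and \ref{thm red from nu to gp orthog}, a tempered base case resting on Arthur's classification, an inductive definition of $\pi_>(\psi,\underline{\zeta},\underline{l},\underline{\eta})$ by peeling Jordan blocks and taking socles/partial Jacquet modules, then injectivity and comparison with Arthur's endoscopic packet — but it is an outline, not a proof.

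The genuine gap is that every hard step is asserted rather than established. The irreducibility-or-vanishing claim in part (1) is not a formal consequence of the Langlands classification plus Frobenius reciprocity; it requires M{\oe}glin's delicate analysis of Jacquet modules (and, in modern reformulations, the dominating parameters with discrete diagonal restriction and the $Jac$-functor calculus that Xu systematizes), none of which you carry out. More seriously, for part (2) you explicitly defer the identification of the constructed representations with Arthur's endoscopically defined multi-set — the stable and twisted character identities, multiplicity one, and exhaustion — to M{\oe}glin's series of papers. Since that identification is the entire content of the theorem, your argument amounts to a proof by citation, which is precisely how the present paper treats the statement; as a blind self-contained proof it does not succeed. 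If your goal was to use the theorem, citing M{\oe}glin (and Xu's Proposition 8.11 / Atobe's reformulation) as the paper does is the appropriate move; if your goal was to prove it, the socle construction, the nonvanishing criteria, the reversibility of the recipe, and above all the endoscopic comparison would each need full arguments.
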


The following notation and terminology is useful when describing various results, especially in Xu's nonvanishing algorithm (see \S\ref{sec Xu nonvanish}).

Given a summand $\rho\otimes S_{a_i}\otimes S_{b_i}$ of $\psi,$ we define the corresponding Jordan block to be the tuple $(\rho, a_i, b_i)$. We let the set of all Jordan blocks attached to $\psi$ be denoted by $\Jord(\psi).$ We can view the collections $\underline{\zeta},\underline{l},\underline{\eta}$ as functions on $\Jord(\psi)$. For example, we have $\underline{\zeta}(\rho,a_i,b_i)=\zeta_i$. Equivalently, we view  $\Jord(\psi)$ as the set of tuples $(\rho, A_i, B_i, \zeta_i)$ where $A_i=\frac{a_i+b_1}{2}-1$ and $B_i=\frac{|a_i-b_i|}{2}.$ We also set $\Jord_\rho(\psi)=\{(\rho',A,B,\zeta)\in\Jord(\psi) \ | \ \rho'=\rho\}.$

Let $>_\psi$ (which we also denote by $>$ when there is no chance for confusion) denote a total order on $\Jord(\psi).$ We say that the order $>$ is admissible if for any $(\rho,A,B,\zeta),(\rho,A',B',\zeta')\in\Jord(\psi)$ with $A>A',$ $B>B',$ and $\zeta=\zeta'$, we have $(\rho,A,B,\zeta)>(\rho,A',B',\zeta').$ This property is equivalent to \eqref{eqn admissible}.

We say that $\psi$ (or $\Jord(\psi)$) has discrete diagonal restriction if $\psi$ is of good parity and for any $(\rho,A,B,\zeta),(\rho,A',B',\zeta')\in\Jord(\psi)$, the intervals $[B,A]$ and $[B',A']$ do not intersect. Suppose that $\psi$ is of good parity, $\psi_\gg$ is of discrete diagonal restriction, and $>_\psi$ is an admissible order on $\Jord(\psi)$ such that
$$
(\rho,A_i,B_i,\zeta_i)>_\psi(\rho,A_{i-1},B_{i-1},\zeta_{i-1}).
$$
We say that $\psi_\gg$ dominates $\psi$ (or that the Jordan blocks $\Jord(\psi_\gg)$ dominate $\Jord(\psi)$) if there exists an ordering $>_{\psi_\gg}$ on $\Jord(\psi_\gg)$ such that
$$
(\rho,A_{\gg,i},B_{\gg,i},\zeta_{\gg,i})>_{\psi_\gg}(\rho,A_{\gg,i-1},B_{\gg,i-1},\zeta_{\gg,i-1}),
$$
and there exists nonnegative integers $T_i$ such that for $(\rho,A_{\gg,i},B_{\gg,i},\zeta_{\gg,i})\in\Jord(\psi_\gg)$, we have $(\rho,A_{\gg,i},B_{\gg,i},\zeta_{\gg,i})=(\rho,A_i+T_i,B_i+T_i,\zeta_i)$ where $(\rho,A_i+T_i,B_i+T_i,\zeta_i)\in\Jord(\psi)$ and the ordering $>_{\psi_\gg}$ agrees with $>_{\psi}.$

% \subsection{Operators on M{\oe}glin's parameterization}

The operators of \cite{HLL22} (and \cite{Ato23}) can be translated from Atobe's parameterization of local Arthur packets to M{\oe}glin's parameterization of local Arthur packets using \cite[Theorem 6.6]{Ato20b}. The details of the definition are not needed for our purpose as the below theorem is sufficient. Given data in M{\oe}glin's parameterization $(\psi,\underline{\zeta},\underline{l},\underline{\eta})$ and operator $T$, we let $T(\psi,\underline{\zeta},\underline{l},\underline{\eta})$ denote the effect of $T$ on M{\oe}glin's parameterization. From \cite[Theorem 1.3]{HLL22} and \cite[Theorem 6.6]{Ato20b}, we obtain the following theorem.

\begin{thm}\label{thm raising operator preserves rep}
    Let $T$ be a raising operator and suppose that $\pi_>(\psi,\underline{\zeta},\underline{l},\underline{\eta})$ and $\pi_>T(\psi,\underline{\zeta},\underline{l},\underline{\eta})$ are representations of $G_n$ which are both non-vanishing. Then we have $\pi_>(\psi,\underline{\zeta},\underline{l},\underline{\eta})=\pi_>T(\psi,\underline{\zeta},\underline{l},\underline{\eta}).$ In other words,
    $$
    \pi_>(\psi,\underline{\zeta},\underline{l},\underline{\eta})\in\Pi_\psi\cap\Pi_{T(\psi)}.
    $$
\end{thm}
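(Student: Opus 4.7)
The statement asserts that applying a raising operator to M\oe glin parameterization data yields either zero or the same representation, and in particular gives an element in the intersection $\Pi_\psi \cap \Pi_{T(\psi)}$. The plan is to deduce this by transporting the known analogue in Atobe's parameterization (namely \cite[Theorem 1.3]{HLL22}) across the dictionary of \cite[Theorem 6.6]{Ato20b}.

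First I would reduce to the case where $\psi$ is of good parity. By Theorems \ref{thm red from nu to gp symp} and \ref{thm red from nu to gp orthog}, any representation in $\Pi_\psi$ is obtained from a representation in $\Pi_{\psi_{gp}}$ by an irreducible induction of the form $\tau_{\psi_{nu,>0}} \times \tau_{\psi_{np}} \rtimes (-)$, and the raising operators $ui_{i,j}^{-1}$, $dual \circ ui_{j,i} \circ dual$, and $dual^{-}_k$ are defined to act only on the good-parity part $\psi_{gp}$, leaving $\psi_{nu,>0}$ and $\psi_{np}$ untouched. Thus it suffices to prove the claim for $\psi_{gp}$, since equality of the two induced-from representations follows immediately from equality at the good-parity level.

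Next I would invoke Atobe's extended multi-segment parameterization. By \cite[Theorem 6.6]{Ato20b}, M\oe glin's datum $(\psi,\underline\zeta,\underline l,\underline\eta)$ corresponds bijectively to an extended multi-segment $\mathcal{E}$ such that $\pi_>(\psi,\underline\zeta,\underline l,\underline\eta) = \pi(\mathcal{E})$ whenever the left-hand side is nonzero, and similarly a nonvanishing $\pi_>T(\psi,\underline\zeta,\underline l,\underline\eta)$ corresponds to $\pi(T(\mathcal{E}))$, where $T(\mathcal{E})$ is the transport of $T$ to the extended multi-segment side. The definitions of $ui_{i,j}$, $dual$, and $dual^{-}_k$ in Definition \ref{def operators on parameters} are explicitly designed to agree under this dictionary with the operators studied in \cite{HLL22}; this is the content of the translation summarized before the statement of Theorem \ref{thm raising operator preserves rep}.

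Then I would apply \cite[Theorem 1.3]{HLL22}, which is the precise Atobe-side statement: if $T$ is a raising operator and both $\pi(\mathcal{E})$ and $\pi(T(\mathcal{E}))$ are nonzero, then $\pi(\mathcal{E}) = \pi(T(\mathcal{E}))$. Combining this equality with the translation of the previous step yields
\[
\pi_>(\psi,\underline\zeta,\underline l,\underline\eta) \;=\; \pi_>T(\psi,\underline\zeta,\underline l,\underline\eta),
\]
and since the left-hand side is a member of $\Pi_\psi$ and the right-hand side is a member of $\Pi_{T(\psi)}$ by Theorem \ref{thm Moeglin construction}, the common representation lies in $\Pi_\psi \cap \Pi_{T(\psi)}$, as desired.

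The main obstacle, and essentially the only non-routine point, is verifying that Definition \ref{def operators on parameters} really is the faithful transport of the Atobe-side operators of \cite{HLL22} for each of the three types of raising operators, including the degenerate type $3'$ case of $ui_{i,j}$ when $A_i+1-B_j=0$ (where a summand must be dropped rather than replaced) and the $dual^{-}_k$ operator (which uses the $b_k=a_k+1$ condition). These case checks are already encoded in \cite[Theorem 6.6]{Ato20b} together with the translation dictionary in \cite{HLL22}, so the proof of Theorem \ref{thm raising operator preserves rep} is essentially a bookkeeping step once those two inputs are in hand.
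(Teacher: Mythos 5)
Your proposal is correct and follows essentially the same route as the paper, which deduces the theorem directly by combining \cite[Theorem 1.3]{HLL22} (the statement on the Atobe side) with the dictionary of \cite[Theorem 6.6]{Ato20b} translating between Atobe's and M{\oe}glin's parameterizations. The extra detail you supply (reduction to good parity and the type $3'$ and $dual_k^-$ case checks) is consistent with, and already implicit in, the paper's citation-based argument.
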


The above theorem is true without the restriction that $T$ is a raising operator. In fact, $T$ can be any of the operators in \cite[Theorem 1.3]{HLL22} or \cite[Theorem 1.4]{Ato23}.

One particular operator that we require is the change of order operator, also called the row exchange operator. This was originally defined for M{\oe}glin's construction by Xu (\cite[\S6.1]{Xu21}). We recall the definition below.

\begin{defn}[{\cite[\S6.1]{Xu21}}, Row exchange]\label{defn row exchange} 
Suppose $(\psi_>, \underline{\zeta},\underline{l},\underline{\eta})$ is a part of M{\oe}glin's parameterization of the local Arthur packet $\Pi_\psi.$
Let $>$ be an admissible order on $\Jord(\psi)$ and fix two adjacent Jordan blocks $(\rho,A_{k+1},B_{k+1},\zeta_{k+1})>(\rho,A_{k},B_{k},\zeta_{k})$ in $\Jord(\psi).$ Define an order $\gg$ on $\Jord(\psi)$ by the order coming from $>$ by switching $(\rho,A_k,B_k,\zeta_k)$ and $ (\rho,A_{k+1},B_{k+1},\zeta_{k+1}).$
If $\gg$ is not an admissible order on $\Jord(\psi)$, then we define $R_k(\psi, \underline{\zeta},\underline{l},\underline{\eta})=(\psi, \underline{\zeta},\underline{l},\underline{\eta})$. Otherwise, we define 
\[R_{k}(\psi, \underline{\zeta},\underline{l},\underline{\eta})=(\psi, \underline{\zeta'},\underline{l'},\underline{\eta}'),\]
where $( l_i',\eta_i')=(l_i,\eta_i)$ for $i \neq k,k+1$, and $(l_k',\eta_k')$ and $(l_{k+1}', \eta_{k+1}')$ are given as follows:
\begin{enumerate}
    \item [Case 1.] Assume that $\zeta_{k+1}=\zeta_{k}$ and $[A_k,B_k]_{\rho} \subseteq [A_{k+1},B_{k+1}]_{\rho}.$
\begin{enumerate}
    \item [(a)] If $\eta_{k+1}\neq(-1)^{A_{k}-B_k}\eta_k$, then
    \begin{align*}
        &(l_{k+1}', \eta_{k+1}', l_k', \eta_{k}')\\
        = &(l_{k+1}-1-(A_k-B_k-2l_k),(-1)^{B_{k+1}-A_{k+1}}\eta_{k}',l_k,(-1)^{A_{k+1}-B_{k+1}}\eta_k).
    \end{align*}
    \item [(b)] If $\eta_{k+1}=(-1)^{A_{k}-B_k}\eta_k$, and $l_{k+1}-l_k < \frac{A_{k+1}-B_{k+1}}{2}-(A_k-B_k)+l_k $, then
    \begin{align*} &(l_{k+1}', \eta_{k+1}', l_k', \eta_{k}')\\
    = &(l_{k+1}+1+(A_k-B_k-2l_k),(-1)^{B_{k+1}-A_{k+1}+1}\eta_{k}',l_k,(-1)^{A_{k+1}-B_{k+1}}\eta_k).  \end{align*}
    \item [(c)] If $\eta_{k+1}=(-1)^{A_{k}-B_k}\eta_k$, and $l_{k+1}-l_k \geq \frac{A_{k+1}-B_{k+1}}{2}-(A_k-B_k)+l_k $, then
    \begin{align*}
        &(l_{k+1}', \eta_{k+1}', l_k', \eta_{k}') \\
        = &(2l_{k}-l_{k+1}+A_{k+1}-A_k-B_{k+1}+B_k,(-1)^{B_{k+1}-A_{k+1}}\eta_{k}',l_k,(-1)^{A_{k+1}-B_{k+1}}\eta_k). 
    \end{align*}
\end{enumerate}
We also denote this transformation by $S^-.$

    \item [Case 2.] Assume that $\zeta_{k+1}=\zeta_{k}$ and $[A_k,B_k]_{\rho} \supseteq [A_{k+1},B_{k+1}]_{\rho}.$ In this case, we simply reverse the construction the the previous case. We also denote this transformation by $S^+.$
    
    \item [Case 3.] Assume that $\zeta_{k+1}\neq\zeta_{k}.$ Then
     \[ (l_{k+1}', \eta_{k+1}', l_k', \eta_{k}')= (l_{k+1},(-1)^{B_{k}-A_{k}-1}\eta_{k+1},l_k,(-1)^{B_{k+1}-A_{k+1}-1}\eta_k).  \]
\end{enumerate}
\end{defn}

Xu proved that changing the order in this way preserves the representation.

\begin{thm}[{\cite[Theorems 6.2, 6.3]{Xu21}}]\label{thm Moeglin row exchange equiv}
    For any quasi-split symplectic or special orthogonal group, we have $\pi_>(\psi,\underline{\zeta},\underline{l},\underline{\eta})=\pi_\gg(R_k(\psi, \underline{\zeta},\underline{l},\underline{\eta})).$
\end{thm}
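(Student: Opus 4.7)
The plan is to establish the row exchange identity by first reducing to the two-block setting and then separately treating the three cases in Definition \ref{defn row exchange}. Throughout, I would work directly with M\oe glin's construction of $\pi_>(\psi,\underline{\zeta},\underline{l},\underline{\eta})$, which is given as an iterated sequence of parabolic inductions and partial Jacquet module extractions whose order is dictated by $>_\psi$.

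First I would reduce to the case where $\Jord(\psi)$ consists of just the two adjacent blocks $(\rho, A_k, B_k, \zeta_k)$ and $(\rho, A_{k+1}, B_{k+1}, \zeta_{k+1})$. Since $R_k$ only alters the two coordinates at positions $k$ and $k+1$, and since M\oe glin's construction processes blocks layer by layer according to $>$, the outer layers (blocks strictly greater than $k+1$) and inner layers (blocks strictly less than $k$) are constructed identically on both sides and can be stripped off simultaneously. This reduction is legitimate because each layer's extraction involves a cuspidal support determined solely by the data of its own Jordan block together with what has already been extracted.

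Second, I would dispose of Case 3 ($\zeta_{k+1}\ne\zeta_k$). Here the extractions associated to $(A_k,B_k,\zeta_k)$ and $(A_{k+1},B_{k+1},\zeta_{k+1})$ involve segments $[\zeta_k B_k,\zeta_k A_k]_\rho$ and $[\zeta_{k+1}B_{k+1},\zeta_{k+1}A_{k+1}]_\rho$ on opposite half-lines of the real line, so their effects on the representation genuinely commute and the transposed construction produces the same irreducible object. The parity shifts $(-1)^{B_k-A_k-1}$ and $(-1)^{B_{k+1}-A_{k+1}-1}$ appearing in the formula for $(\underline{\eta}', \underline{l}')$ arise solely from tracking the sign convention in \eqref{Moeglin sign} when the order of the two factors is reversed.

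For Cases 1 and 2 (same $\zeta$, one interval containing the other), I would pass to a dominating parameter $\psi_{\gg}$ of discrete diagonal restriction. At the level of $\psi_{\gg}$ all Jordan blocks correspond to disjoint segments, so adjacent same-sign blocks can be swapped trivially, and the row exchange identity holds vacuously. The substantive work is to descend from $\psi_{\gg}$ back to $\psi$ via the completion procedure and to verify that the data $(\underline{l},\underline{\eta})$ transforms as prescribed. This splits into the subcases (a), (b), (c) according to whether $\eta_{k+1}=(-1)^{A_k-B_k}\eta_k$ and the comparison between $l_{k+1}-l_k$ and $\tfrac{A_{k+1}-B_{k+1}}{2}-(A_k-B_k)+l_k$; each dichotomy corresponds to a distinct cancellation pattern in the iterated derivatives. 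The main obstacle is precisely this combinatorial bookkeeping in Cases 1(a)--(c): the formulas for $(l',\eta')$ are non-obvious and amount to comparing two different sequences of partial Jacquet modules applied to nested chains, where one must verify explicit sign and multiplicity identities using the Langlands data of the intermediate multisegment representations. I expect that, as in Xu's treatment, this ultimately reduces to a finite list of identities which can be checked by direct, if tedious, computation with Zelevinsky-type algorithms identifying the unique irreducible constituent on each side.
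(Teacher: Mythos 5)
First, note that the paper does not prove this statement at all: Theorem \ref{thm Moeglin row exchange equiv} is quoted verbatim from \cite[Theorems 6.2, 6.3]{Xu21}, so there is no internal proof to compare against; what you have written is an outline of how one might reprove Xu's result. As an outline it has roughly the right shape (reduction to the two adjacent blocks, case analysis following Definition \ref{defn row exchange}, comparison with a dominating parameter of discrete diagonal restriction and descent), but it does not constitute a proof, because the entire content of the theorem is the list of explicit formulas for $(l_k',\eta_k',l_{k+1}',\eta_{k+1}')$ in Cases 1(a)--(c) and Case 3, and your argument never derives them. In Cases 1 and 2 you observe that at the level of $\psi_{\gg}$ the swap is vacuous (indeed, for disjoint same-$\zeta$ intervals the reversed order is not even admissible, so $R_k$ is the identity there), and then you state that ``the substantive work is to descend \dots and verify that the data transforms as prescribed,'' finally asserting this reduces to identities ``checked by direct, if tedious, computation.'' That descent is exactly where the theorem lives: one must track how $(\underline{l},\underline{\eta})$ for $\psi$ is determined from $(\underline{l}_{\gg},\underline{\eta}_{\gg})$ under two different admissible orders, through Mœglin's recursive construction (iterated Jacquet-module extractions whose outputs are generally reducible before taking the relevant irreducible constituent), and this is precisely the computation you defer. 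Deferring it means the dichotomies 1(a)/(b)/(c), and in particular the threshold $l_{k+1}-l_k \lessgtr \frac{A_{k+1}-B_{k+1}}{2}-(A_k-B_k)+l_k$, are never explained, let alone proved.

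There is also a concrete error in your treatment of Case 3: you claim the factors $(-1)^{B_k-A_k-1}$, $(-1)^{B_{k+1}-A_{k+1}-1}$ ``arise solely from tracking the sign convention in \eqref{Moeglin sign}.'' The condition \eqref{Moeglin sign} is a single global product constraint which is satisfied by both the old and the new data, so it cannot determine the individual sign changes of $\eta_k$ and $\eta_{k+1}$; those shifts come from the dependence of Mœglin's $\eta$-normalization on the chosen admissible order inside the construction itself, and extracting the precise exponents again requires the computation you are avoiding. Moreover, if the two extractions ``genuinely commuted'' with unchanged labels, as you assert, there would be no $\eta$-shift at all, so your Case 3 argument is internally inconsistent with the statement being proved. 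Finally, the initial reduction to two blocks by ``stripping off'' outer and inner layers also needs justification: the construction is recursive and the extraction attached to one block is performed on a representation already depending on the other blocks, so commuting past them is not automatic; in \cite{Xu21} this is handled by induction together with the dominating-parameter machinery, not by a formal layer-stripping argument.
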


\subsection{The {A}dams conjecture}\label{sec Adams conjecture}
As in \cite[\S3.2]{GI14}, we fix a pair of characters $\chi_W,\chi_V$ associated to $W_n$ and $V_m$ respectively (technically this does not depend on their dimensions, but on the Witt tower that they lie in). More specifically, $\chi_W$ is the trivial character of $F^\times$ and $\chi_V$ is quadratic character associated to $F(\sqrt{\mathrm{disc}V_m})/F.$ Note that for any $V\in\mathcal{V}^+\cup\mathcal{V}^-$ (see \eqref{eqn tower+}, \eqref{eqn tower-}), we have $\mathrm{disc}V=d\not\in (F^\times)^2.$ Given a fixed $\pi\in\Pi(G_n)$ we let $\theta_{-\alpha}^{\pm}(\pi)=\theta_{W_n,V_m^\pm,\psi_F}$ be its little theta lift with respect to $V_m^\pm\in\mathcal{V}^\pm$ where $\dim V_m^\pm=m$ and $\alpha=m-n-1$ is a positive odd integer. When it is clear in context which tower is the target tower, we will often drop the $\pm.$

It is known that the theta correspondence does not preserve $L$-packets. As a remedy, Adams proposed that instead, it should preserve local Arthur packets (\cite{Ada89}). We recall Adams' conjecture below. We remark that Adams' conjecture is more broadly stated for other dual pairs; however, we only concern ourselves with the dual pair $(G_n, H_m)$.

\begin{conj}[The Adams conjecture, {\cite[Conjecture A]{Ada89}}]\label{conj Adams}
    Let $\pi\in\Pi(G_n)$ and suppose that $\pi\in\Pi_\psi$ for some $\psi\in\Psi(G_n).$ If $\theta_{-\alpha}(\pi)\neq 0,$ then $\theta_{-\alpha}(\pi)\in\Pi_{\psi_\alpha}$ where
    \begin{equation}\label{eqn psi_alpha}
    \psi_\alpha=(\chi_W\chi_V^{-1}\otimes\psi)\oplus \chi_W\otimes S_1\otimes S_\alpha.
    \end{equation}
\end{conj}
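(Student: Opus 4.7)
The natural plan is to exploit M{\oe}glin's explicit parameterization of local Arthur packets reviewed in \S\ref{subsec Moeglin param} to construct a candidate member of $\Pi_{\psi_\alpha}$ and to identify it with $\theta_{-\alpha}(\pi)$.

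\emph{Reduction to good parity.} By Theorem \ref{thm red from nu to gp symp}, I may write $\pi = \tau_{\psi_{nu,>0}} \times \tau_{\psi_{np}} \rtimes \pi_{gp}$ with $\pi_{gp} \in \Pi_{\psi_{gp}}$, and $\psi_\alpha$ has the analogous good-parity decomposition obtained by adjoining $\chi_W \otimes S_1 \otimes S_\alpha$ to $\psi_{gp}$. Using Kudla's filtration on the Weil representation together with a see-saw argument, the theta correspondence commutes (up to a fixed twist by $\chi_W\chi_V^{-1}$) with parabolic induction by general linear factors, so it suffices to handle the good-parity factor $\pi_{gp}$ and to verify that the resulting induced representation remains irreducible, which is the content of the non-unitary reduction theorems cited.

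\emph{Construction of the candidate and identification with the theta lift.} Write $\pi_{gp}=\pi_>(\psi_{gp}, \underline{\zeta}, \underline{l}, \underline{\eta})$. I would define a candidate $\pi_\alpha \in \Pi_{\psi_\alpha}$ by enlarging the parameterizing data with a new Jordan block $(\chi_W, 1, \alpha) \in \Jord(\psi_\alpha)$ whose attached data is the natural choice $\zeta_0 = -1$, $l_0 = 0$, and $\eta_0$ forced by the sign condition \eqref{Moeglin sign}. For $\alpha$ large, this block dominates all others, so it sits at the top of any admissible order and M{\oe}glin's construction produces an unambiguous non-zero representation. The core step is to prove $\pi_\alpha = \theta_{-\alpha}(\pi)$: following M{\oe}glin, one argues by induction on the size of $\Jord(\psi_{gp})$, stripping off an outermost segment $\Delta_\rho[x,y]$ from the Langlands data of $\pi_{gp}$, using Frobenius reciprocity (Lemma \ref{lemma Frobenius rec}) and Kudla's filtration to track the effect on $\theta_{-\alpha}(\pi)$, and matching this against the analogous combinatorial move on M{\oe}glin data producing $\pi_\alpha$. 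The combinatorial dominance of $(\chi_W, 1, \alpha)$ for $\alpha \gg 0$ is what keeps the induction clean, and the Howe duality theorem then upgrades ``both sides contain a common irreducible quotient'' to equality.

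\emph{Main obstacle.} The decisive difficulty is that for small $\alpha$ the candidate $\pi_\alpha$ can vanish even when $\theta_{-\alpha}(\pi) \neq 0$: the block $(\chi_W, 1, \alpha)$ may no longer dominate $\Jord(\psi_{gp})$, the sign condition \eqref{Moeglin sign} can force $\eta_0$ to a value that violates M{\oe}glin's non-vanishing criterion, or the admissibility requirement \eqref{eqn admissible} on the order of $\Jord(\psi_\alpha)$ may fail after insertion. One therefore expects, and M{\oe}glin's counterexamples confirm, that the conjecture as stated must break down below a threshold depending on $(\pi,\psi)$; this threshold is precisely the invariant $d(\pi,\psi)$ of Baki{\'c}--Hanzer (Theorem \ref{thm a=d intro}), whose dependence on the choice of $\psi \in \Psi(\pi)$ is the subject studied in the body of the paper.
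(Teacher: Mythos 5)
The statement you were asked about is a conjecture, not a theorem the paper proves: it is known to be true only for $\alpha\gg 0$ (M{\oe}glin's result, recalled as Theorem \ref{thm Moeglin Adams}) and to fail in low rank, with the precise range of validity given by $\alpha\geq d(\pi,\psi)$ (Theorem \ref{thm a=d}). Your proposal correctly recognizes this and, for the part that can be proved, follows essentially the same route the paper relies on: adjoining the Jordan block $(\chi_W,1,\alpha)$ with $l=0$ and $\eta$ forced by \eqref{Moeglin sign} to M{\oe}glin's parameterization (this is exactly Recipe \ref{rec pi_alpha}), reducing to good parity via Theorem \ref{thm red from nu to gp symp} and Kudla's filtration, and identifying the failure mechanism for small $\alpha$ as the vanishing of the candidate $\pi_\alpha$, which is precisely the Baki{\'c}--Hanzer invariant $d(\pi,\psi)$ that the rest of the paper studies.
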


M{\oe}glin verified that for large $\alpha$, the Adams conjecture is true.

\begin{thm}[{\cite[Theorem 6.1]{Moe11c}}]\label{thm Moeglin Adams}
    Let $\alpha\gg 0$. Suppose that $\pi\in\Pi_\psi$ for some $\psi\in\Psi_{gp}(G_n)$ and that $\pi=\pi_>(\psi,\underline{\zeta},\underline{l},\underline{\eta}).$ Then $\theta_{-\alpha}(\pi)\in\Pi_{\psi_\alpha}$. Moreover, $\theta_{-\alpha}(\pi)=\pi_>(\psi_\alpha,\underline{\zeta}',\underline{l}',\underline{\eta}')$ where $\underline{l}'$ and $\underline{\eta}'$ agree with $\underline{l}$ and $\underline{\eta}$ on the Jordan blocks of $\psi_\alpha$ that are coming from $\psi.$
\end{thm}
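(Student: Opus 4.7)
The plan is to follow M{\oe}glin's two-step strategy, handling first the case where $\psi$ has discrete diagonal restriction (DDR) and then reducing the general good parity case to that. For the DDR case, fix an admissible ordering $>$ on $\Jord(\psi)$. When $\alpha\gg 0$ (specifically, when $(\alpha-1)/2$ strictly exceeds $A_i+B_i$ for every block $(\rho,A_i,B_i,\zeta_i)\in\Jord(\psi)$), the new Jordan block $(\chi_W,1,\alpha)$ has $A=B=(\alpha-1)/2$ and $\zeta=1$, so it is dominant in the admissible extension of $>$ to $\Jord(\psi_\alpha)$ and $\psi_\alpha$ still has DDR. In M{\oe}glin's inductive construction, $\pi_>(\psi_\alpha,\underline{\zeta}',\underline{l}',\underline{\eta}')$ is then built by producing the representation attached to the dominant block first and afterwards adding the blocks of $\psi$ one at a time.

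Next I would show that theta lifting intertwines this inductive step. Using Kudla's filtration of the Jacquet module of the Weil representation along the maximal parabolic with Levi $\GL_\alpha(F)\times H_{m-2\alpha}$, one establishes that in the stable range $\theta_{-\alpha}(\pi)$ appears as the unique irreducible subquotient of a parabolic induction $\Delta_{\chi_W}[-(\alpha-1)/2,(\alpha-1)/2]\rtimes\theta'(\pi)$, where $\theta'(\pi)$ is, up to a twist by $\chi_W\chi_V^{-1}$, the image of $\pi$ on a smaller space in the same Witt tower. Combined with the compatibility between M{\oe}glin's construction and parabolic induction from a dominant block (the orthogonal-side analogue of Theorem \ref{thm red from nu to gp orthog}), this identifies $\theta_{-\alpha}(\pi)$ with $\pi_>(\psi_\alpha,\underline{\zeta}',\underline{l}',\underline{\eta}')$, forcing $(l',\eta')$ on the new block to be $(0,+)$ while preserving $\underline{l},\underline{\eta}$ on the blocks coming from $\psi$.

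To drop the DDR hypothesis, I would invoke the standard domination procedure: choose a DDR parameter $\psi_\gg$ dominating $\psi$ with the same ordering and nonnegative shifts $T_i$. M{\oe}glin's construction recovers $\pi_>(\psi,\underline{\zeta},\underline{l},\underline{\eta})$ from $\pi_>(\psi_\gg,\underline{\zeta},\underline{l},\underline{\eta})$ by iteratively applying partial Jacquet functors that decrement the $T_i$. Since the new block $(\chi_W,1,\alpha)$ remains well separated from every block of $\psi$ for $\alpha\gg 0$, the same Jacquet functors produce $\pi_>(\psi_\alpha,\underline{\zeta}',\underline{l}',\underline{\eta}')$ from $\pi_>((\psi_\gg)_\alpha,\underline{\zeta}',\underline{l}',\underline{\eta}')$ and commute with $\theta_{-\alpha}$ (again via Kudla's filtration), so the DDR conclusion propagates to the general good parity case.

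The main technical obstacle is the sign bookkeeping for the new block against the sign condition \eqref{Moeglin sign}. Although $\underline{l}$ and $\underline{\eta}$ on the old blocks are tracked cleanly through the Jacquet module arguments, the value of $\eta'$ on $(\chi_W,1,\alpha)$ is constrained by the Hasse invariant of $V_m^\pm$, and one must check it is consistent with which Witt tower supports $\theta_{-\alpha}(\pi)$. The assumption $\alpha\gg 0$ is essential here: for smaller $\alpha$ the target tower can shift from the going-up tower to the going-down tower and the required sign may not be realizable within $\Pi_{\psi_\alpha}$, which is precisely the mechanism by which Conjecture \ref{conj Adams} fails outside the stable range.
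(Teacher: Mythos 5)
The paper itself gives no proof of this statement: Theorem \ref{thm Moeglin Adams} is quoted from M{\oe}glin \cite{Moe11c}, so there is no internal argument to compare your sketch against. Your outline (treat the discrete-diagonal-restriction case first, use Kudla's filtration to track the theta lift, then descend to general good parity by domination and partial Jacquet functors) is indeed the skeleton of M{\oe}glin's strategy, so the overall plan is reasonable.

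However, there are concrete inaccuracies that would derail the bookkeeping if carried out. First, the added block $\chi_W\otimes S_1\otimes S_\alpha$ has $a=1<b=\alpha$, so its sign is $\zeta=-1$, not $+1$ (the paper consistently writes it as $(\chi_W,\frac{\alpha-1}{2},\frac{\alpha-1}{2},-1)$, e.g.\ in Recipe \ref{rec pi_alpha} and throughout \S\ref{sec proof of main thm}); this sign enters the admissibility of the order and the row-exchange formulas of Definition \ref{defn row exchange}, so it cannot be glossed over. Second, your claim that $(l',\eta')=(0,+)$ on the new block contradicts the remark immediately following the theorem: $l'=0$ is forced, but $\eta'$ is \emph{not} universally $+1$; it is dictated by the target Witt tower through the sign condition \eqref{Moeglin sign}, and fixing it to $+$ would wrongly pin the lift to one tower. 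Third, the stable-range description of $\theta_{-\alpha}(\pi)$ as the unique irreducible subquotient of $\Delta_{\chi_W}[-(\alpha-1)/2,(\alpha-1)/2]\rtimes\theta'(\pi)$ with $\theta'(\pi)$ a lift to a much smaller space is not what Kudla's filtration delivers: it gives a one-step relation between $\theta_{-\alpha}$ and $\theta_{-(\alpha-2)}$ through $\chi_W|\cdot|^{(\alpha-1)/2}\rtimes(-)$, valid only when the relevant cuspidal exponents avoid $\frac{\alpha-1}{2}$ (cf.\ the hypothesis in Lemma \ref{lem Kudla remark}), and identifying the outcome with M{\oe}glin's parameterization of $\Pi_{\psi_\alpha}$ requires her explicit computation of Jacquet modules of the lift, not merely an induction formula plus compatibility with a dominant block. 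The same caveat applies to your assertion that $\theta_{-\alpha}$ commutes with the partial Jacquet functors in the domination step: this is precisely where $\alpha\gg0$ must be invoked quantitatively, and as written it is asserted rather than proved. So the proposal is a plausible roadmap of M{\oe}glin's argument, but with these sign and stable-range steps left incorrect or unjustified it does not yet constitute a proof.
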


\begin{rmk}
    On the Jordan block corresponding to the summand $\chi_W\otimes S_1\otimes S_\alpha$ of $\psi_\alpha$, we must have $l'(\chi_W,1,\alpha)=0$; however, $\eta'(\chi_W,1,\alpha)$ is determined by the target tower $V\in\mathcal{V}^\pm$ via \eqref{Moeglin sign}.
\end{rmk}

M{\oe}glin also showed that the Adams conjecture is false for many examples (\cite{Moe11c}). Nevertheless, we are interested in computing its failure. For $\pi\in\Pi_\psi$, we set
\begin{equation}
   \mathcal{A}(\pi,\psi):=\{\alpha\geq 0, \alpha\equiv 1 (\mathrm{mod} \ 2) \ | \ \theta_{-\alpha}(\pi)\in\Pi_{\psi_{\alpha}}\}. 
\end{equation}
Let $a(\pi,\psi)=\min\mathcal{A}(\pi,\psi)$. M{\oe}glin posed the following question.

\begin{ques}[{\cite[\S6.3]{Moe11c}}]\label{ques Moeglin}
   Is there a way to compute $a(\pi,\psi)$ explicitly?
\end{ques}

Baki{\' c} and Hanzer translated this problem into determining whether certain non-vanishing conditions hold as we explain below (\cite{BH22}).

\begin{recipe}\label{rec pi_alpha}
    Suppose that $\pi\in\Pi_{\psi}.$ Let $\alpha\gg 0$ such that $\theta_{-\alpha}(\pi)=\pi_\alpha$ where $\pi_\alpha=\pi_>(\psi_\alpha,\underline{\zeta}',\underline{l}',\underline{\eta}')\in\Pi_{\psi_{\alpha}}$ is given as in Theorem \ref{thm Moeglin Adams}. For $i\geq1,$ we define local Arthur parameters $\psi_{\alpha-2i}$ and representations $\pi_{\alpha-2i}\in\Pi_{\psi_{\alpha-2i}}$ inductively as follows. First, we set
$$
\psi_{\alpha-2i}=(\chi_W\chi_V^{-1}\otimes\psi)\oplus \chi_W\otimes S_1\otimes S_{\alpha-2i},
$$
i.e., $\psi_{\alpha-2i}$ differs from $\psi_{\alpha-2(i+1)}$ by shifting the added summand $\chi_W\otimes S_1\otimes S_{\alpha-2i}$ to $\chi_W\otimes S_1\otimes S_{\alpha-2(i+1)}.$ The parameterization of $\pi_{\alpha-2(i+1)}$ does not change from that of $\pi_{\alpha-2i}$ except in the case that $(\chi_W, A_i, B_i, \zeta_i)\in\Jord(\psi)$ where $B_i=\frac{\beta-1}{2}.$ In this case, we change the order $>$ of $\psi_{\alpha-2(i+1)}$ so that the added block is less than the Jordan block corresponding to $(\chi_W, A_i, B_i, \zeta_i).$ If there are multiple such blocks, then we change the order so that the added block is less than all of them. In this case, $\underline{l}$ and $\underline{\eta}$ change according to Definition \ref{defn row exchange}.
\end{recipe}

Baki{\' c} and Hanzer used this recipe to relate the image of the theta correspondence with M{\oe}glin's parameterization of local Arthur packets.

\begin{thm}[{\cite[Theorem A]{BH22}}]\label{thm A BH22}
    Let $\pi\in\Pi_\psi.$ If $\pi_\alpha=\theta_{-\alpha}(\pi)$ and $\pi_{\alpha-2}\neq0,$ then $\theta_{-(\alpha-2)}(\pi)=\pi_{\alpha-2}.$ In particular, $\theta_{-(\alpha-2)}(\pi)\in\Pi_{\psi_{\alpha+2}}.$
\end{thm}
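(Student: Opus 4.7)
My plan is to treat this as the inductive step of a descending induction on $\alpha$, whose base case (for $\alpha\gg 0$) is Theorem \ref{thm Moeglin Adams}. Assume $\pi_\alpha = \theta_{-\alpha}(\pi)$ and $\pi_{\alpha-2}\neq 0$. The goal is to show that the representation $\pi_{\alpha-2}$ built by Recipe \ref{rec pi_alpha} is in fact the theta lift $\theta_{-(\alpha-2)}(\pi)$.

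The main analytic input will be Kudla's filtration, which relates the theta lift one step down in the Witt tower to a piece of the Jacquet module of the theta lift one step up. Concretely, for the maximal parabolic $P = M_1 N_1 \subset H_m$ with Levi $M_1 \cong \GL_1(F) \times H_{m-2}$, the module $r_{P}(\Theta_{-\alpha}(\pi))$ carries a two-step filtration whose top quotient involves $\chi_V|\cdot|^{(\alpha-2)/2} \otimes \Theta_{-(\alpha-2)}(\pi)$. Passing to irreducible quotients and invoking Howe duality together with Frobenius reciprocity (Lemma \ref{lemma Frobenius rec}), I would locate $\theta_{-(\alpha-2)}(\pi)$ inside the $\chi_V|\cdot|^{(\alpha-2)/2}$-isotypic part of $r_{\overline P}(\pi_\alpha)$.

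In parallel, I would compute the same isotypic piece of $r_{\overline P}(\pi_\alpha)$ directly from the M\oe{}glin parameterization $\pi_\alpha = \pi_{>}(\psi_\alpha,\underline{\zeta}',\underline{l}',\underline{\eta}')$ supplied by Theorem \ref{thm Moeglin Adams}. M\oe{}glin's algorithm expresses Jacquet modules of representations in local Arthur packets explicitly in terms of $(\underline{\zeta}',\underline{l}',\underline{\eta}')$; in the generic case, extracting the character $\chi_V|\cdot|^{(\alpha-2)/2}$ corresponds to shrinking the added Jordan block $(\chi_W,1,\alpha)$ of $\psi_\alpha$ down to $(\chi_W,1,\alpha-2)$, leaving the remaining data untouched. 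Matching this with the Kudla-filtration side then identifies $\theta_{-(\alpha-2)}(\pi)$ with the representation produced by Recipe \ref{rec pi_alpha}.

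The technical heart, which I expect to be the principal obstacle, is the boundary case where the newly added summand $\chi_W \otimes S_1 \otimes S_{\alpha-2}$ shares its $B$-value $\tfrac{\alpha-3}{2}$ with an existing Jordan block $(\chi_W, A_i, B_i, \zeta_i) \in \Jord(\psi)$. In that situation the natural order on $\Jord(\psi_{\alpha-2})$ inherited from $\Jord(\psi_\alpha)$ fails to be admissible, and the recipe prescribes reordering so that the new block lies below every such $(\chi_W, A_i, B_i, \zeta_i)$. My strategy is to apply Xu's row exchange operator (Definition \ref{defn row exchange}, Theorem \ref{thm Moeglin row exchange equiv}) to slide the added block into its prescribed position, track the induced modifications of $\underline{l}$ and $\underline{\eta}$ case by case, and verify that these modifications exactly match those dictated by the Jacquet-module calculation above. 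The nonvanishing hypothesis $\pi_{\alpha-2}\neq 0$ is essential at this step, since it guarantees that the constituent produced by the M\oe{}glin side is nonzero, so that Howe duality upgrades the comparison to an actual equality of irreducibles, giving $\theta_{-(\alpha-2)}(\pi) = \pi_{\alpha-2} \in \Pi_{\psi_{\alpha-2}}$.
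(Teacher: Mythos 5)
The paper does not actually prove this statement: it is imported verbatim as Theorem A of Baki\'c--Hanzer \cite{BH22}, so there is no internal argument to compare yours against. Judged on its own terms, your outline does follow the strategy of the cited source --- Kudla's filtration to descend one step in the Witt tower, Frobenius reciprocity (Lemma \ref{lemma Frobenius rec}) and Howe duality to locate the lift inside a Jacquet module, and a comparison with M{\oe}glin's parameterization in which the boundary case is handled by Xu's row exchange (Definition \ref{defn row exchange}, Theorem \ref{thm Moeglin row exchange equiv}) --- which is essentially how \cite{BH22} proceeds.

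As a proof, however, it is only a plan, and the step you defer is precisely the substance of the theorem. You say you ``would'' compute the relevant isotypic part of $r_{\overline{P}}(\pi_\alpha)$ from M{\oe}glin's data and ``verify'' that the row-exchange modifications of $(\underline{l},\underline{\eta})$ prescribed by Recipe \ref{rec pi_alpha} match the Kudla-filtration side; that case-by-case verification is the entire content of the result, and nothing in the sketch shows how the hypothesis $\pi_\alpha=\theta_{-\alpha}(\pi)$ is actually exploited to force the identification. Howe duality gives irreducibility and injectivity of $\theta$, but not that the constituent you locate exhausts the maximal semisimple quotient; one must also control the second (induced) piece of Kudla's filtration and the passage from $\Theta_{-(\alpha-2)}(\pi)$ to $\theta_{-(\alpha-2)}(\pi)$, exactly the points where Lemma \ref{lem Kudla remark} needs its extra hypotheses. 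There are also indexing slips: the character peeled off the added block $\chi_W\otimes S_1\otimes S_\alpha$ is the endpoint exponent $|\cdot|^{(\alpha-1)/2}$, not $|\cdot|^{(\alpha-2)/2}$ (which is not an integer for odd $\alpha$), and on the orthogonal side the twist is by $\chi_W$, as in Lemma \ref{lem Kudla remark}, not $\chi_V$. So the route is the right one, but the argument as written has a genuine gap where the main computation should be.
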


In view of this theorem, we set
\begin{equation}
    d(\pi,\psi):=\min\{\alpha_0\geq 0\ | \ \pi_\alpha\neq 0 \ \mathrm{for \ all} \ \alpha\geq\alpha_0 \}.
\end{equation}

Baki{\' c} and Hanzer used $d(\pi,\psi)$ to study Adams' conjecture (Conjecture \ref{conj Adams}) and hence M{\oe}glin's Question \ref{ques Moeglin}.
\begin{thm}[{\cite[Corollary D]{BH22}}]\label{thm a=d}
    The Adams conjecture fails for $\alpha<d(\pi,\psi)$. Moreover, $a(\pi,\psi)=d(\pi,\psi).$
\end{thm}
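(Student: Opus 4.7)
The plan is to prove the two inequalities $a(\pi,\psi)\leq d(\pi,\psi)$ and $a(\pi,\psi)\geq d(\pi,\psi)$ separately, both driven by Theorem \ref{thm A BH22} as a descent principle along the tower with Theorem \ref{thm Moeglin Adams} supplying the base case at $\alpha\gg 0$.

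For $a(\pi,\psi)\leq d(\pi,\psi)$, I would fix an odd $\alpha\geq d(\pi,\psi)$ and pick $\alpha_1\gg 0$ with $\alpha_1\equiv\alpha\pmod 2$, so that Theorem \ref{thm Moeglin Adams} yields $\theta_{-\alpha_1}(\pi)=\pi_{\alpha_1}\in\Pi_{\psi_{\alpha_1}}$. I would then descend from $\alpha_1$ to $\alpha$ in steps of $2$: at each intermediate value $\alpha'$, the defining property of $d(\pi,\psi)$ forces $\pi_{\alpha'-2}\neq 0$, so Theorem \ref{thm A BH22} applies and propagates the identity $\theta_{-(\alpha'-2)}(\pi)=\pi_{\alpha'-2}$. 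Reaching $\alpha$ produces $\theta_{-\alpha}(\pi)=\pi_\alpha\in\Pi_{\psi_\alpha}$, so $\alpha\in\mathcal{A}(\pi,\psi)$ and the inequality follows. This is the easy direction.

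For the reverse $a(\pi,\psi)\geq d(\pi,\psi)$, I must show that if $\alpha<d(\pi,\psi)$ then Adams fails at $\alpha$. By definition of $d(\pi,\psi)$ there exists $\alpha'$ with $\alpha\leq\alpha'<d(\pi,\psi)$ and $\pi_{\alpha'}=0$. Arguing by contradiction, assume $\theta_{-\alpha}(\pi)\in\Pi_{\psi_\alpha}$; in particular $\theta_{-\alpha}(\pi)\neq 0$, and by the theta tower property the lift $\theta_{-\alpha''}(\pi)$ is nonzero for every $\alpha''\geq\alpha$ of the correct parity. I would then rerun the descent from $\alpha_1\gg 0$, tracking Moeglin's parameterization data $(\underline{\zeta}',\underline{l}',\underline{\eta}')$ through each row-exchange step (Definition \ref{defn row exchange}) applied when the added summand $\chi_W\otimes S_1\otimes S_{\alpha''}$ meets a Jordan block of $\chi_W\chi_V^{-1}\otimes\psi$ with $B_i=(\alpha''-1)/2$. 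The goal is to argue, via a uniqueness statement, that any element of $\Pi_{\psi_{\alpha'}}$ that can arise as $\theta_{-\alpha'}(\pi)$ must coincide with the recipe's $\pi_{\alpha'}$; since $\pi_{\alpha'}=0$, the nonzero $\theta_{-\alpha'}(\pi)$ is forced outside $\Pi_{\psi_{\alpha'}}$, and this failure propagates downward to yield $\theta_{-\alpha}(\pi)\notin\Pi_{\psi_\alpha}$, the desired contradiction.

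The main obstacle will be this uniqueness-of-parameterization step: namely, showing that once $\pi_{\alpha'}=0$, the Moeglin data transported along the tower cannot be re-attached to any other nonzero member of $\Pi_{\psi_{\alpha'}}$ so as to still represent $\theta_{-\alpha'}(\pi)$. Pinning this down requires understanding precisely how the row-exchange operator deforms the pair $(\underline{l}',\underline{\eta}')$ attached to the newly inserted block $\chi_W\otimes S_1\otimes S_{\alpha'}$, and ruling out the existence of a compatible alternative parameterization; this is the analytic heart of Baki\'c and Hanzer's argument and is substantially more intricate than the clean descent used for the first inequality.
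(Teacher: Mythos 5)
This statement is quoted from Baki\'c--Hanzer (\cite[Corollary D]{BH22}); the paper gives no proof of its own, so your attempt can only be judged on its internal merits. Your first inequality, $a(\pi,\psi)\leq d(\pi,\psi)$, is fine: start at $\alpha_1\gg 0$ where Theorem \ref{thm Moeglin Adams} gives $\theta_{-\alpha_1}(\pi)=\pi_{\alpha_1}$, and descend in steps of $2$ using Theorem \ref{thm A BH22}, the hypothesis $\pi_{\alpha'-2}\neq 0$ being supplied by the definition of $d(\pi,\psi)$ as long as you stay at or above it. That is the easy half.

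The second inequality is where the proposal has a genuine gap, and in fact two problems. First, the mechanism you propose --- ``the failure propagates downward'' from the level $\alpha'$ where $\pi_{\alpha'}=0$ to the level $\alpha<\alpha'$ --- is not supported by anything quoted in the paper: Theorem \ref{thm A BH22} only transfers information downward under the hypothesis $\pi_{\alpha'-2}\neq 0$, and knowing $\theta_{-\alpha'}(\pi)\notin\Pi_{\psi_{\alpha'}}$ at some higher level says nothing a priori about membership at lower levels. The correct logical direction is upward: if $\theta_{-\alpha}(\pi)\in\Pi_{\psi_\alpha}$ and one can identify it with the recipe's $\pi_\alpha$, then the ascent statement (Theorem \ref{thm mA well-def}, i.e.\ \cite[Theorem C]{BH22}) forces $\theta_{-\alpha''}(\pi)=\pi_{\alpha''}\neq 0$ for all $\alpha''\geq\alpha$, hence $d(\pi,\psi)\leq\alpha$, which is the contradiction you want. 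Second, even granting that route, the pivotal step is precisely the identification you flag but do not prove: that whenever $\theta_{-\alpha}(\pi)$ lies in $\Pi_{\psi_\alpha}$ it must coincide with the member $\pi_\alpha$ produced by Recipe \ref{rec pi_alpha}, rather than with some other nonzero element of the packet. This is not a formal uniqueness-of-parameterization statement deducible from Definition \ref{defn row exchange}; it is the substantive content of Baki\'c--Hanzer's explicit computation of the theta lift in M{\oe}glin's coordinates (their Theorem B and its proof), which determines $\theta_{-\alpha}(\pi)$ even when the Adams conjecture fails. Without that input, your argument establishes only $a(\pi,\psi)\leq d(\pi,\psi)$ and leaves the statement ``Adams fails for $\alpha<d(\pi,\psi)$'' unproved.
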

Therefore Question \ref{ques Moeglin} becomes the following.
\begin{ques}\label{ques Moeglin d(pi,psi)}
    Is there a way to compute $d(\pi,\psi)$ explicitly?
\end{ques}

When our target tower is $\mathcal{V}^\pm$ we denote $d(\pi,\psi)$ by $d^\pm(\pi,\psi)$ respectively. In the going up tower $\mathcal{V}^+,$ Baki{\'c} and Hanzer answered Question \ref{ques Moeglin d(pi,psi)} in terms of the first occurrence.

\begin{thm}[{\cite[Theorem 2]{BH22}}]\label{thm BH up down}
     Let $\pi\in\Pi_\psi.$ Then the  Adams conjecture is true for any nonzero lift, i.e.,
     $$
     d^+(\pi,\psi)=m^{+,\alpha}(\pi).
     $$
     Moreover, $d^-(\pi,\psi)<d^+(\pi,\psi).$
\end{thm}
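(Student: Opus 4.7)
The plan is to proceed by descending induction on $\alpha$, starting at the stable range where Theorem \ref{thm Moeglin Adams} gives $\pi_\alpha=\theta_{-\alpha}(\pi)\ne 0$ for $\alpha\gg 0$, and using Theorem \ref{thm A BH22} as the inductive step: once $\pi_{\alpha+2}=\theta_{-(\alpha+2)}(\pi)$ and $\pi_\alpha\ne 0$, we automatically have $\pi_\alpha=\theta_{-\alpha}(\pi)$. The entire problem thus reduces to tracking when the Moeglin construction $\pi_\alpha$ defined in Recipe \ref{rec pi_alpha} is nonzero, which is precisely what Xu's nonvanishing algorithm (Algorithm \ref{algo Xu nonvanishing}) detects.

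For the first equality $d^+(\pi,\psi)=m^{+,\alpha}(\pi)$, one direction is immediate: if $\pi^+_\alpha\ne 0$ then by Theorem \ref{thm A BH22} we obtain $\theta^+_{-\alpha}(\pi)=\pi^+_\alpha\ne 0$, so $\alpha\ge m^{+,\alpha}(\pi)$, and hence $d^+(\pi,\psi)\ge m^{+,\alpha}(\pi)$. For the reverse inequality I would descend from $\alpha\gg 0$ down to $\alpha=m^{+,\alpha}(\pi)$ and invoke Xu's algorithm at each step. The added Jordan block $\chi_W\otimes S_1\otimes S_\alpha$ has $B=(\alpha-1)/2$; the integer $l'(\chi_W,1,\alpha)$ is forced to be $0$ by the remark following Theorem \ref{thm Moeglin Adams}, and the sign $\eta'(\chi_W,1,\alpha)$ is determined by the Hasse invariant of the target space in $\mathcal{V}^+$ through the sign condition \eqref{Moeglin sign}. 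The claim is that in the going up tower this choice of $\eta'$, together with the row-exchange adjustments prescribed by Recipe \ref{rec pi_alpha}, passes every obstruction in Xu's algorithm precisely in the range $\alpha\ge m^{+,\alpha}(\pi)$ and fails exactly at $\alpha=m^{+,\alpha}(\pi)-2$.

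For the strict inequality $d^-(\pi,\psi)<d^+(\pi,\psi)$, the conservation relation (Theorem \ref{thm conservation relation}) gives $m^{-,\alpha}(\pi)+m^{+,\alpha}(\pi)=2$, hence under the standing assumption $m^+(\pi)>n+2$ we have $m^{-,\alpha}(\pi)<0$, so $\theta^-_{-(d^+-2)}(\pi)\ne 0$; the content is therefore to exhibit at least one $\alpha<m^{+,\alpha}(\pi)$ for which $\pi^-_\alpha\ne 0$. The key observation is that the sign $\eta'(\chi_W,1,\alpha)$ prescribed in the going down tower is the opposite of the one in the going up tower, because the Hasse invariants of $\mathcal{V}^+$ and $\mathcal{V}^-$ differ. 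This sign flip reverses the outcome of the step in Xu's algorithm that failed at $\alpha=m^{+,\alpha}(\pi)-2$ for the going up tower, producing $\pi^-_{m^{+,\alpha}(\pi)-2}\ne 0$ and hence $d^-(\pi,\psi)\le m^{+,\alpha}(\pi)-2<d^+(\pi,\psi)$.

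The main obstacle is the careful bookkeeping of the parameters $(l',\eta')$ as the added block $\chi_W\otimes S_1\otimes S_\alpha$ is re-ordered with respect to the blocks of $\psi$ via Definition \ref{defn row exchange}. Case 1 with its three subcases governs exactly how $\eta'$ is transformed when the added block is absorbed past a block $(\chi_W,A_i,B_i,\zeta_i)\in\Jord(\psi)$ with $B_i=(\alpha-1)/2$, and matching these sign transformations with the obstruction inequalities in Xu's algorithm is the combinatorial heart of the argument; the stable-range base case of Moeglin plus Theorem \ref{thm A BH22} then automatically propagate the nonvanishing down to the critical values of $\alpha$.
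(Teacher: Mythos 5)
This statement is not proved in the paper at all: Theorem \ref{thm BH up down} is quoted directly from \cite[Theorem 2]{BH22}, so there is no internal argument to compare yours against, and your proposal must be judged as a reconstruction of Baki\'c--Hanzer's proof. The easy half is fine: for $\alpha\geq d^+(\pi,\psi)$ all $\pi_\beta$ with $\beta\geq\alpha$ are nonzero, so descending from the stable range (Theorem \ref{thm Moeglin Adams}) via Theorem \ref{thm A BH22} gives $\theta^+_{-d^+}(\pi)=\pi_{d^+}\neq 0$ and hence $m^{+,\alpha}(\pi)\leq d^+(\pi,\psi)$.

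The rest has genuine gaps. The reverse inequality is precisely the sentence you introduce with ``the claim is'': that the going-up choice of $\eta'(\chi_W,1,\alpha)$ makes Xu's nonvanishing conditions hold exactly for $\alpha\geq m^{+,\alpha}(\pi)$ and fail at $m^{+,\alpha}(\pi)-2$. You offer no mechanism tying the analytic quantity $m^{+,\alpha}(\pi)$ (a first-occurrence index for the theta correspondence) to the combinatorics of Algorithm \ref{algo Xu nonvanishing}, which only sees the data $(\psi_\alpha,\underline{\zeta},\underline{l},\underline{\eta})$; establishing that link is the entire content of the theorem, and in \cite{BH22} it comes from theta-theoretic input (Kudla's filtration, the structure of lifts at and beyond first occurrence, comparison of the two towers), not from running the algorithm against $m^{+,\alpha}(\pi)$. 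The second half inherits this problem and adds two more. First, flipping $\eta'$ does not automatically reverse a failed condition in \eqref{eqn nonvanishing separated}: changing the sign comparison only switches which inequality must hold, and the surviving condition $B_i+l_i\geq A_{i-1}-l_{i-1}$ can fail in both sign cases (this is exactly what happens in the obstruction Lemmas \ref{lemma partial obstruction dual ui dual zeta=-1}--\ref{lemma partial obstruction ui inverse 3' zeta=-1}), so the asserted nonvanishing $\pi^-_{m^{+,\alpha}(\pi)-2}\neq 0$ needs a genuine argument. Second, even granting it, $d^-(\pi,\psi)\leq m^{+,\alpha}(\pi)-2$ requires $\pi^-_\alpha\neq 0$ for \emph{all} $\alpha\geq m^{+,\alpha}(\pi)-2$, and you have not addressed the values $\alpha\geq m^{+,\alpha}(\pi)$ on the going-down tower: nonvanishing of $\theta^-_{-\alpha}(\pi)$ does not imply nonvanishing of the construction $\pi^-_\alpha$ — that implication is the Adams conjecture on the going-down tower, which is exactly what can fail.
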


\begin{rmk}\label{rmk Adams up=down}
    Suppose that $m^+(\pi)= n+2=m^-(\pi).$ As explained in \cite[p. 15]{BH22}, Theorem \ref{thm BH up down} then implies that for any $\psi\in\psi(\pi)$ we have  $d(\pi,\psi)=1$ for either tower and so the Adams conjecture (Conjecture \ref{conj Adams}) holds in full generality, i.e., for any $\alpha\geq 1.$ Note that this implies that the inequalities in Theorem \ref{thm main thm} and \ref{thm main max thm} below are all equalities in this case.
\end{rmk}

Theorem \ref{thm BH up down} implies that $d^+(\pi,\psi)=d^+(\pi,\psi')$ for any $\psi,\psi'\in\Psi(\pi)$. This is not the expectation for the going down tower. Baki{\' c} and Hanzer suspected that $d(\pi,\psi)$ gets smaller as $\psi$ gets more tempered (\cite[p. 5]{BH22}). The main theorem of this paper is to verify that this is indeed the case.

\begin{thm}\label{thm main thm}
    Let $\pi$ be a representation of $G_n$ of Arthur type. Suppose that $T$ is a raising operator and $\pi\in\Pi_\psi\cap\Pi_{T(\psi)}.$ Then
    \begin{equation}
        d^-(\pi,T(\psi))\leq d^-(\pi,\psi).
    \end{equation}
\end{thm}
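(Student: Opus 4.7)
The plan is to establish the statement by downward induction on $\alpha$. By Theorem \ref{thm Moeglin Adams}, for $\alpha \gg 0$ both $\pi_{\alpha}$ and $\pi_{T,\alpha}$ coincide with the nonzero theta lift $\theta^-_{-\alpha}(\pi)$, so the conclusion holds in this range. By definition of $d^-(\pi,\psi)$, we have $\pi_{\alpha-2} \neq 0$ for every $\alpha > d^-(\pi,\psi)$. Assuming inductively that $\pi_{T,\alpha} \neq 0$, it will suffice to deduce $\pi_{T,\alpha-2} \neq 0$; iterating this down to $\alpha - 2 = d^-(\pi,\psi)$ shows that $\pi_{T,\alpha} \neq 0$ for every $\alpha \geq d^-(\pi,\psi)$, which is exactly the desired inequality $d^-(\pi,T(\psi)) \leq d^-(\pi,\psi)$.

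For the inductive step, my strategy is to translate each relevant nonvanishing statement into the combinatorial conditions supplied by Xu's nonvanishing algorithm (Algorithm \ref{algo Xu nonvanishing}). Concretely, the M{\oe}glin data parameterizing $\pi_{T,\alpha-2}$ is obtained from that of $\pi_{\alpha}$ by applying the raising operator $T$ and then decrementing the added block $\chi_W \otimes S_1 \otimes S_\alpha$ to $\chi_W \otimes S_1 \otimes S_{\alpha-2}$, possibly together with the row exchanges prescribed by Recipe \ref{rec pi_alpha}. The inductive hypothesis $\pi_{T,\alpha} \neq 0$ supplies the nonvanishing conditions for the data after applying $T$ but before decrementing, while $\pi_{\alpha-2} \neq 0$ supplies the conditions after decrementing but before applying $T$. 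The aim is to show that these two sets of conditions together force the conditions obtained after performing both operations.

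Since raising operators come in three families, namely $ui_{i,j}^{-1}$, $dual \circ ui_{j,i} \circ dual$, and $dual_k^{-}$, the argument naturally splits into cases, with further subdivision by whether $T$ acts on Jordan blocks of the form $(\chi_W, A, B, \zeta)$ and by the position of the relevant $B_i$ relative to $\frac{\alpha-1}{2}$ (which controls whether the row exchange of Definition \ref{defn row exchange} is triggered). I expect the analysis to organize into precisely the obstruction lemmas foreshadowed in the introduction, namely Lemmas \ref{lemma partial obstruction dual ui dual zeta=-1}, \ref{lemma partial obstruction dual ui dual 3' zeta=-1}, and \ref{lemma partial obstruction ui inverse 3' zeta=-1}, each ruling out one family of potential obstructions to $\pi_{T,\alpha-2} \neq 0$. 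Throughout, Theorem \ref{thm raising operator preserves rep} guarantees that applying $T$ to a valid M{\oe}glin datum for $\pi$ yields another valid datum for the same representation, so the comparison of nonvanishing conditions is well-posed.

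The main obstacle will be the bookkeeping for operators of type 3', where $ui_{i,j}$ omits a summand because $A_i + 1 - B_j = 0$, and for configurations in which the added block $\chi_W \otimes S_1 \otimes S_{\alpha-2}$ becomes adjacent, under the admissible order, to Jordan blocks on which $T$ acts nontrivially. In those cases the row exchange rules (Cases 1(a)--(c), 2, and 3 of Definition \ref{defn row exchange}) modify $\underline{l}$ and $\underline{\eta}$ in ways that are not obviously compatible with the conditions coming from Xu's algorithm. Verifying that the combined hypotheses $\pi_{\alpha-2} \neq 0$ and $\pi_{T,\alpha} \neq 0$ still force the nonvanishing conditions for $\pi_{T,\alpha-2}$ in these degenerate situations, where several pieces of M{\oe}glin's data change simultaneously, is where the delicate case analysis lives.
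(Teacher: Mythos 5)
Your overall strategy coincides with the paper's: descend from $\alpha\gg 0$ and show that $\pi_{\alpha-2}\neq 0$ together with $\pi_{T,\alpha}\neq 0$ forces $\pi_{T,\alpha-2}\neq 0$ by comparing the nonvanishing conditions of Algorithm \ref{algo Xu nonvanishing}; this is exactly the content of Theorem \ref{thm alpha neq 0 implies Talpha neq 0}, from which the inequality follows. But as written the proposal has a genuine gap: the inductive step is only announced (``the aim is to show that these two sets of conditions together force \dots''), and the configuration where you correctly locate the difficulty, namely the added block $\chi_W\otimes S_1\otimes S_{\alpha-2}$ becoming adjacent under the admissible order to Jordan blocks moved by $T$ and triggering the row exchanges of Definition \ref{defn row exchange}, is precisely the case your outline supplies no mechanism for. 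The paper does not verify the conditions directly in that configuration; instead, the obstruction lemmas (Lemmas \ref{lemma partial obstruction dual ui dual zeta=-1}, \ref{lemma partial obstruction dual ui dual 3' zeta=-1}, \ref{lemma partial obstruction ui inverse 3' zeta=-1}) prove that whenever a $T$-affected block has $\zeta_i=-1$ and the added block reaches it (i.e.\ $\frac{\alpha-3}{2}=A_j$, resp.\ $A_i$), then $\pi_{\alpha-2}=0$. Used contrapositively under your standing hypothesis $\pi_{\alpha-2}\neq 0$, this gives $\frac{\alpha-3}{2}>A_j$, so the added block never gets row-exchanged with the $T$-affected blocks and the parameterizations of $\pi_{T,\alpha}$ and $\pi_{T,\alpha-2}$ agree on them. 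Note these lemmas are vanishing statements for the packet of the lower parameter $\psi$, not devices ``ruling out obstructions to $\pi_{T,\alpha-2}\neq 0$'' as you describe; their contrapositive use is the key idea missing from your plan. The case in which $T$ only moves blocks with $\zeta=1$ then requires its own multi-case run through Algorithm \ref{algo Xu nonvanishing} (Lemma \ref{lemma operator on zeta=1}), and even with the obstruction lemmas in hand the general case still needs the case analysis (Cases 1--3) in the proof of Theorem \ref{thm alpha neq 0 implies Talpha neq 0}; none of this substantive work is carried out in the proposal.

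Two further points of bookkeeping that your plan omits. First, Xu's algorithm applies only to good-parity parameters of quasi-split classical groups, so one must first reduce to $\psi\in\Psi_{gp}(G_n)$ (Lemma \ref{lem Red of Adams to gp}, via Kudla's filtration and Theorems \ref{thm red from nu to gp symp}, \ref{thm red from nu to gp orthog}). Second, the paper reduces to the $\rho=\chi_V$ part of the parameter (Lemma \ref{lem red to Chi_W}, Corollary \ref{cor rho=chi_v}), which immediately settles $T=dual_k^-$ with equality (Corollary \ref{cor main theorem for dual_k^-}) because $dual_k^-$ cannot act on $\chi_V$-blocks in good parity for symplectic $G_n$; after this, and using that $ui_{i,j}^{-1}$ not of type 3' equals $dual\circ ui_{j,i}\circ dual$, only two families of raising operators remain. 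Your proposal treats all three families on an equal footing and would have to redo this reduction work inside the case analysis.
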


Theorem \ref{thm main thm} follows from Theorem \ref{thm alpha neq 0 implies Talpha neq 0}. From Theorem \ref{thm psi max min}, we obtain the following immediately.

\begin{thm}\label{thm main max thm}
    Let $\pi$ be a representation of $G_n$ of Arthur type. Then for any $\psi\in\Psi(\pi)$, we have
    \begin{equation}
        d^-(\pi,\psi^{max}(\pi))\leq d^-(\pi,\psi)\leq d^-(\pi,\psi^{min}(\pi)).
    \end{equation}
\end{thm}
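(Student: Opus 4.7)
The plan is to deduce Theorem \ref{thm main max thm} as a direct consequence of Theorem \ref{thm main thm} and Theorem \ref{thm psi max min}, by iterating the former along chains produced by the latter. Fix $\psi\in\Psi(\pi)$. Theorem \ref{thm psi max min} places $\psi$ inside the interval
\[
\psi^{max}(\pi)\geq_O \psi \geq_O \psi^{min}(\pi)
\]
in $\Psi(\pi)$. Unwinding Definition \ref{def operator ordering}, there exist finite sequences of raising operators $(T_l)_{l=1}^{m}$ and $(T_l')_{l=1}^{m'}$ with
\[
\psi^{max}(\pi)=T_1\circ\cdots\circ T_m(\psi), \qquad \psi=T_1'\circ\cdots\circ T_{m'}'(\psi^{min}(\pi)).
\]

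To apply Theorem \ref{thm main thm} at every link, I need to know that $\pi$ belongs to each intermediate local Arthur packet along both chains. This is supplied by Theorem \ref{thm raising operator preserves rep}: starting from M{\oe}glin data parameterizing $\pi$ in $\Pi_\psi$, each raising operator $T_j$ sends the data either to data parameterizing the same $\pi$ in the next packet, or to zero. A vanishing at some intermediate step would prevent the composition from producing $\pi$ again, contradicting $\pi\in\Pi_{\psi^{max}(\pi)}$ at the top of the first chain and $\pi\in\Pi_\psi$ at the top of the second. Hence $\pi$ lies in every intermediate packet.

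With this compatibility in hand, iterating Theorem \ref{thm main thm} along the first chain yields
\[
d^-(\pi,\psi^{max}(\pi))\leq d^-(\pi,T_2\circ\cdots\circ T_m(\psi))\leq\cdots\leq d^-(\pi,\psi),
\]
and the identical reasoning along the second chain gives $d^-(\pi,\psi)\leq d^-(\pi,\psi^{min}(\pi))$. Concatenation yields the claim. The only non-routine point is the propagation of $\pi$ through the intermediate parameters; this is exactly what Theorem \ref{thm raising operator preserves rep} was designed to guarantee, so no ingredient beyond the three theorems already recorded is required.
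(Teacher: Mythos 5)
Your proposal is the paper's own argument: the paper deduces Theorem \ref{thm main max thm} immediately by combining Theorem \ref{thm main thm} with Theorem \ref{thm psi max min}, i.e., by iterating the one-step inequality along the raising-operator chains realizing $\psi^{max}(\pi)\geq_{O}\psi\geq_{O}\psi^{min}(\pi)$, exactly as you do. The one place where your write-up overstates what has been proved is the propagation step: Theorem \ref{thm raising operator preserves rep} only says that when the M{\oe}glin data before and after a \emph{single} raising operator both give nonvanishing representations, those representations coincide; it does not assert that a vanishing at an intermediate step would force the final data to vanish, nor does $\pi\in\Pi_{\psi^{max}(\pi)}$ guarantee that this membership is witnessed by the transported data, so your contradiction argument for membership of $\pi$ in every intermediate packet is not airtight as stated. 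What is actually used (and what the paper relies on implicitly in calling the deduction immediate) is the stronger content of \cite[Theorem 1.16]{HLL22}: the chains realizing $\geq_{O}$ between elements of $\Psi(\pi)$ can be chosen so that $\pi$ lies in each intermediate packet, after which the iteration of Theorem \ref{thm main thm} goes through exactly as you describe.
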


Note that Theorems \ref{thm a=d} and \ref{thm BH up down} show that $a(\pi,\psi)=m^+(\pi)$ on the going up tower. On the going down tower, we only know by Theorem \ref{thm a=d} that $a(\pi,\psi)=d^-(\pi,\psi)$. Furthermore, M{\oe}glin showed there exists a representation $\pi$ of Arthur type which has a theta lift that is not of Arthur type (\cite[Remark 8.1]{Moe11c}). Consequently, in contrast to the going up tower, we do not except that $d^-(\pi,\psi^{max}(\pi))=m^{-,\alpha}(\pi).$ Instead, we expect that $d^-(\pi,\psi^{max}(\pi))$ detects when the theta lift is stably of Arthur type. Let $m_A^\pm(\pi)$ be the minimal $2(1+r)$ such that $\theta_{W_n,V_r^\pm,\psi_F}(\pi)\neq 0$ and $\theta_{W_n,V_{r'}^\pm,\psi_F}(\pi)$ is of Arthur type for any $r'\geq r.$ We remark that $m_A^\pm(\pi)$ is well defined by the following theorem of Baki{\'c} and Hanzer.

\begin{thm}[{\cite[Theorem C]{BH22}}]\label{thm mA well-def} 
Suppose that $\pi\in\Pi_\psi$ for some $\psi\in\Psi(G_n).$ 
    If $\theta_{-\alpha}(\pi)=\pi_{\alpha}\in\Pi_{\psi_\alpha},$ then $\pi_{\alpha+2}=\theta_{-(\alpha+2)}(\pi)\in\Pi_{\psi_{\alpha+2}}.$
\end{thm}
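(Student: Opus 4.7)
The plan is to establish the statement by descending induction from a large parameter value where M{\oe}glin's result applies. I combine Theorem~\ref{thm Moeglin Adams}, which gives $\theta_{-\alpha'}(\pi)=\pi_{\alpha'}\in\Pi_{\psi_{\alpha'}}$ for all sufficiently large odd $\alpha'$, with Theorem~\ref{thm A BH22}, which propagates such an equality from $\beta$ down to $\beta-2$ provided $\pi_{\beta-2}\neq 0$. Applying this downward step iteratively from $\alpha'$ to $\alpha+2$ reduces the theorem to the combinatorial claim that $\pi_\beta\neq 0$ for every odd $\beta$ with $\alpha+2\leq\beta\leq\alpha'$.

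I would then establish the monotonicity claim that $\pi_\beta\neq 0$ implies $\pi_{\beta+2}\neq 0$; iterating this from the hypothesis $\pi_\alpha\neq 0$ supplies the required non-vanishing at every intermediate level. By Recipe~\ref{rec pi_alpha}, the parameterization of $\pi_{\beta+2}$ differs from that of $\pi_\beta$ only in the added Jordan block being shifted from $(\chi_W,1,\beta)$ to $(\chi_W,1,\beta+2)$, which in $(A,B,\zeta)$-coordinates is the shift $((\beta-1)/2,(\beta-1)/2,-1)\mapsto((\beta+1)/2,(\beta+1)/2,-1)$, together with finitely many row exchanges (Definition~\ref{defn row exchange}) needed to restore an admissible ordering whenever $\psi$ contains $\chi_W$-blocks with $B$-coordinate in the affected range. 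Since row exchanges preserve the underlying representation (Theorem~\ref{thm Moeglin row exchange equiv}), non-vanishing may be tested on any admissible ordering; I would verify Xu's non-vanishing criterion (Algorithm~\ref{algo Xu nonvanishing}) on the ordering in which the added block sits at its natural position. Because the added block has $A=B$ and $l=0$, the three sub-cases of Case~1 of Definition~\ref{defn row exchange} collapse into a small number of configurations keyed by the sign $\zeta_i$ and the relative length $A_i-B_i$ of each neighboring block.

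With the monotonicity claim in hand, descending from $\theta_{-\alpha'}(\pi)=\pi_{\alpha'}$ by Theorem~\ref{thm A BH22} supplies $\theta_{-\beta}(\pi)=\pi_\beta$ for every $\beta\in\{\alpha+2,\alpha+4,\ldots,\alpha'\}$, and in particular $\theta_{-(\alpha+2)}(\pi)=\pi_{\alpha+2}$; the containment $\pi_{\alpha+2}\in\Pi_{\psi_{\alpha+2}}$ is automatic from the construction in Recipe~\ref{rec pi_alpha}. The principal obstacle is the monotonicity claim itself: the row-exchange formulas couple $(l,\eta)$ through parity factors like $(-1)^{A_k-B_k}\eta_k$, and one must verify uniformly that enlarging the added block's $B$-coordinate never creates a violation of Xu's $(l,\eta)$-conditions. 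This uniform combinatorial verification is the technical heart of the argument.
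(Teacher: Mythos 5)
You should first note that this paper does not prove Theorem \ref{thm mA well-def} at all: it is imported verbatim from \cite[Theorem C]{BH22} and used as a black box, so there is no internal argument to measure your proposal against; it has to stand on its own as a proof of the cited result, and in its present form it does not.

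The reduction you set up is sound as far as it goes: Theorem \ref{thm Moeglin Adams} gives $\theta_{-\alpha'}(\pi)=\pi_{\alpha'}$ for $\alpha'\gg 0$, Theorem \ref{thm A BH22} propagates the identification downward two steps at a time, and the containment in $\Pi_{\psi_{\alpha+2}}$ is indeed automatic from Theorem \ref{thm Moeglin construction} once $\pi_{\alpha+2}\neq 0$. But this reduces everything to the claim that $\pi_\beta\neq 0$ implies $\pi_{\beta+2}\neq 0$, and that claim is, modulo the results you already invoke, essentially the entire content of the theorem: it is exactly the upward closure of the set of $\alpha$ with $\pi_\alpha\neq 0$, which is what makes $d(\pi,\psi)$ a genuine threshold. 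You explicitly leave it unproved (``the principal obstacle,'' ``the technical heart''), and the sketch you give understates what is involved: the issue is not only the row-exchange bookkeeping of Definition \ref{defn row exchange}, but verifying the conditions \eqref{eqn nonvanishing separated} for $\pi_{\beta+2}$ from those for $\pi_\beta$ after the pull, expand and change-sign steps of Algorithm \ref{algo Xu nonvanishing} in the configurations where $\Jord_{\chi_W}(\psi_\beta)$ contains blocks whose $A$- or $B$-coordinates sit near $\frac{\beta\pm 1}{2}$. That is precisely the kind of multi-case analysis carried out in Lemma \ref{lemma operator on zeta=1} and \S\ref{subsec proof of main thm} for the analogous comparisons, and asserting that the sub-cases ``collapse'' is not a substitute for it. Two further points: the statement allows any $\psi\in\Psi(G_n)$, while Theorem \ref{thm Moeglin Adams} and Recipe \ref{rec pi_alpha} are formulated for good parity, so your argument also needs a reduction in the spirit of Lemma \ref{lem Red of Adams to gp}; and, for what it is worth, the proof in \cite{BH22} does not proceed by such an upward nonvanishing induction but exploits the relation between consecutive lifts in the Witt tower, so even the strategy is untested against the source. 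As it stands the proposal is a plausible plan whose central step is missing.
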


Recall that $\alpha=m-n-1.$ Set $m^{\pm,\alpha}_A(\pi)=m_A^{\pm}(\pi)-n-1.$ We conjecture the following.

\begin{conj}\label{conj stable Arthur}
    Let $\pi\in\Pi(G).$ If $d^\pm(\pi,\psi^{max}(\pi))>1,$ then 
    $$
    d^\pm(\pi,\psi^{max}(\pi))=m_A^{\pm,\alpha}(\pi).
    $$
\end{conj}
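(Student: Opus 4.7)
The plan is to split Conjecture \ref{conj stable Arthur} into the two inequalities $m_A^{\pm,\alpha}(\pi) \leq d^\pm(\pi,\psi^{max}(\pi))$ and its reverse. The first inequality is the direct one and does not actually require the hypothesis $d^\pm(\pi,\psi^{max}(\pi)) > 1$: if $\alpha \geq d^\pm(\pi,\psi^{max}(\pi))$ then by definition the representation $\pi_\alpha$ built from $\psi^{max}(\pi)$ via Recipe \ref{rec pi_alpha} is nonzero, so Theorem \ref{thm A BH22} identifies $\theta_{-\alpha}(\pi) = \pi_\alpha \in \Pi_{(\psi^{max}(\pi))_\alpha}$, meaning the lift is nonzero and of Arthur type. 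Theorem \ref{thm mA well-def} then propagates this upward to all larger $\alpha$, giving $m_A^{\pm,\alpha}(\pi) \leq d^\pm(\pi,\psi^{max}(\pi))$.

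For the reverse inequality, suppose $\alpha_0 \geq m_A^{\pm,\alpha}(\pi)$, so $\theta_{-\alpha_0}(\pi) \in \Pi_{\psi'}$ for some $\psi' \in \Psi^+(H_m)$. The crux is to produce $\tilde\psi \in \Psi(\pi)$ with $\psi' = \tilde\psi_{\alpha_0}$ in the sense of \eqref{eqn psi_alpha}. Once such a $\tilde\psi$ is in hand, Recipe \ref{rec pi_alpha} realizes $\theta_{-\alpha_0}(\pi)$ as the representation $\pi_{\alpha_0}$ computed from $\tilde\psi$, which shows $\pi_{\alpha_0} \neq 0$ and hence $d^\pm(\pi,\tilde\psi) \leq \alpha_0$ via Theorem \ref{thm A BH22} and \ref{thm mA well-def}; Theorem \ref{thm main max thm} then concludes $d^\pm(\pi,\psi^{max}(\pi)) \leq d^\pm(\pi,\tilde\psi) \leq \alpha_0$. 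To extract $\tilde\psi$, I would first compare $L$-parameters: the compatibility of the theta correspondence with the local Langlands correspondence forces $\phi_{\psi'}$ to contain a summand of the form $\chi_W \otimes S_{\alpha_0}$ (up to the appropriate twist), and one expects generically that this lifts to a summand $\chi_W \otimes S_1 \otimes S_{\alpha_0}$ inside $\psi'$. Peeling this summand off and untwisting by $\chi_V \chi_W^{-1}$ would then produce the candidate $\tilde\psi \in \Psi^+(G_n)$.

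The main obstacle is the structural claim just sketched, and it has two intertwined sub-difficulties. First, verifying $\pi \in \Pi_{\tilde\psi}$ amounts to a partial backwards Adams statement, which is delicate because the same representation $\theta_{-\alpha_0}(\pi)$ may lie in several Arthur packets on the target side, and not every peeling descends to an element of $\Psi(\pi)$. Second, when $\alpha_0$ is not genuinely larger than the ``scale'' of $\phi_\pi$, the summand $\chi_W \otimes S_1 \otimes S_{\alpha_0}$ can be absorbed into an existing Jordan block of $\psi$, obstructing a clean peeling-off. The hypothesis $d^\pm(\pi,\psi^{max}(\pi)) > 1$ is precisely what rules out the Adams-trivial range highlighted in Remark \ref{rmk Adams up=down} and the going-up analysis of Theorem \ref{thm BH up down}, ensuring that the new block is genuinely detachable. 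I expect that a careful inductive application of Theorem \ref{thm A BH22} descending in $\alpha$ in the going-down tower, combined with the uniqueness of $\psi^{max}(\pi)$ from Theorem \ref{thm psi max min}, will reduce the structural claim to a combinatorial check on M{\oe}glin's parameterization, to which the same obstruction analysis used in the proof of Theorem \ref{thm main thm intro} can be applied.
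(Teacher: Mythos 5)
The statement you have attempted to prove is stated in the paper as a \emph{conjecture}: the paper contains no proof of it. What the paper does record is (i) that on the going-up tower the equality follows from Theorems \ref{thm BH up down} and \ref{thm mA well-def}, because $m_A^{+,\alpha}(\pi)=m^{+,\alpha}(\pi)$ there, (ii) that the inequality $d^-(\pi,\psi^{max}(\pi))\geq m_A^{-,\alpha}(\pi)$ always holds (this is your first paragraph, and your argument via Theorem \ref{thm a=d}/Theorem \ref{thm A BH22} is correct and matches the paper's remark), and (iii) consistency checks in Examples \ref{exmp stable Arthur} and \ref{exmp supercuspidal}. So the only part of your proposal that is actually established is the part the paper already notes; the reverse inequality on the going-down tower is the genuine content of the conjecture and remains open.

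For that reverse inequality your argument has a real gap, which you partly acknowledge: the ``structural claim'' that $\theta_{-\alpha_0}(\pi)\in\Pi_{\psi'}$ forces $\psi'=\tilde\psi_{\alpha_0}$ for some $\tilde\psi\in\Psi(\pi)$ is a converse-Adams statement at least as strong as the conjecture itself, and nothing in the paper or in \cite{BH22} supplies it. Concretely, two steps fail. First, in the range $\alpha<d^-(\pi,\psi)$ the compatibility of theta with the local Langlands correspondence does not force a detachable summand: even if $\phi_{\psi'}$ contains $\chi_W\otimes S_{\alpha_0}$ (itself unjustified below first-type occurrence, where the lift's Langlands data are computed by other means, as in Example \ref{exmp stable Arthur} via \cite{AG17a,BH21}), the Arthur parameter $\psi'$ need not contain the summand $\chi_W\otimes S_1\otimes S_{\alpha_0}$ --- that $L$-parameter piece could come from $\chi_W\otimes S_{\alpha_0}\otimes S_1$ or be absorbed into a larger Jordan block, and $\theta_{-\alpha_0}(\pi)$ may lie in several packets of different shapes. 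Second, even granting a peeling, you give no mechanism to verify $\pi\in\Pi_{\tilde\psi}$; M{\oe}glin's low-rank failures of the Adams conjecture (and \cite[Remark 8.1]{Moe11c}, showing lifts of Arthur-type representations need not be of Arthur type) are exactly the reason such backwards control is not automatic. Finally, your reading of the hypothesis $d^\pm(\pi,\psi^{max}(\pi))>1$ is off: in the paper its role is not to make the added block ``detachable'' but to exclude the case where the Adams conjecture holds for all $\alpha\geq 1$ while $m_A^{\pm,\alpha}(\pi)$ is a negative odd integer, in which case the asserted equality is simply false (Example \ref{exmp supercuspidal}). In short, your proposal is a reasonable strategy sketch, but it is not a proof, and the paper does not claim one.
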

By Theorems \ref{thm BH up down} and \ref{thm mA well-def}, it follows that in the going up tower, $m_A^{+,\alpha}(\pi)=m^{+,\alpha}(\pi)$ and so the conjecture is true there. However, in the going down tower, it is possible there is a `gap' in the theta lift being of Arthur type, i.e., $m^-(\pi)<m_A^-(\pi).$ We explicate this in Example \ref{exmp stable Arthur}. Also, note that by definition
$$
d^-(\pi,\psi^{max}(\pi))\geq m_A^{-,\alpha}(\pi).
$$

In Example \ref{exmp supercuspidal}, we show the conjecture is false if we drop the condition that $d^\pm(\pi,\psi^{max}(\pi))>1.$ This is not too concerning. Indeed, if $d^\pm(\pi,\psi^{max}(\pi))=1,$ then the Adams conjecture for $\psi^{max}(\pi)$ holds in all the ranks it predicts. If Conjecture \ref{conj stable Arthur} failed, then $m_A^{\pm,\alpha}(\pi)$ must be an odd negative integer which is not controlled by the Adams conjecture. It is thus an interesting question to extend the Adams conjecture for $\alpha<0.$

\subsection{Reduction to good parity}

In this subsection, we reduce Theorem \ref{thm main thm} to the good parity case. The key step in this reduction is given by the following lemma which is a consequence of Kudla's filtration (\cite[Theorem 2.8]{Kud86}).

\begin{lemma}[{\cite[Remark 3.10]{BH22}}]\label{lem Kudla remark}
     Let $\pi\in\Pi(G_n)$, $\pi_0\in\Pi(G_{n-2k}),$ $\alpha=n-m+1,$ and $\sigma$ be a multisegment representation such that the corresponding multisegment does not contain $\frac{\alpha-1}{2}.$ Let $P$ be the maximal parabolic subgroup associated to $\chi_V\sigma\rtimes\Theta_{-\alpha}(\pi_0)$. Furthermore, we assume that
     \begin{enumerate}
         \item $r_{\overline{P}}(\theta_{-\alpha}(\pi))$ has a unique irreducible subquotient on which $\GL_k(F)$ acts by $\chi_W\sigma$; or
         \item $r_{\overline{P}}(\pi)$ has a unique irreducible subquotient on which $\GL_k(F)$ acts by $\chi_V\sigma$.
     \end{enumerate}
     
     If $\chi_V\sigma\rtimes\pi_0\twoheadrightarrow \pi,$ then $\chi_W\sigma\rtimes\theta_{-\alpha}(\pi_0)\rightarrow \theta_{-\alpha}(\pi).$
\end{lemma}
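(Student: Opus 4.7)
The strategy is to translate the surjection $\chi_V\sigma \rtimes \pi_0 \twoheadrightarrow \pi$ into a Jacquet-module statement via Frobenius reciprocity, transport it across the theta correspondence using Kudla's filtration (\cite[Theorem 2.8]{Kud86}), and then translate back using Frobenius reciprocity on the $H_m$-side. By Lemma~\ref{lemma Frobenius rec}(2), the surjection in the hypothesis corresponds (since $\chi_V\sigma \otimes \pi_0$ is irreducible) to an embedding
$$
\chi_V\sigma \otimes \pi_0 \hookrightarrow r_{\overline{P}}(\pi).
$$
Dually, to obtain the desired non-zero map $\chi_W\sigma \rtimes \theta_{-\alpha}(\pi_0) \to \theta_{-\alpha}(\pi)$, it will suffice to produce a non-zero map $\chi_W\sigma \otimes \theta_{-\alpha}(\pi_0) \to r_{\overline{P'}}(\theta_{-\alpha}(\pi))$, where $P' \subset H_m$ is the maximal parabolic with $\GL_k(F)$-factor on the Levi.

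The main tool is Kudla's filtration applied to $r_{\overline{P'}}(\Theta_{-\alpha}(\pi))$: it endows the Jacquet module with a filtration whose successive quotients are built from $\Theta$-lifts of subquotients of the Jacquet modules $r_{\overline{P_j}}(\pi)$ for $0 \leq j \leq k$, each tensored with a $\GL_k(F)$-representation that either matches $\chi_W\sigma$ exactly (in the top piece) or, in every lower piece, forces the character $\chi_W |\cdot|^{(\alpha-1)/2}$ to appear in the cuspidal support of the $\GL_k(F)$-factor. In particular, feeding the embedding above into the top piece $j=k$ yields precisely $\chi_W\sigma \otimes \Theta_{-\alpha}(\pi_0)$ as a subquotient.

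To conclude, I would argue as follows. Since the multisegment of $\sigma$ avoids $\tfrac{\alpha-1}{2}$, the representation $\chi_W\sigma$ cannot occur as a $\GL_k(F)$-subquotient in any non-top layer of Kudla's filtration. Hence any $\GL_k(F)$-isotypic subquotient of $r_{\overline{P'}}(\Theta_{-\alpha}(\pi))$ isomorphic to $\chi_W\sigma$ must originate from the top piece, and the embedding above lifts to give a non-zero $M'$-map $\chi_W\sigma \otimes \Theta_{-\alpha}(\pi_0) \to r_{\overline{P'}}(\Theta_{-\alpha}(\pi))$. Invoking either uniqueness hypothesis — that the $\chi_W\sigma$-subquotient on the $H_m$-side or the $\chi_V\sigma$-subquotient on the $G_n$-side is unique — rules out any spurious cancellation, so that after passing to the maximal semi-simple quotient we still obtain a non-zero map $\chi_W\sigma \otimes \theta_{-\alpha}(\pi_0) \to r_{\overline{P'}}(\theta_{-\alpha}(\pi))$. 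Frobenius reciprocity then produces the required map.

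The principal obstacle is the cuspidal-support analysis of the non-top layers in Kudla's filtration: one must verify that each such layer genuinely forces $\chi_W|\cdot|^{(\alpha-1)/2}$ to appear in the $\GL_k(F)$-factor of every subquotient, so that the avoidance hypothesis on $\sigma$ really does isolate the top piece. The uniqueness hypotheses are then precisely what is needed to descend the isolated-top-piece contribution from $\Theta_{-\alpha}(\pi)$ to $\theta_{-\alpha}(\pi)$ without its vanishing under the passage to the maximal semi-simple quotient.
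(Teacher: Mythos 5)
First, note that the paper does not actually prove this lemma: it is imported verbatim from \cite[Remark 3.10]{BH22} and used as a black box in the proof of Lemma \ref{lem Red of Adams to gp}, so there is no in-paper argument to compare yours against. Your outline does follow the route by which such statements are proved there (and in similar work of Gan--Ichino and Atobe--Gan): convert the surjection $\chi_V\sigma\rtimes\pi_0\twoheadrightarrow\pi$ into a Jacquet-module statement by the second adjunction, analyze the $\GL_k(F)$-isotypic structure of the Jacquet module on the $H$-side via Kudla's filtration, use the hypothesis that the multisegment of $\sigma$ avoids $\frac{\alpha-1}{2}$ to kill the degenerate layers, and use the uniqueness hypotheses to descend from $\Theta_{-\alpha}$ to $\theta_{-\alpha}$. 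So the architecture is right.

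As a proof, however, the decisive steps are asserted rather than carried out, and one of them is stated in a way that would need repair. Kudla's filtration lives on the Jacquet module of the Weil representation, not on $r_{\overline{P'}}(\Theta_{-\alpha}(\pi))$; to get from one to the other you must take $\pi$-coinvariants, which is only right exact, so the induced ``filtration'' on $r_{\overline{P'}}(\Theta_{-\alpha}(\pi))$ has layers that are merely quotients of the coinvariants of Kudla's layers, and the whole argument is normally run at the level of Hom spaces or coinvariants with this bookkeeping (plus the usual standard-versus-opposite parabolic and contragredient/MVW adjustments, which you elide when you match the top layer against your embedding $\chi_V\sigma\otimes\pi_0\hookrightarrow r_{\overline{P}}(\pi)$). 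Second, the statement that every non-top layer forces $\chi_W|\cdot|^{\frac{\alpha-1}{2}}$ into the cuspidal support of the $\GL_k(F)$-factor is exactly the computation the lemma rests on; you flag it as ``the principal obstacle'' but do not do it, so the hypothesis on $\sigma$ is never actually used. Third, the role of hypotheses (1) and (2) is not ``ruling out spurious cancellation'' in a semisimplification: one must show that the nonzero map produced at the level of $\Theta_{-\alpha}$ does not die when passing to the maximal semisimple quotient, i.e.\ one must identify the $\chi_W\sigma$-isotypic (resp.\ $\chi_V\sigma$-isotypic) constituent of the relevant Jacquet module and use its uniqueness to pin the image down on $\theta_{-\alpha}(\pi_0)$; as written, this step is a gesture rather than an argument. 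In short: correct strategy, but the three points above are the content of the proof and remain open in your proposal.
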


As mentioned in \cite[\S1.6]{BH22}, the above lemma reduces the study of Adams' conjecture from the not good parity case to that of good parity. However, the argument also reduces the general local Arthur parameter case, i.e., $\psi\in\Psi^+(G_n)$, to those of good parity. For completeness, we give the argument here.

\begin{lemma}\label{lem Red of Adams to gp}
    Suppose that Theorem \ref{thm main thm} holds for local Arthur parameters of good parity. Then Theorem \ref{thm main thm} holds for general local Arthur parameters.
\end{lemma}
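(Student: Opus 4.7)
The plan is to reduce the general case to the good-parity case using the decomposition of Theorem~\ref{thm red from nu to gp symp}: write $\psi = \psi_{nu,>0}+\psi_{np}+\psi_{gp}+\psi_{np}^\vee+\psi_{nu,>0}^\vee$ and $\pi=\tau_{\psi_{nu,>0}}\times\tau_{\psi_{np}}\rtimes\pi_{gp}$ for the unique $\pi_{gp}\in\Pi_{\psi_{gp}}$, then argue that both $d^-(\pi,\psi)$ and $d^-(\pi,T(\psi))$ agree with their good-parity counterparts, where the hypothesis applies.

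By Definition~\ref{def raising operator}, any raising operator $T$ leaves $\psi_{nu,>0}$ and $\psi_{np}$ unchanged, so
\[T(\psi) = \psi_{nu,>0}+\psi_{np}+T(\psi_{gp})+\psi_{np}^\vee+\psi_{nu,>0}^\vee.\]
Applying Theorem~\ref{thm red from nu to gp symp} to $T(\psi)$ shows that $\pi\in\Pi_{T(\psi)}$ is equivalent to $\pi_{gp}\in\Pi_{T(\psi_{gp})}$, so the hypothesis transfers to the good parity setting.

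Following the reduction from not-good-parity to good-parity indicated in \cite[\S1.6]{BH22}, Lemma~\ref{lem Kudla remark} can be applied inductively to the irreducible components of the multisegment representations $\tau_{\psi_{nu,>0}}$ and $\tau_{\psi_{np}}$ to show that
\[\theta^-_{-\alpha}(\pi) = \tau_{\chi_W\chi_V^{-1}\otimes\psi_{nu,>0}} \times \tau_{\chi_W\chi_V^{-1}\otimes\psi_{np}} \rtimes \theta^-_{-\alpha}(\pi_{gp}),\]
and that this is an irreducible parabolic induction by Theorem~\ref{thm red from nu to gp orthog}. In particular, $\theta^-_{-\alpha}(\pi)\neq 0$ if and only if $\theta^-_{-\alpha}(\pi_{gp})\neq 0$. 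Via Recipe~\ref{rec pi_alpha} and Theorem~\ref{thm A BH22}, this yields
\[d^-(\pi,\psi) = d^-(\pi_{gp},\psi_{gp}) \quad \text{and} \quad d^-(\pi,T(\psi)) = d^-(\pi_{gp},T(\psi_{gp})).\]
Applying the assumed good-parity case of Theorem~\ref{thm main thm} to $\pi_{gp}\in\Pi_{\psi_{gp}}\cap\Pi_{T(\psi_{gp})}$ completes the proof.

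The main obstacle is establishing the product decomposition of $\theta^-_{-\alpha}(\pi)$ above: at each inductive step one must verify the Jacquet-module hypotheses of Lemma~\ref{lem Kudla remark} (ensuring that the segments under consideration avoid $\frac{\alpha-1}{2}$ and that the relevant subquotients appear uniquely) and that the intermediate parabolic inductions remain irreducible, which is afforded by Theorems~\ref{thm red from nu to gp symp} and~\ref{thm red from nu to gp orthog}.
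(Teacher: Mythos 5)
Your setup (decompose $\psi$ via Theorem \ref{thm red from nu to gp symp}, note that raising operators only touch $\psi_{gp}$, and use Lemma \ref{lem Kudla remark} plus irreducibility from Theorem \ref{thm red from nu to gp orthog}) is the same skeleton as the paper's proof, but the way you close the argument has a genuine gap. The quantity $d^-(\pi,\psi)$ is \emph{not} defined by the nonvanishing of the theta lifts $\theta^-_{-\alpha}(\pi)$; it is defined by the nonvanishing of the representations $\pi_\alpha$ of Recipe \ref{rec pi_alpha}, which are built purely from M{\oe}glin's parameterization. On the going-down tower these two notions genuinely diverge: $\theta^-_{-\alpha}(\pi)$ stays nonzero all the way down to the first occurrence $m^{-,\alpha}(\pi)$, which is typically strictly below $d^-(\pi,\psi)$ (this is exactly the failure of the Adams conjecture, cf.\ Theorem \ref{thm BH up down} and Example \ref{exmp stable Arthur}); Theorem \ref{thm A BH22} only transfers information in one direction (from $\pi_{\alpha-2}\neq 0$ to the theta lift, not conversely). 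So your step ``$\theta^-_{-\alpha}(\pi)\neq 0$ iff $\theta^-_{-\alpha}(\pi_{gp})\neq 0$, hence $d^-(\pi,\psi)=d^-(\pi_{gp},\psi_{gp})$'' is a non sequitur. Moreover, the product formula $\theta^-_{-\alpha}(\pi)=\tau_{\chi_W\chi_V^{-1}\otimes\psi_{nu,>0}}\times\tau_{\chi_W\chi_V^{-1}\otimes\psi_{np}}\rtimes\theta^-_{-\alpha}(\pi_{gp})$ for \emph{all} $\alpha$ is itself not justified: Lemma \ref{lem Kudla remark} requires the multisegments to avoid $\frac{\alpha-1}{2}$, which can fail for small $\alpha$, and the irreducibility input from Theorem \ref{thm red from nu to gp orthog} presupposes that $\theta^-_{-\alpha}(\pi_{gp})$ lies in the relevant good-parity packet, which for small $\alpha$ is precisely what may fail.

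The correct (and simpler) way to finish, which is what the paper does, is to apply Lemma \ref{lem Kudla remark} only once, at $\alpha\gg 0$: there Theorem \ref{thm Moeglin Adams} and Frobenius reciprocity verify its hypotheses, and irreducibility from Theorem \ref{thm red from nu to gp orthog} forces $\theta_{-\alpha}(\pi)=\chi_V^{-1}\tau_{\psi_{nu,>0}}\times\chi_V^{-1}\tau_{\psi_{np}}\rtimes\theta_{-\alpha}(\pi_{gp})$, so the M{\oe}glin parameterization of $\pi_\alpha$ agrees on the good-parity part with that of $(\pi_{gp})_\alpha$. From that single identification the equalities $d^-(\pi,\psi)=d^-(\pi_{gp},\psi_{gp})$ and $d^-(\pi,T(\psi))=d^-(\pi_{gp},T(\psi_{gp}))$ follow, because the Recipe \ref{rec pi_alpha} representations for smaller $\alpha$ are (by construction and Theorem \ref{thm red from nu to gp orthog}) irreducible inductions from their good-parity parts and hence vanish exactly when those do; no statement about theta lifts at small $\alpha$ is needed. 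With your write-up amended along these lines the reduction is complete; as it stands, the bridge from theta-lift nonvanishing to the $d$-invariants is the missing piece.
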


\begin{proof}
    Let $\pi$ be a representation of $G_n$ such that $\pi\in\Pi_\psi$ for some $\psi\in\Psi^+(G_n).$ Furthermore assume that $\theta_{-\alpha}(\pi)\neq 0.$ Decompose $\psi$ as in \eqref{eqn psi decomp general}. By Theorem \ref{thm red from nu to gp symp}, we have $\pi=\tau_{\psi_{nu,>0}}\times\tau_{\psi_{np}}\rtimes\pi_{gp}$ for some $\pi_{gp}\in\Pi_{\psi_{gp}}.$ Let $\alpha\gg 0$ such that  $\theta_{-\alpha}(\pi)=\pi_\alpha\in\Pi_{\psi_\alpha}$ and $\theta_{-\alpha}(\pi_{gp})=(\pi_{gp})_\alpha\in\Pi_{\psi_\alpha}$ by Theorem \ref{thm Moeglin Adams} and also the multisegments corresponding to $\tau_{\psi_{nu,>0}}\times\tau_{\psi_{np}}$ do not contain $\frac{\alpha-1}{2}$. By Theorem \ref{thm red from nu to gp orthog},  we have $\pi_\alpha=\chi_V\inv\tau_{\psi_{nu,>0}}\times\chi_V\inv\tau_{\psi_{np}}\rtimes\pi'_{gp}$ for some $\pi'_{gp}\in\Pi_{(\psi_{\alpha})_{gp}}.$ By Frobenius reciprocity (Lemma \ref{lemma Frobenius rec}), we have that Condition (1) of Lemma \ref{lem Kudla remark} holds. Thus we have a nonzero map
    $$
    \chi_V\inv\tau_{\psi_{nu,>0}}\times\chi_V\inv\tau_{\psi_{np}}\rtimes\theta_{-\alpha}(\pi_{gp})\rightarrow \theta_{-\alpha}(\pi).
    $$
    From Theorem \ref{thm red from nu to gp orthog}, $\chi_V\inv\tau_{\psi_{nu,>0}}\times\chi_V\inv\tau_{\psi_{np}}\rtimes\theta_{-\alpha}(\pi_{gp})$ is irreducible and hence 
    \begin{align*}
        \chi_V\inv\tau_{\psi_{nu,>0}}\times\chi_V\inv\tau_{\psi_{np}}\rtimes\theta_{-\alpha}(\pi_{gp})= \theta_{-\alpha}(\pi).
    \end{align*}
    Hence the parameterization of $\pi_\alpha$ in $\Pi_{\psi_\alpha}$ is exactly the parameterization of $(\pi_{gp})_\alpha$ in $\Pi_{(\psi_\alpha)_{gp}}$. The lemma then follows directly from the definition of raising operators (Definition \ref{def raising operator}) and the assumption that Theorem \ref{thm main thm} holds in the good parity case.
\end{proof}

\section{Xu's nonvanishing algorithm}\label{sec Xu nonvanish}

In this section, we explain Xu's nonvanishing algorithm. First, we recall the relevant terms and operators involved in the algorithm.

\begin{defn}[{\cite[\S2]{Xu21}}]
    Let $\psi$ be a local Arthur parameter of good parity and fix $\rho$ to be a self-dual irreducible unitary supercuspidal representation of $\GL_d(F)$ for some $d\in\mathbb{Z}_{\geq 1}.$ 
    \begin{enumerate}
        \item Let $(\rho,A,B,\zeta)\in\Jord_\rho(\psi)$ and $r\in\mathbb{Z}_{\geq 1}.$ We say $(\rho,A,B,\zeta)$ (or $[A,B]$ for brevity) is far away (of level $r$) from a subset $J\subseteq\Jord_\rho(\psi)$ if
        $$
        B>2^{r|J|}\left(\sum_{(\rho,A',B',\zeta')\in J} \left( A'+|J|\right) +\sum_{(\rho,A',B',\zeta')\in\Jord_\rho(\psi)}\left( A'-B'+1 \right)\right).
        $$
        In this case we write
        $$
        (\rho,A,B,\zeta)\gg_r J.
        $$
        \item Let $J\subseteq\Jord_\rho(\psi)$ and $J^c=\Jord_\rho(\psi)\setminus J.$ We say $J$ is separated from $J^c$ if the following conditions hold.
        \begin{enumerate}
            \item For any $(\rho,A,B,\zeta)\in J$ and $(\rho,A',B',\zeta')\in J^c,$ we have either
            $$
            B>A' \ \mathrm{or} \ B'>A.
            $$
            \item For any admissible order on $J,$ there exists a dominating set of Jordan blocks $J_\gg$ of $J$ with discrete diagonal restriction, such that for any $(\rho,A,B,\zeta)\in J$ and $(\rho,A',B',\zeta')\in J^c,$ if $B'>A$, then $B'>A_\gg.$
            \item There exists admissible order on $J^c$ such that there exists a dominating set of Jordan blocks $J_\gg^c$ of $J^c$ with discrete diagonal restriction, such that for any $(\rho,A,B,\zeta)\in J$ and $(\rho,A',B',\zeta')\in J^c,$ if $B>A'$, then $B>A'_\gg.$
        \end{enumerate}
    \end{enumerate}
\end{defn}

The following definition will be the end result of apply Xu's nonvanishing algorithm.

\begin{defn}[{\cite[\S5]{Xu21}}]
    Let $\psi$ be a local Arthur parameter and suppose that we can index $\Jord_\rho(\psi)$ for each $\rho$ such that $A_i\geq A_{i-1}$ and $B_i\geq B_{i-1}.$ We say that $\psi$ is in the generalized basic case if, for each $\rho$, $\Jord_\rho(\psi)$ can be divided into subsets
    \begin{equation}\label{eqn pair in generalized basic}
        \{(\rho,A_i,B_i,\zeta_i),(\rho,A_{i-1},B_{i-1},\zeta_{i-1})\} \ \mathrm{such} \ \mathrm{that} \ \zeta_i=\zeta_{i-1},
    \end{equation}
    or
    \begin{equation}
        \{(\rho,A_i,B_i,\zeta_i)\}, 
    \end{equation}
    such that each subset is separated from the others.

    In general, if a subset of $\Jord_\rho(\psi)$ satisfies the conditions to be in the generalized basic case, then we say that this subset is in good shape.
\end{defn}

Xu showed that if $\psi$ is in the generalized basic case, then the nonvanishing of $\pi_>(\psi,\underline{\zeta},\underline{l},\underline{\eta})$ is determined purely combinatorially.

\begin{prop}[{\cite[Proposition 5.3]{Xu21}}]\label{prop generalized basic case}
    Let $\psi$ be in the generalized basic case. Then $\pi_>(\psi,\underline{\zeta},\underline{l},\underline{\eta})\neq 0$ if and only if for any pair $(\rho,A_i,B_i,\zeta_i),(\rho,A_{i-1},B_{i-1},\zeta_{i-1})$ in \eqref{eqn pair in generalized basic}, we have
    \begin{equation}\label{eqn nonvanishing separated}
\left\{
    \begin{array}{ll}
        \mathrm{if} \ \eta_i=(-1)^{A_{i-1}-B_{i-1}}\eta_{i-1}, & \mathrm{then} \ A_i-l_i\geq A_{i-1}-l_{i-1} \\
        & \mathrm{and} \ B_i+l_i\geq B_{i-1}+l_{i-1},\\
        \mathrm{if} \ \eta_i\neq(-1)^{A_{i-1}-B_{i-1}}\eta_{i-1}, & \mathrm{then} \ B_i+l_i\geq A_{i-1}-l_{i-1}. 
    \end{array}
\right.
\end{equation}
\end{prop}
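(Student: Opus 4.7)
The overall strategy is to reduce, via the separation hypothesis, to the case of a single subset in the decomposition of $\Jord_\rho(\psi)$, and then to dominate that subset by Jordan blocks with discrete diagonal restriction (DDR), where M{\oe}glin's explicit construction already gives a closed-form nonvanishing criterion. Translating back through row exchanges produces the inequalities in~\eqref{eqn nonvanishing separated}.

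The first step is the separation reduction. For each $\rho$, write $\Jord_\rho(\psi) = \bigsqcup_s S_s$ as the partition into pairs/singletons in the generalized basic case. Since each $S_s$ is separated from its complement, we may choose dominating blocks $S_{s,\gg}$ of $S_s$ with DDR such that the combined set $\bigsqcup_s S_{s,\gg}$ still satisfies an admissible ordering compatible with $>_\psi$, and such that far-away domination on the complement does not interact with $S_s$. Applying the row-exchange invariance (Theorem~\ref{thm Moeglin row exchange equiv}) to bring each $S_s$ to an independent position and the large-domination formulas of Xu~\cite[\S4-5]{Xu21} (or, equivalently, the fact that nonvanishing localizes to each separated subset once the parameter is dominated far enough), the problem for $(\psi,\underline{\zeta},\underline{l},\underline{\eta})$ reduces to checking the nonvanishing of $\pi_>$ on each subset $S_s$ individually.

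The second step is the local analysis on each $S_s$. Singletons $\{(\rho,A,B,\zeta)\}$ contribute no obstruction: the admissible range $0 \le l \le \lfloor d/2 \rfloor$ and the sign condition~\eqref{Moeglin sign} are exactly what the M{\oe}glin construction requires for a single Jordan block, and such blocks always produce a nonzero representation after the separation has been arranged. For a pair $\{(\rho,A_i,B_i,\zeta_i),(\rho,A_{i-1},B_{i-1},\zeta_{i-1})\}$ with $\zeta_i=\zeta_{i-1}$, choose $T_i,T_{i-1}\in\Z_{\ge 0}$ so that the dominated pair
$$\{(\rho,A_i+T_i,B_i+T_i,\zeta_i),(\rho,A_{i-1}+T_{i-1},B_{i-1}+T_{i-1},\zeta_{i-1})\}$$
has DDR, i.e., the intervals $[B+T,A+T]$ are disjoint. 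For pairs with DDR, M{\oe}glin's construction (\cite{Moe06a,Moe11a}; cf.\ \cite[\S3]{Xu21}) gives the explicit nonvanishing criterion in terms of the transported parameters $(\underline{l}^\gg,\underline{\eta}^\gg)$, namely the parity-compatibility between $\eta_i^\gg$ and $\eta_{i-1}^\gg$ together with an inequality on the $l$-values.

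The third and most technical step is to translate the DDR nonvanishing criterion back to the original $(\underline{l},\underline{\eta})$ using the row-exchange transformation $S^\pm$ from Definition~\ref{defn row exchange}. Here one applies the sequence of moves that pushes each block $(\rho,A,B,\zeta)$ past its dominated copy $(\rho,A+T,B+T,\zeta)$, using Cases~1(a)--(c) of Definition~\ref{defn row exchange}. The dichotomy in~\eqref{eqn nonvanishing separated} arises directly from the dichotomy $\eta_i \overset{?}{=}(-1)^{A_{i-1}-B_{i-1}}\eta_{i-1}$: Case~1(a) produces the ``crossed'' inequality $B_i+l_i\ge A_{i-1}-l_{i-1}$, while Cases~1(b)/(c) produce the two inequalities $A_i-l_i\ge A_{i-1}-l_{i-1}$ and $B_i+l_i\ge B_{i-1}+l_{i-1}$. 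The main obstacle is precisely this bookkeeping: one must check that the combinatorial constraint produced in the DDR case, after being transported through all relevant $S^\pm$ moves, is exactly~\eqref{eqn nonvanishing separated} without spurious conditions, and that conversely any $(\underline{l},\underline{\eta})$ satisfying~\eqref{eqn nonvanishing separated} can be realized as an image of a nonzero parameter on the dominated side. Once this is carried out case by case, the equivalence follows, completing the proof.
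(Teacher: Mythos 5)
The paper does not prove this proposition at all: it is imported verbatim from \cite[Proposition 5.3]{Xu21}, so there is no internal proof to compare against, and what you are really attempting is a reconstruction of Xu's argument. Your first two steps do match the broad shape of that argument (use the separation hypothesis to localize the nonvanishing question to each pair or singleton, then dominate a pair by Jordan blocks with discrete diagonal restriction, where the criterion is explicit), and your remark that singletons impose no condition is correct.

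The genuine gap is in your third step, the ``translation back'' from the dominated parameter to the original one. In M{\oe}glin's construction, $\pi_>(\psi,\underline{\zeta},\underline{l},\underline{\eta})$ for a non-DDR $\psi$ is \emph{defined} as an iterated Jacquet module ($\mathrm{Jac}$-type functors) of the representation attached to a dominating parameter $\psi_\gg$ in which each block $(\rho,A_i,B_i,\zeta_i)$ is replaced by $(\rho,A_i+T_i,B_i+T_i,\zeta_i)$; the inequalities in \eqref{eqn nonvanishing separated} are exactly the conditions under which those successive Jacquet modules do not kill the representation, and establishing this is the hard combinatorial content of Xu's basic-case analysis. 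The row-exchange transformation $S^{\pm}$ of Definition \ref{defn row exchange} is a different tool: it relates the parametrizations of the \emph{same} representation under a change of admissible order on a \emph{fixed} parameter (Theorem \ref{thm Moeglin row exchange equiv}); it does not relate $\psi$ to $\psi_\gg$, and one cannot ``push a block past its dominated copy,'' since the block and its shifted copy never coexist in one parameter. Consequently your claim that the dichotomy in \eqref{eqn nonvanishing separated} ``arises directly from'' Cases 1(a)--(c) of the row-exchange formulas is unsupported and the derivation would not go through as written. Relatedly, your step one asserts that nonvanishing localizes to each separated subset by appeal to ``the large-domination formulas of Xu,'' but that localization statement is itself part of what \cite[\S5]{Xu21} proves via the same Jacquet-module analysis, so as it stands the argument is circular at that point as well; the proof needs the vanishing/nonvanishing bookkeeping for the $\mathrm{Jac}$ functors, not the $S^{\pm}$ bookkeeping.
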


Next we describe the `pull' operator. The idea is to `pull' (or shift) a pair of Jordan blocks away from the others so that it becomes separated. There are two settings in which we define the pull operator depending on when the Jordan blocks are of unequal length or not.

Let $\psi$ be a local Arthur parameter of good parity, $>_\psi$ be an admissible order, and fix a self-dual irreducible unitary supercuspidal representation $\rho$ of $\GL_d(F).$ Index $\Jord_\rho(\psi)$ such that $(\rho,A_i,B_i,\zeta_i)>(\rho,A_{i-1},B_{i-1},\zeta_{i-1}).$ Suppose there exists $n$ such that for $i>n$, we have
$$
(\rho,A_i,B_i,\zeta_i)\gg_1 \bigcup_{j=1}^n\{(\rho,A_j,B_j,\zeta_j)\}.
$$
Moreover, we assume that 
$$[A_{n-1},B_{n-1}]\subsetneq [A_n,B_n] \ \mathrm{and} \ \zeta_{n-1}=\zeta_n.$$
This is the condition of unequal length. Let $>'$ denote the order obtained from $>$ by switching $n$ and $n-1.$ Note that this is still an admissible order. Let $S^+(\underline{\zeta},\underline{l},\underline{\eta})$ denote the corresponding effect of this switch (see Definition \ref{defn row exchange}). Define $\psi_-$ by
$$
\Jord(\psi)=\Jord(\psi)\setminus\{(\rho,A_n,B_n,\zeta_n),(\rho,A_{n-1},B_{n-1},\zeta_{n-1})\}
$$
and define $(\underline{\zeta}_-,\underline{l}_-, \underline{\eta}_-)$ to be the restriction of $(\underline{\zeta},\underline{l}, \underline{\eta})$ to $\Jord(\psi_-).$ Given arbitrary Jordan blocks $(\rho, A,B,\zeta),(\rho, A',B',\zeta')$ along with corresponding functions $(l,\eta)$ and $(l',\eta')$, we define a (possibly vanishing) representation
\[\pi_>(\psi_-,\underline{\zeta}_-,\underline{l}_-,\underline{\eta}_-;(\rho, A,B,\zeta,l,\eta),(\rho, A',B',\zeta',l',\eta'))\]
to be the representation defined by M{\oe}glin in Theorem \ref{thm Moeglin construction} which is obtained by replacing the Jordan blocks $(\rho,A_n,B_n,\zeta_n),(\rho,A_{n-1},B_{n-1},\zeta_{n-1})$ in $\Jord(\psi)$ with $(\rho, A,B,\zeta),(\rho, A',B',\zeta')$, respectively. Note that this ordering may no longer be admissible, but in the cases we consider below it always will be.

Xu showed that in the above setting the nonvanishing of the representation depends on the nonvanishing of $3$ related representations which are defined to be `pull' of the Jordan blocks $(\rho,A_n,B_n,\zeta_n),(\rho,A_{n-1},B_{n-1},\zeta_{n-1}).$

\begin{prop}[{\cite[Proposition 7.1]{Xu21}}]\label{prop pull unequal case}
    Let $\psi$ and $\psi_-$ be as above. Then $\pi_>(\psi,\underline{\zeta},\underline{l},\underline{\eta})\neq 0$ if the following hold:
    \begin{enumerate}
        \item We have \begin{align*}
            \pi_>&(\psi_-,\underline{\zeta}_-,\underline{l}_-,\underline{\eta}_-;(\rho, A_n+T_n,B_n+T_n,\zeta_n,l_n,\eta_n), \\&(\rho, A_{n-1}+T_{n-1},B_{n-1}+T_{n-1},\zeta_{n-1},l_{n-1},\eta_{n-1}))\neq 0
        \end{align*}
        for some nonnegative integers $T_n, T_{n-1}$ such that 
        $$
        [A_{n-1}+T_{n-1},B_{n-1}+T_{n-1}]\subsetneq[A_n+T_n,B_n+T_n]
        $$
        and for any $i>n$, we have $(\rho,A_i,B_i,\zeta_i)\gg_1(\rho, A_n+T_n,B_n+T_n,\zeta).$\item We have \begin{align*}
            \pi_>&(\psi_-,\underline{\zeta}_-,\underline{l}_-,\underline{\eta}_-;(\rho, A_n+T_n,B_n+T_n,\zeta_n,l_n,\eta_n), \\&(\rho, A_{n-1},B_{n-1},\zeta_{n-1},l_{n-1},\eta_{n-1}))\neq 0,
        \end{align*}
        for some nonnegative integer $T_n$ such that 
        for any $i>n$, we have that $B_i> A_n+T_n.$
        \item We have \begin{align*}
            \pi_{>'}&(\psi_-,\underline{\zeta}'_-,\underline{l}'_-,\underline{\eta}'_-;(\rho, A_n,B_n,\zeta'_n,l'_n,\eta'_n), \\&(\rho, A_{n-1}+T_{n-1},B_{n-1}+T_{n-1},\zeta'_{n-1},l'_{n-1},\eta'_{n-1}))\neq 0,
        \end{align*}
        for some nonnegative integer $T_{n-1}$ such that 
        for any $i>n$, we have $B_i>A_{i-1}+T_{i-1}$ and $(\underline{\zeta}',\underline{l}',\underline{\eta}')=S^+(\underline{\zeta},\underline{l},\underline{\eta})$ (see Definition \ref{defn row exchange}).
    \end{enumerate}
    Conversely, if $\pi_>(\psi,\underline{\zeta},\underline{l},\underline{\eta})\neq 0$, then (1), (2), and (3) hold with ``for some'' replaced with ``for all.''
\end{prop}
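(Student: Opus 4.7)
My plan is to reduce the nonvanishing problem for $\pi_>(\psi,\underline{\zeta},\underline{l},\underline{\eta})$ to cases in which the top pair of Jordan blocks can be treated via the generalized basic case (Proposition \ref{prop generalized basic case}), by systematically ``pulling'' Jordan blocks apart. The fundamental observation is that in M{\oe}glin's construction, the representation $\pi_>(\psi,\underline{\zeta},\underline{l},\underline{\eta})$ is built by iteratively applying certain Jacquet-module--level operations (derivatives and partial Jacquet restrictions) indexed by the Jordan blocks of $\psi$. When a Jordan block $(\rho,A,B,\zeta)$ satisfies $B \gg 0$ relative to the other blocks, its contribution decouples in the sense that the relevant derivatives can be computed independently. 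This decoupling principle is the source of the equivalence between shifted (pulled) and unshifted representations under the stated conditions.

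First, I would verify that each of the three conditions is well-defined: this amounts to checking that the shifted Jordan configurations listed in (1), (2), (3) sit in admissible orders and respect the far-away conditions, so that $\pi_>(\psi_-, \dots ; \dots)$ makes sense as an output of M{\oe}glin's construction. For (3), the change of order from $>$ to $>'$ requires Theorem \ref{thm Moeglin row exchange equiv} on the row exchange operator to identify the switched datum $(\underline{\zeta}',\underline{l}',\underline{\eta}') = S^+(\underline{\zeta},\underline{l},\underline{\eta})$ as giving the same representation. Next, for the forward implication I would treat the three shift choices as three ways of moving the pair $\{(\rho,A_n,B_n,\zeta_n),(\rho,A_{n-1},B_{n-1},\zeta_{n-1})\}$ to a region where it is separated from $\Jord_\rho(\psi_-)$ in one of the senses permitted by the definition of separated sets, and then invoke Proposition \ref{prop generalized basic case} together with the compatibility of shifts with $(\underline{l},\underline{\eta})$.

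The core technical step, and the main obstacle, will be proving that shifting the pair by $(T_n,T_{n-1})$ preserves the nonvanishing status of the representation provided the shifted configuration remains admissible and satisfies the appropriate far-away conditions. The strategy is to exhibit an explicit intertwining between Jacquet modules of the shifted and unshifted representations, using that the extremal derivatives (those removing the $\rho|\cdot|^{B_n}, \rho|\cdot|^{B_n+1}, \dots, \rho|\cdot|^{B_n+T_n}$-isotypic components) are nonzero and injective under the far-away hypothesis. This forces the nonvanishing of the shifted representation to be equivalent to that of the original. The subtlety lies in handling case (2), where only the outer block is pulled and the inner block $[A_{n-1},B_{n-1}]$ stays put: here the interaction of the pulled block with the inner one must be tracked carefully through the row-exchange formulas, since the containment relation $[A_{n-1},B_{n-1}] \subsetneq [A_n,B_n]$ is broken after sufficient shifting.

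Finally, the converse direction (``for all'' version) would follow by noting that the forward argument produces, for any sufficiently large choice of $(T_n,T_{n-1})$, a nonvanishing shifted representation; since the Jacquet-module argument is reversible once the far-away conditions hold, nonvanishing of the original propagates to every admissible shift. I would then combine the three cases: they correspond to the three possible positions the pair can occupy after pulling (both outside, outer outside with inner in place, or inner outside after a row exchange), and each gives access to a different regime of the generalized basic case. The entire argument is combinatorial modulo the one analytic input about derivatives controlling Jacquet modules of M{\oe}glin's representations, which I would treat as a black box from \cite{Xu17, Xu21}.
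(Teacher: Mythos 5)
First, note that the paper does not prove this statement at all: it is imported verbatim from \cite[Proposition 7.1]{Xu21}, so there is no internal proof to compare against; your proposal is effectively an attempt at reconstructing Xu's argument. Your general toolkit is the right one (domination of Jordan blocks by shifts, Jacquet-functor relations between the shifted and unshifted data, the row exchange $S^+$ via Theorem \ref{thm Moeglin row exchange equiv} for condition (3), and reduction toward the separated/generalized basic case), and the ``for all'' converse is indeed within reach of what you sketch, since the original representation is obtained from any dominating (pulled) configuration by an explicit composition of Jacquet functors, so its nonvanishing propagates to every admissible pull.

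The genuine gap is in your core claim that shifting the pair by $(T_n,T_{n-1})$ ``preserves the nonvanishing status,'' i.e.\ that each individual pulled representation is nonzero \emph{if and only if} the original is. That is too strong and is false in general: the original is a Jacquet retract of the pulled representation, and applying the Jacquet operations that undo the shift can kill a nonzero representation, because after pulling the pair no longer interacts with the remaining blocks of $\psi_-$ (precisely the interactions that the shift removes can force vanishing of the original). This is why the proposition requires the conjunction of all three pulls (1), (2), (3) for sufficiency, rather than any single one; if your equivalence held, each condition alone would characterize nonvanishing and the statement would be redundant. Relatedly, your closing paragraph treats the three conditions as three alternative ``regimes,'' which misreads the logical structure: they are jointly necessary hypotheses, and the substantive content of Xu's proof is exactly the hard direction that nonvanishing of all three pulled representations forces the relevant Jacquet coefficient of the original to survive. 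An argument by ``injectivity of extremal derivatives under the far-away hypothesis'' does not supply this; one needs the finer analysis of the Jacquet modules of M{\oe}glin's representations (as in \cite{Xu17,Xu21}) that plays the three pulls off against each other, and that step is missing from your proposal.
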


Each of the resulting parameterizations in (1), (2), and (3) above are called the pull of $(\rho,A_n,B_n,\zeta_n),(\rho,A_{n-1},B_{n-1},\zeta_{n-1}).$

Next we define the pull operator in the case of equal length. Let $\psi$ be a local Arthur parameter of good parity, $>_\psi$ be an admissible order, and fix a self-dual irreducible unitary supercuspidal representation $\rho$ of $\GL_d(F).$ Index $\Jord_\rho(\psi)$ such that $(\rho,A_i,B_i,\zeta_i)>(\rho,A_{i-1},B_{i-1},\zeta_{i-1}).$ Suppose there exists $n$ such that for $i>n$, we have
$$
(\rho,A_i,B_i,\zeta_i)\gg_1 \bigcup_{j=1}^n\{(\rho,A_j,B_j,\zeta_j)\}.
$$
Moreover, we assume that 
$$[A_{n-1},B_{n-1}]= [A_n,B_n] \ \mathrm{and} \ \zeta_{n-1}=\zeta_n.$$
This is the condition of equal length. Define $\psi_-$ by
$$
\Jord(\psi_-)=\Jord(\psi)\setminus\{(\rho,A_n,B_n,\zeta_n),(\rho,A_{n-1},B_{n-1},\zeta_{n-1})\}
$$
and define $(\underline{\zeta}_-,\underline{l}_-, \underline{\eta}_-)$ to be the restriction of $(\underline{\zeta},\underline{l}, \underline{\eta})$ to $\Jord(\psi_-).$

Again, Xu showed that in the above setting the nonvanishing of the representation depends on the nonvanishing of $2$ related representations which are defined to be `pull' of the Jordan blocks $(\rho,A_n,B_n,\zeta_n),(\rho,A_{n-1},B_{n-1},\zeta_{n-1}).$

\begin{prop}[{\cite[Proposition 7.3]{Xu21}}]\label{prop pull equal case}
    Let $\psi$ and $\psi_-$ be as above. Then $\pi_>(\psi,\underline{\zeta},\underline{l},\underline{\eta})\neq 0$ if the following hold:
    \begin{enumerate}
        \item We have \begin{align*}
            \pi_>&(\psi_-,\underline{\zeta}_-,\underline{l}_-,\underline{\eta}_-;(\rho, A_n+T_n,B_n+T_n,\zeta_n,l_n,\eta_n), \\&(\rho, A_{n-1}+T_{n-1},B_{n-1}+T_{n-1},\zeta_{n-1},l_{n-1},\eta_{n-1}))\neq 0
        \end{align*}
        for some nonnegative integer $T_n=T_{n-1}$ such that 
        for any $i>n$, we have $(\rho,A_i,B_i,\zeta_i)\gg_1(\rho, A_n+T_n,B_n+T_n,\zeta).$\item We have \begin{align*}
            \pi_>&(\psi_-,\underline{\zeta}_-,\underline{l}_-,\underline{\eta}_-;(\rho, A_n+T_n,B_n+T_n,\zeta_n,l_n,\eta_n), \\&(\rho, A_{n-1},B_{n-1},\zeta_{n-1},l_{n-1},\eta_{n-1}))\neq 0
        \end{align*}
        for some nonnegative integer $T_n$ such that 
        for any $i>n$, we have that $B_i> A_n+T_n.$
    \end{enumerate}
    Conversely, if $\pi_>(\psi,\underline{\zeta},\underline{l},\underline{\eta})\neq 0$, then (1) and (2) hold with ``for some'' replaced with ``for all.''
\end{prop}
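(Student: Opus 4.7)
The plan is to mirror the strategy of Proposition \ref{prop pull unequal case} with the simplification that the equal-length configuration $[A_{n-1},B_{n-1}]=[A_n,B_n]$, $\zeta_{n-1}=\zeta_n$ kills the need for the third ``row-exchange'' pull. The overall goal is to reduce the nonvanishing of $\pi_>(\psi,\underline{\zeta},\underline{l},\underline{\eta})$ to a configuration where Proposition \ref{prop generalized basic case} applies directly, since once the top pair is pulled sufficiently far above the rest, the pair $\{(\rho,A_n,B_n,\zeta_n),(\rho,A_{n-1},B_{n-1},\zeta_{n-1})\}$ becomes separated from $\psi_-$ and is in good shape.

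First I would observe that in M{\oe}glin's inductive construction, shifting a Jordan block $(\rho,A,B,\zeta)$ up to $(\rho,A+T,B+T,\zeta)$ while preserving $(l,\eta)$ corresponds to an intermediate parabolic induction by segments of the form $\Delta_\rho[\zeta(B+1),\zeta(B+T)]$ inside a larger discrete-diagonal-restriction dominating parameter, followed by a matching Jacquet functor that returns the original representation. This reversibility already implies one of the implications: if the pulled parameterization is nonzero, then so is the original. The two pulls in the statement are the two natural ways to break the equal-length stacked pair: case (1) keeps the pair together but lifts them as a unit above all other blocks (preserving the nested equal-length structure, now separated from $J^c$); case (2) only lifts the top block $(\rho,A_n,B_n,\zeta_n)$ out, leaving a singleton $(\rho,A_{n-1},B_{n-1},\zeta_{n-1})$ in its original place that is then separated both from the pulled singleton and from the rest.

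For the converse direction, I would argue that nonvanishing of the original forces both pulls to be nonvanishing for \emph{all} valid choices of the shift parameters: pulling a block up amounts to tensoring with an honest segment representation in a Tadi\'{c}-style Jacquet module computation, and since $\pi_>(\psi,\underline{\zeta},\underline{l},\underline{\eta})\neq 0$, the relevant multiplicities that appear in the pulled configurations must remain strictly positive. The ``for all'' strengthening follows because the shift amount $T$ does not interact with the far-away Jordan blocks by the hypothesis $(\rho,A_i,B_i,\zeta_i)\gg_1 \bigcup_{j\leq n}\{(\rho,A_j,B_j,\zeta_j)\}$, so the combinatorial data controlling nonvanishing is independent of $T$ once $T$ is in the admissible range.

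The main obstacle I anticipate is to justify why the row-exchange case (3) of Proposition \ref{prop pull unequal case} is not needed here. In the unequal-length case, swapping via $S^+$ produces a genuinely new admissible order that exposes a different pulling opportunity; in the equal-length case, a swap of $(\rho,A_n,B_n,\zeta_n)$ and $(\rho,A_{n-1},B_{n-1},\zeta_{n-1})$ is not even admissible (they have identical $[A,B]$ and $\zeta$), so the would-be third condition becomes either vacuous or collapses into case (2) under the symmetry of the two blocks. Verifying this collapse rigorously, together with matching the Jacquet module calculation in the equal-length regime against Proposition \ref{prop generalized basic case}, is where the real work lies; the remaining book-keeping is a direct adaptation of the Jacquet/induction formalism used throughout Xu's algorithm.
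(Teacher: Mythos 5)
First, note that the paper does not prove this statement at all: Proposition \ref{prop pull equal case} is quoted verbatim from Xu (Proposition 7.3 of \cite{Xu21}), so there is no in-paper proof to compare against, and any complete argument would have to reproduce Xu's analysis of M{\oe}glin's construction.

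Judged on its own terms, your sketch has the two directions of the equivalence backwards, and this is a genuine gap rather than a book-keeping issue. In M{\oe}glin's construction the representation attached to the original (non-dominant) parameter is obtained from the representation attached to a shifted, dominating parameter by applying a composition of Jacquet functors; consequently the direction that comes almost for free is the \emph{converse} one: if $\pi_>(\psi,\underline{\zeta},\underline{l},\underline{\eta})\neq 0$, then the pulled representations in (1) and (2) are nonzero (a nonzero representation cannot be a Jacquet module of the zero representation), and the ``for all'' statement follows from the compatibility of these shifts. Your claim that ``reversibility already implies one of the implications: if the pulled parameterization is nonzero, then so is the original'' is exactly what fails: Jacquet modules can kill a nonzero representation, which is why the forward direction is the substantive one and why it requires \emph{both} conditions (1) and (2) simultaneously --- a single pull being nonzero is not sufficient. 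Your proposal never engages with how the two pulled configurations are combined to force survival of the relevant term in the Jacquet module, which is the actual content of Xu's proof; the appeal to ``multiplicities remaining strictly positive'' in a Tadi\'c-style computation is asserted rather than argued, and the reduction to Proposition \ref{prop generalized basic case} only applies after the pair is separated, which is the conclusion of the pulling procedure, not a step you may assume. The heuristic for why the analogue of case (3) of Proposition \ref{prop pull unequal case} is absent (the two blocks are identical, so the row exchange collapses into case (2)) is reasonable, but it does not repair the missing core argument.
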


Each of the resulting parameterizations in (1) and (2) above are called the pull of $(\rho,A_n,B_n,\zeta_n),(\rho,A_{n-1},B_{n-1},\zeta_{n-1}).$

The next operator we need is called `expand.' Let $\psi$ be a local Arthur parameter of good parity, $>_\psi$ be an admissible order, and fix a self-dual irreducible unitary supercuspidal representation $\rho$ of $\GL_d(F).$ Index $\Jord_\rho(\psi)$ such that $(\rho,A_i,B_i,\zeta_i)>(\rho,A_{i-1},B_{i-1},\zeta_{i-1}).$ Suppose there exists $n$ such that for $i>n$, we have
$$
(\rho,A_i,B_i,\zeta_i)\gg_2 \bigcup_{j=1}^n\{(\rho,A_j,B_j,\zeta_j)\}.
$$
Moreover, for $i<n$, we assume that 
$$A_i\leq A_n \ \mathrm{and}\ \mathrm{there}\ \mathrm{exists} \ \mathrm{no} \ [A_i,B_i]\subseteq[A_n,B_n] \ \mathrm{with} \ \zeta_i=\zeta_n.$$

Let $t_n$ be the the smallest nonnegative integer such that $B_n-t_n=B_i$ for some $i<n$ for which $\zeta_i=\zeta_n$. If there is not such an integer, then we set $t_n=\lfloor B_n \rfloor.$

Define $\psi_-$ by
$$
\Jord(\psi_-)=\Jord(\psi)\setminus\{(\rho,A_n,B_n,\zeta_n)\}
$$
and define $(\underline{\zeta}_-,\underline{l}_-, \underline{\eta}_-)$ to be the restriction of $(\underline{\zeta},\underline{l}, \underline{\eta})$ to $\Jord(\psi_-).$

Xu showed that in this above setting the nonvanishing of the representation depends on the nonvanishing of another representations which is defined to be `expansion' of the Jordan block $(\rho,A_n,B_n,\zeta_n).$

\begin{prop}[{\cite[Proposition 7.4]{Xu21}}]\label{prop expand}
    Let $\psi$ and $\psi_-$ be as above. Fix a positive integer $t\leq t_n.$ Then $\pi_>(\psi,\underline{\zeta},\underline{l},\underline{\eta})\neq 0$ if and only if
    \begin{align*}
            \pi_>&(\psi_-,\underline{\zeta}_-,\underline{l}_-,\underline{\eta}_-;(\rho, A_n+t,B_n-t,\zeta_n,l_n+t,\eta_n))\neq 0.
        \end{align*}
\end{prop}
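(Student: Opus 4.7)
The plan is to exploit the separation hypotheses through M{\oe}glin's inductive construction of $\pi_>(\psi,\underline{\zeta},\underline{l},\underline{\eta})$ via partial Jacquet modules, isolating the contribution of the Jordan block $(\rho,A_n,B_n,\zeta_n)$ and showing that the expansion step corresponds to an invertible insert-and-derive operation. First I would recall that when a Jordan block $(\rho,A,B,\zeta)$ with data $(l,\eta)$ is sufficiently separated from the remaining data (in the sense of being ``far away''), M{\oe}glin's operation attached to it is realized by taking successive Jacquet modules along segments whose \emph{lowest} entry is $B+l+1$ (on one side) and whose \emph{highest} entry is $A$, combined with a socle/cosocle extraction determined by $\zeta$ and $\eta$.

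The key observation to set up is that replacing $(A_n,B_n,l_n)$ by $(A_n+t,B_n-t,l_n+t)$ does not alter the lowest index $B_n+l_n+1 = (B_n-t)+(l_n+t)+1$ of the relevant derivatives, only extends the segments upward from $A_n$ to $A_n+t$. Thus the expanded block's M{\oe}glin operation factors as: (i)~insert additional induction factors $\rho|\cdot|^{A_n+1}\times\cdots\times\rho|\cdot|^{A_n+t}$ (and the matched mirror segment), then (ii)~take longer Jacquet modules that include these new factors plus the original ones. The hypothesis that $(\rho,A_n,B_n,\zeta_n)\gg_2 \bigcup_{j<n}\{(\rho,A_j,B_j,\zeta_j)\}$ is precisely what lets these inserted factors commute past the Jacquet-module steps coming from the smaller blocks, so that the extraction from $\pi_>(\psi_-,\underline{\zeta}_-,\underline{l}_-,\underline{\eta}_-;(\rho,A_n+t,B_n-t,\zeta_n,l_n+t,\eta_n))$ recovers $\pi_>(\psi,\underline{\zeta},\underline{l},\underline{\eta})$ after stripping the $t$ extra derivative layers, and vice versa.

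Next I would use the restriction $t\leq t_n$ to check that the expanded block remains in admissible configuration with the others. Because no $(\rho,A_i,B_i,\zeta_i)$ with $i<n$ and $\zeta_i=\zeta_n$ has $B_i$ in $\{B_n-t+1,\dots,B_n-1\}$, the interval $[B_n-t,A_n+t]$ still does not strictly contain any segment of the same sign among the smaller blocks. This prevents any of the extra derivative steps from producing an unintended cancellation or an obstruction of the type ruled out in Proposition~\ref{prop generalized basic case}, and it guarantees that the parameterization on the right-hand side is a legitimate M{\oe}glin parameterization (admissible order, valid $(l_n+t,\eta_n)$). I would then prove the ``if and only if'' by inducting on $t$: the $t=1$ case is the atomic insertion lemma, and the step $t\rightsquigarrow t+1$ is the same lemma applied to the once-expanded parameter (which still satisfies the $\gg_2$ hypothesis and the collision-avoidance hypothesis by construction).

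The main obstacle I anticipate is the careful bookkeeping of the signs and of the $(l,\eta)$-data under the insert-and-derive procedure: one must verify that $(l_n+t,\eta_n)$ is exactly the parameter produced when one commutes the extra $t$ layers of derivatives through M{\oe}glin's socle/cosocle construction for the block, and that no parity flip of $\eta_n$ is incurred. This forces one to work with the explicit derivative formulas in \cite{Xu21}, and the reason $\gg_2$ rather than $\gg_1$ is needed is precisely to have enough room to move the inserted factors past \emph{both} the forward derivatives (coming from larger blocks) and the socle extractions (coming from blocks of comparable size), which would be the technically subtle step.
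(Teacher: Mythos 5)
You should first note that the paper does not prove this statement at all: Proposition \ref{prop expand} is quoted verbatim from \cite[Proposition 7.4]{Xu21}, so there is no in-paper argument to compare with, and the relevant benchmark is Xu's original proof, which proceeds through the explicit structure of M{\oe}glin's construction (dominating parameters with discrete diagonal restriction and composites of Jacquet functors). Your outline captures the right intuition behind that proof: the expansion $(A_n,B_n,l_n)\mapsto(A_n+t,B_n-t,l_n+t)$ preserves exactly the two quantities $A_n-l_n$ and $B_n+l_n$ that govern the nonvanishing conditions \eqref{eqn nonvanishing separated}, the hypothesis that no same-sign block of the lower cluster is contained in $[A_n,B_n]$ together with $t\leq t_n$ keeps the expanded block from swallowing such a block, and the far-away hypothesis is what allows the expansion to be decoupled from the blocks indexed by $i>n$.

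However, as a proof the proposal has two concrete gaps. First, the entire technical content is delegated to an unproved ``atomic insertion lemma'': the claim that inserting the factors $\rho|\cdot|^{A_n+1},\dots,\rho|\cdot|^{A_n+t}$ and then taking longer Jacquet modules recovers the original representation, \emph{and} that nonvanishing transfers in both directions, is precisely what Xu proves, and the ``only if'' direction in particular does not follow from a commute-the-derivatives heuristic; one needs the explicit description of how M{\oe}glin's construction treats a single block with data $(l,\eta)$ (this is also where the asserted absence of a parity flip in $\eta_n$ has to be checked, which you flag but do not carry out). Second, your induction on $t$ is not sound as stated: the condition $\gg_2$ is a quantitative inequality whose right-hand side involves the $A$-values of the lower cluster, and expanding increases $A_n$ (by up to $t\leq B_n$), so the once-expanded parameter does not ``by construction'' satisfy the same $\gg_2$ hypothesis. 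The level-$2$ margin is there precisely to absorb the total expansion; consequently one must either prove the $t=1$ step under a degraded (level-$1$-type) separation hypothesis that is actually preserved, or treat general $t$ in one stroke as Xu does. Until those two points are supplied, the proposal is a plausible plan rather than a proof.
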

$(\rho, A_n+t,B_n-t,\zeta_n)$ is the expansion of the Jordan block $(\rho,A_n,B_n,\zeta_n).$

We need one more operator called `change sign' which only is defined in the specific cases that $B_1=0,\frac{1}{2}.$
 Let $\psi$ be a local Arthur parameter of good parity, $>_\psi$ be an admissible order, and fix a self-dual irreducible unitary supercuspidal representation $\rho$ of $\GL_d(F).$ Index $\Jord_\rho(\psi)$ such that $(\rho,A_i,B_i,\zeta_i)>(\rho,A_{i-1},B_{i-1},\zeta_{i-1}).$ Suppose there exists $n$ such that for $i>n$, we have
$$
(\rho,A_i,B_i,\zeta_i)\gg_1 \bigcup_{j=1}^n\{(\rho,A_j,B_j,\zeta_j)\}.
$$
Moreover, for $1<i\leq n$, we assume that 
$$A_1\geq A_i \ \mathrm{and}\ \zeta_i\neq \zeta_1.$$
Define $\psi_-$ by
$$
\Jord(\psi_-)=\Jord(\psi)\setminus\{(\rho,A_1,B_1,\zeta_1)\}
$$
and define $(\underline{\zeta}_-,\underline{l}_-, \underline{\eta}_-)$ to be the restriction of $(\underline{\zeta},\underline{l}, \underline{\eta})$ to $\Jord(\psi_-).$

\begin{prop}[{\cite[Propositions 7.5, 7.6]{Xu21}}]\label{prop change sign}
    Let $\psi$ and $\psi_-$ be as above.  
    \begin{enumerate}
        \item If $B_1=0,$ then $\pi_>(\psi,\underline{\zeta},\underline{l},\underline{\eta})\neq 0$ if and only if
    \begin{align*}
            \pi_>&(\psi_-,\underline{\zeta}_-,\underline{l}_-,\underline{\eta}_-;(\rho, A_1,0,-\zeta_1,l_1,\eta_1))\neq 0.
        \end{align*}
        \item Suppose $B_1=\frac{1}{2}.$ If $l_1=\frac{A_1+\frac{1}{2}}{2},$ we set $\eta_1=-1.$
        \begin{enumerate}
            \item If $\eta_1=1$, then $\pi_>(\psi,\underline{\zeta},\underline{l},\underline{\eta})\neq 0$ if and only if
    \begin{align*}
            \pi_>&(\psi_-,\underline{\zeta}_-,\underline{l}_-,\underline{\eta}_-;(\rho, A_1+1,\frac{1}{2},-\zeta_1,l_1+1,-\eta_1))\neq 0.
            \end{align*}
        \item  If $\eta_1=-1$, then $\pi_>(\psi,\underline{\zeta},\underline{l},\underline{\eta})\neq 0$ if and only if
    \begin{align*}
            \pi_>&(\psi_-,\underline{\zeta}_-,\underline{l}_-,\underline{\eta}_-;(\rho, A_1+1,\frac{1}{2},-\zeta_1,l_1,-\eta_1))\neq 0.
            \end{align*}
        \end{enumerate}
    \end{enumerate}
\end{prop}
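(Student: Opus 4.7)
The plan is to follow the same overall strategy used throughout Xu's nonvanishing algorithm: exploit the separation hypothesis to reduce to a small local Arthur parameter, and then establish the sign-change equivalence by a direct computation within M{\oe}glin's parameterization. Since for $i>n$ the blocks $(\rho,A_i,B_i,\zeta_i)$ are far away from those indexed by $i\leq n$, the pull operators of Propositions~\ref{prop pull unequal case} and~\ref{prop pull equal case} allow me to detach the far blocks from the rest; none of the proposed sign changes touches these far blocks, so nonvanishing of $\pi_>(\psi,\underline{\zeta},\underline{l},\underline{\eta})$ reduces to nonvanishing of the sub-problem supported on the blocks with $i\leq n$. I may thus assume $\Jord_\rho(\psi)=\{(\rho,A_i,B_i,\zeta_i)\}_{i=1}^n$ with $A_1\geq A_i$ and $\zeta_i\neq\zeta_1$ for $1<i\leq n$.

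For the case $B_1=0$, the Jordan block $(\rho,A_1,0,\zeta_1)$ corresponds to $\rho\otimes S_{A_1+1}\otimes S_{A_1+1}$, which is symmetric in its two $\SL_2(\mathbb{C})$ factors, so the sign $\zeta_1$ is only a labeling convention. Inspecting M{\oe}glin's recipe shows that the contribution of the first block to the parameterization depends only on $A_1$ and on $(l_1,\eta_1)$, not on $\zeta_1$ once $a_1=b_1$. Since every other block has $\zeta_i\neq\zeta_1$, flipping $\zeta_1$ leaves the parameterization of each block untouched except through a relabeling, and $\pi_>(\psi,\underline{\zeta},\underline{l},\underline{\eta})$ is literally the same representation in the two cases.

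The main obstacle is the $B_1=\frac{1}{2}$ case, where $(\rho,A_1,\frac{1}{2},\zeta_1)$ and $(\rho,A_1+1,\frac{1}{2},-\zeta_1)$ are genuinely distinct Jordan blocks and the claim moreover adjusts $l_1$ and $\eta_1$. My plan is to compute the Jacquet module of $\pi_>(\psi,\underline{\zeta},\underline{l},\underline{\eta})$ along the cuspidal support direction $\rho|\cdot|^{1/2}$ and, using Frobenius reciprocity (Lemma~\ref{lemma Frobenius rec}) together with the explicit Jacquet module formulas that underlie M{\oe}glin's construction, identify it with a parabolic induction from $\pi_>(\psi_-,\underline{\zeta}_-,\underline{l}_-,\underline{\eta}_-;(\rho,A_1+1,\frac{1}{2},-\zeta_1,l_1',-\eta_1))$, with $l_1'=l_1+1$ or $l_1'=l_1$ according as $\eta_1=+1$ or $\eta_1=-1$. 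The key technical point is explaining the shift $l_1\mapsto l_1+1$ when $\eta_1=+1$: the new extremal row $\rho|\cdot|^{A_1+1/2}$ produced by $A_1\mapsto A_1+1$ gets absorbed into the depth of the Jacquet module precisely when the original sign $\eta_1$ makes the block ``outward-facing.'' The boundary convention that $l_1=(A_1+\frac{1}{2})/2$ is treated as $\eta_1=-1$ arises because at this maximal depth the $\eta_1$-label is degenerate in M{\oe}glin's construction. I would verify these combinatorial identities by induction on $A_1$, tracking the parity of rows consumed at each step and using the assumption $\zeta_i\neq\zeta_1$ for $i>1$ to guarantee that no other block obstructs the Jacquet module computation.
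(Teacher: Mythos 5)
Before anything else, note that the paper itself offers no proof of this statement: it is imported verbatim from Xu ([Xu21, Propositions 7.5, 7.6]) as one of the ingredients of the nonvanishing algorithm. So what you are really attempting is to reprove Xu's change-of-sign result, and as it stands your sketch has genuine gaps. First, the preliminary reduction is not licensed by the tools you invoke: Propositions \ref{prop pull unequal case} and \ref{prop pull equal case} let you \emph{shift} the blocks indexed by $i\leq n$ away from the far blocks (or vice versa), they never delete the far blocks, and nonvanishing of $\pi_>(\psi,\underline{\zeta},\underline{l},\underline{\eta})$ is not simply equivalent to nonvanishing of a truncated parameter supported on $\{(\rho,A_i,B_i,\zeta_i)\}_{i=1}^n$. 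Second, and more seriously, the $B_1=0$ case is not ``literally the same representation.'' In M{\oe}glin's construction the sign $\zeta_1$ is not a mere label on a symmetric summand: it dictates the direction in which $(\rho,A_1,B_1,\zeta_1)$ is deformed to a dominating parameter with discrete diagonal restriction and hence which Jacquet functors (at exponents $\zeta_1(B_1+1),\zeta_1(B_1+2),\dots$) are applied to recover $\pi$; for $B_1=0$ these exponents are $\pm1,\pm2,\dots$ and genuinely differ for the two signs. The equivalence of nonvanishing is exactly the content of Xu's Proposition 7.5, and it is proved \emph{using} the standing hypotheses ($(\rho,A_i,B_i,\zeta_i)\gg_1$ for $i>n$, $A_1\geq A_i$ and $\zeta_i\neq\zeta_1$ for $1<i\leq n$) — hypotheses that would be superfluous if the statement were a relabeling, which is a clear warning sign.

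The $B_1=\tfrac{1}{2}$ case is the heart of the proposition and your outline does not yet engage with its actual difficulty. The replacement block $(\rho,A_1+1,\tfrac{1}{2},-\zeta_1)$ corresponds to enlarging the summand (e.g.\ $\rho\otimes S_{a_1}\otimes S_{b_1}$ becomes $\rho\otimes S_{a_1}\otimes S_{b_1+2}$), so the comparison representation lives on a group of strictly larger rank; consequently the proposed identification of a Jacquet module of $\pi$ along $\rho|\cdot|^{1/2}$ with a parabolic induction from the new representation cannot be correct as stated, and the two key combinatorial phenomena — the shift $l_1\mapsto l_1+1$ precisely when $\eta_1=+1$, and the boundary convention that $l_1=\frac{A_1+\frac{1}{2}}{2}$ is to be read as $\eta_1=-1$ — are asserted rather than derived. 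These points are exactly where Xu's argument does its work, so either cite [Xu21] (as the paper does) or reproduce his proof; the present sketch does not substitute for it.
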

Hence the change sign operator turns $\zeta_1$ into $-\zeta_1$.

We now state Xu's nonvanishing algorithm. 

\begin{algo}[{\cite[\S8]{Xu21}}]\label{algo Xu nonvanishing} 
    Let $\psi$ be a local Arthur parameter of good parity and fix an admissible order $>$. Set $\Psi=\{\psi\}.$ By performing Steps 1, 2, and 3 repeatedly, we eventually transform $\Psi$ into a set consisting solely of local Arthur parameters in the generalized basic case.
    
{\bf Step 0:} If every $\psi'\in\Psi$ is in the generalized basic case, we determine if $\pi_>(\psi',\underline{\zeta},\underline{l},\underline{\eta})\neq 0$ by Proposition \ref{prop generalized basic case}. If $\pi_>(\psi',\underline{\zeta},\underline{l},\underline{\eta})= 0$, for some $\psi'\in\Psi$, then $\pi_>(\psi,\underline{\zeta},\underline{l},\underline{\eta})= 0$ by Propositions \ref{prop generalized basic case}, \ref{prop pull unequal case}, \ref{prop pull equal case}, \ref{prop expand}, and \ref{prop change sign}.
If $\pi_>(\psi',\underline{\zeta},\underline{l},\underline{\eta})\neq 0$, for every $\psi'\in\Psi$, then $\pi_>(\psi,\underline{\zeta},\underline{l},\underline{\eta})\neq 0$ by Propositions \ref{prop generalized basic case}, \ref{prop pull unequal case}, \ref{prop pull equal case}, \ref{prop expand}, and \ref{prop change sign}. In either case, the algorithm terminates here.

Otherwise, let $\psi'\in\Psi$ be not in the generalized basic case and $\rho$ be a self-dual irreducible unitary supercuspidal representation of $\GL_d(F)$ for which $\Jord_\rho(\psi')$ is not of good shape and proceed to Step 1.
    
{\bf Step 1:} Index $\Jord_\rho(\psi')$ such that $(\rho,A_i,B_i,\zeta_i)>(\rho,A_{i-1},B_{i-1},\zeta_{i-1}).$ Let $n$ be such that for $i>n$, we have
$$
(\rho,A_i,B_i,\zeta_i)\gg_2 \bigcup_{j=1}^n\{(\rho,A_j,B_j,\zeta_j)\},
$$
and that for $i>n$ the Jordan blocks are in good shape. Let $A=\max\{A_i\ | \ i\leq n\}$ and choose a Jordan block $(\rho, A, B, \zeta)\in\cup_{j=1}^n\{(\rho,A_j,B_j,\zeta_j)\}.$ Let 
\begin{equation}\label{eqn S in Xu nonvanish algo}
S=\left\{(\rho, A_i, B_i, \zeta_i)\in\bigcup_{j=1}^n\{(\rho,A_j,B_j,\zeta_j)\} \ | \ [A_i, B_i] \subsetneq [A,B] \ \mathrm{and} \ \zeta_i=\zeta\right\}.    
\end{equation}

If $S\neq\emptyset,$ let $(\rho, A', B', \zeta')\in S$ be such that $A'=\max\{A_i | (\rho, A_i, B_i, \zeta_i)\in S\}.$ Rearrange $>_\psi$ such that $(\rho, A_n, B_n, \zeta_n)=(\rho, A,B,\zeta)$ and $(\rho, A_{n-1}, B_{n-1}, \zeta_{n-1})=(\rho, A',B',\zeta').$ Then we pull $(\rho, A_n, B_n, \zeta_n)$ and $(\rho, A_{n-1}, B_{n-1}, \zeta_{n-1})$ by Proposition \ref{prop pull unequal case} and replace $\psi'$ in $\Psi$ by the 3 local Arthur parameters from Proposition \ref{prop pull unequal case}. We then repeat Step 1.

Suppose that $S=\emptyset$. If there exists another Jordan block $(\rho, A', B', \zeta')\in \Jord_\rho(\psi)$ such that $[A',B']=[A,B]$ and $\zeta'=\zeta,$ then we rearrange $>_\psi$ such that $(\rho, A_n, B_n, \zeta_n)=(\rho, A,B,\zeta)$ and $(\rho, A_{n-1}, B_{n-1}, \zeta_{n-1})=(\rho, A',B',\zeta').$ Then we pull $(\rho, A_n, B_n, \zeta_n)$ and $(\rho, A_{n-1}, B_{n-1}, \zeta_{n-1})$ by Proposition \ref{prop pull equal case} and replace $\psi'$ in $\Psi$ by the 2 local Arthur parameters from Proposition \ref{prop pull equal case}. We then repeat Step 1.

If at some stage $\Jord_\rho(\psi')$ is of good shape, then we return to Step 0. Otherwise, we proceed to Step 2.

{\bf Step 2:} From Step 1, for $\psi'\in\Psi$ we assume that the set
\[\left\{(\rho, A_i, B_i, \zeta_i)\in\bigcup_{j=1}^n\{(\rho,A_j,B_j,\zeta_j)\} \ | \ [A_i, B_i] \subseteq [A,B],  \zeta_i=\zeta\right\}\setminus\{(\rho, A, B, \zeta)\}\]
is empty.
Furthermore, there exists $i\leq n$ such that $(\rho_i, A_i, B_i, \zeta_i)=(\rho, A, B, \zeta)$ since $\Jord_\rho(\psi')$ was not of good shape in Step 1. 
We rearrange $>_\psi$ such that $(\rho, A_n, B_n, \zeta_n)=(\rho, A,B,\zeta)$. We then expand $[A_n,B_n]$ by Proposition \ref{prop expand}. We denote the effect of this expansion on $[A_n,B_n]$ by $[A_n^*,B_n^*]$ and replace $\psi'\in \Psi$ by the resulting expansion. Also note that the resulting local Arthur parameter is still not of good shape. We proceed to Step 3.

{\bf Step 3:} Consider the set
\begin{equation}\label{eqn algo step 3}
    \left\{(\rho, A_i, B_i, \zeta_i)\in\bigcup_{j=1}^{n-1}\{(\rho,A_j,B_j,\zeta_j)\} \ | \ [A_i, B_i] \subsetneq [A_n^*,B_n^*] \ \mathrm{and} \ \zeta_i=\zeta\right\}.
\end{equation}
If this set is nonempty, repeat Step 1.

Otherwise, by definition of `expand,' we must have $B_n^*=0$ or $B_n^*=\frac{1}{2}.$ In either case, we use row exchange (Definition \ref{defn row exchange}) repeatedly to swap $[A_n^*,B_n^*]$ with $[A_i,B_i]$ for $i=1,\dots,n-1.$ Thus $[A_n^*,B_n^*]$ become the first row in the resulting order. We then apply the change sign operator by Proposition \ref{prop change sign} and replace $\psi'$ in $\Psi$ by the resulting local Arthur parameter from Proposition \ref{prop change sign}. We return to Step 1.
\end{algo}

\subsection{Reduction to \texorpdfstring{$\chi_V$}{chi}}

In this subsection, we reduce $\psi$ to the case that the restriction of $\psi$ to $W_F$ is (up to multiplicity) $\chi_V.$ In general, we have that the restriction is a finite sum $\oplus_{i=1}^r m_i\rho_i,$ where $m_i$ is the multiplicity of $\rho_i$ and $\rho_i\neq\rho_j$ for any $i\neq j.$ They key idea is that to determine if a representation given in M{\oe}glin's parameterization does not vanish, then for each $\rho_i$, Xu's nonvanishing algorithm (Algorithm \ref{algo Xu nonvanishing}) gives necessary and sufficient conditions on the parameterization. Importantly, the conditions coming from $\rho_i$ are only affected by the piece of the parameterization attached to $\rho_i.$ This observation leads to the following lemma.

\begin{lemma}\label{lem red to Chi_W}
    Suppose that $\pi\in\Pi_\psi\cap\Pi_{T(\psi)}$ for some $\psi\in\Psi_{gp}(G_n)$ and an operator $T.$ In M{\oe}glin's parameterization, write $\pi=\pi_>(\psi,\underline{\zeta},\underline{l},\underline{\eta})=\pi_{>'}(T(\psi),\underline{\zeta}',\underline{l}',\underline{\eta}').$ Also write $$
    \psi=\bigoplus_{i=1}^r \chi_V\otimes S_{a_i}\otimes S_{b_i} \bigoplus_{\rho\neq\chi_V}\left(\bigoplus_{j\in I_\rho}\rho \otimes S_{a_j}\otimes S_{b_j}\right).$$
    Suppose further that the operator $T$ affects the parameterization on
    $$
\bigoplus_{\rho\neq\chi_V}\left(\bigoplus_{j\in I_\rho}\rho \otimes S_{a_j}\otimes S_{b_j}\right).
    $$
    Let $\pi_\alpha$ and $\pi_{T,\alpha}$ be the corresponding representations described by Recipe \ref{rec pi_alpha}. Let $\alpha\gg 0$ such that $\pi_\alpha=\pi_{T,\alpha}\in\Pi_{\psi_\alpha}\cap\Pi_{T(\psi)_\alpha}$ (such $\alpha$ exists by Theorem \ref{thm Moeglin Adams}). If $\pi_{\alpha-2}\neq 0,$ then $\pi_{T,\alpha-2}\neq 0$.
\end{lemma}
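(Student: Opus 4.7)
The key observation is that Xu's nonvanishing algorithm (Algorithm \ref{algo Xu nonvanishing}) decomposes according to supercuspidal support: every step of the algorithm (pull, expand, change sign, row exchange, and the generalized basic case check of Proposition \ref{prop generalized basic case}) involves only the Jordan blocks in $\Jord_\rho(\psi')$ for a fixed $\rho$. Consequently, for any parameterization $(\psi', \underline{\zeta}, \underline{l}, \underline{\eta})$ of good parity, the representation $\pi_>(\psi', \underline{\zeta}, \underline{l}, \underline{\eta})$ is nonzero if and only if the corresponding $\rho$-restrictions of the parameterization satisfy Xu's nonvanishing conditions for every $\rho$ appearing in $\psi'$.

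Next I would analyze how the $\rho$-restrictions behave under the three modifications in play. First, passing from $\psi$ to $\psi_\alpha$ twists by $\chi_W\chi_V^{-1}$, so the $\chi_V$-part of $\psi$ maps to the $\chi_W$-part of $\psi_\alpha$, while each non-$\chi_V$ summand maps to a summand indexed by a supercuspidal distinct from $\chi_W$; the added block $\chi_W\otimes S_1\otimes S_\alpha$ also lies in $\Jord_{\chi_W}(\psi_\alpha)$. By hypothesis, $T$ alters only non-$\chi_V$ data in $\psi$, hence only non-$\chi_W$ data in $\psi_\alpha$ and $\psi_{\alpha-2}$; together with Theorem \ref{thm Moeglin Adams}, this shows that the $\chi_W$-parameterizations of $\pi_\alpha$ and $\pi_{T,\alpha}$ coincide. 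Second, Recipe \ref{rec pi_alpha} in passing from $\pi_\alpha$ to $\pi_{\alpha-2}$ only modifies the $\chi_W$-part: it shrinks the added block from $\chi_W\otimes S_1\otimes S_\alpha$ to $\chi_W\otimes S_1\otimes S_{\alpha-2}$ and applies row exchanges (Definition \ref{defn row exchange}) entirely within $\Jord_{\chi_W}$. The same is true on the $T$-side, and because the $\chi_W$-restrictions coincide, these two instances of the Recipe perform identical modifications at the $\chi_W$-level.

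Putting these together, the proof splits by $\rho$. For every $\rho \neq \chi_W$, the $\rho$-restriction of the parameterization of $\pi_{T,\alpha-2}$ coincides with that of $\pi_{T,\alpha}$, which satisfies Xu's $\rho$-conditions since $\pi_{T,\alpha}=\pi_\alpha\neq 0$. For $\rho = \chi_W$, the $\chi_W$-restriction of the parameterization of $\pi_{T,\alpha-2}$ coincides with that of $\pi_{\alpha-2}$, and hence satisfies Xu's $\chi_W$-conditions by the hypothesis $\pi_{\alpha-2}\neq 0$. Running Xu's algorithm on each $\rho$-part then yields $\pi_{T,\alpha-2}\neq 0$.

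The principal technical hurdle is verifying the strict $\rho$-wise independence of Xu's algorithm and of the Recipe's modifications. Specifically, one must confirm that every pull/expand/change-sign operation is confined to a single $\Jord_\rho$, and that the row exchanges called for by Recipe \ref{rec pi_alpha}, which alter $\underline{l}$ and $\underline{\eta}$ nontrivially via Definition \ref{defn row exchange}, produce the same effect on $\chi_W$-data on both the $\psi_\alpha$ and $T(\psi)_\alpha$ sides. Both points follow by direct inspection of the definitions, but require care because the operators involved mix sign and length data in intricate ways.
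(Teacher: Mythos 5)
Your proposal is correct and follows essentially the same route as the paper: both rest on the fact that Xu's nonvanishing conditions decompose by supercuspidal support, then match the $\chi_W$-part of $\pi_{T,\alpha-2}$ with that of $\pi_{\alpha-2}$ and the remaining $\rho$-parts with those of $\pi_{T,\alpha}$, importing nonvanishing from each. The extra care you flag about $\rho$-wise independence and the row exchanges in Recipe \ref{rec pi_alpha} is exactly the observation the paper states (without further verification) at the start of its reduction-to-$\chi_V$ subsection.
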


\begin{proof}    
    Algorithm \ref{algo Xu nonvanishing} imposes necessary and sufficient nonvanishing conditions on the  parameterization of $\pi_{T,\alpha-2}$ which determine exactly when $\pi_{T,\alpha-2}\neq 0.$ The parameterizations of $\pi_{T,\alpha-2}$ and $\pi_{T,\alpha}$ agree on
$$\bigoplus_{\rho\neq\chi_V}\left(\bigoplus_{j\in I_\rho}\chi_W\chi_V^{-1}\rho \otimes S_{a_j}\otimes S_{b_j}\right).$$
    Since $\pi_{T,\alpha}\neq 0$, the nonvanishing conditions of Algorithm \ref{algo Xu nonvanishing} must hold. On the other hand, the parameterizations of $\pi_{T,\alpha-2}$ and $\pi_{\alpha-2}$ agree on $\bigoplus_{i=1}^r \chi_W\otimes S_{a_i}\otimes S_{b_i} \oplus \chi_W\otimes S_1\otimes S_{\alpha-2}.$ Hence Algorithm \ref{algo Xu nonvanishing} imposes the same nonvanishing conditions on either of them. Since $\pi_{\alpha-2}\neq 0$, it follows that the parameterization satisfies the nonvanishing conditions. Thus the parameterization of $\pi_{T,\alpha-2}$ satisfies the  nonvanishing conditions of Algorithm \ref{algo Xu nonvanishing} and hence $\pi_{T,\alpha-2}\neq 0.$
\end{proof}

Lemma \ref{lem red to Chi_W} directly admits the following corollary.

\begin{cor}\label{cor rho=chi_v}
    Suppose that $$
    \psi=\bigoplus_{i=1}^r \chi_V\otimes S_{a_i}\otimes S_{b_i} \bigoplus_{\rho\neq\chi_V}\left(\bigoplus_{j\in I_\rho}\rho \otimes S_{a_j}\otimes S_{b_j}\right)$$
    and an operator $T$ affects
    $$
\bigoplus_{\rho\neq\chi_V}\left(\bigoplus_{j\in I_\rho}\rho \otimes S_{a_j}\otimes S_{b_j}\right).
    $$
    If $\pi\in\Pi_\psi\cap\Pi_{T(\psi)}$, then $d(\pi,\psi)=d(\pi,T(\psi)).$ In particular, Theorem \ref{thm main thm} holds in this case.
\end{cor}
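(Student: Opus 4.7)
The plan is to establish the equality $d(\pi,\psi) = d(\pi,T(\psi))$ by applying Lemma~\ref{lem red to Chi_W} in both directions, with the two parameters swapped.

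For the inequality $d(\pi,T(\psi)) \leq d(\pi,\psi)$, I would argue by descending induction on $\alpha$, starting from some $\alpha_0 \gg 0$ where Theorem~\ref{thm Moeglin Adams} guarantees $\pi_{\alpha_0} = \pi_{T,\alpha_0} = \theta_{-\alpha_0}(\pi)$. Suppose inductively that at some level $\alpha \geq d(\pi,\psi) + 2$ we have $\pi_\alpha = \pi_{T,\alpha} = \theta_{-\alpha}(\pi) \neq 0$. Since $\alpha - 2 \geq d(\pi,\psi)$, we know $\pi_{\alpha-2} \neq 0$, and Lemma~\ref{lem red to Chi_W} produces $\pi_{T,\alpha-2} \neq 0$. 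Theorem~\ref{thm A BH22}, applied once to $\psi$ and once to $T(\psi)$, then forces $\pi_{\alpha-2} = \pi_{T,\alpha-2} = \theta_{-(\alpha-2)}(\pi)$, restoring the inductive hypothesis one step lower. Thus $\pi_{T,\alpha} \neq 0$ for every $\alpha \geq d(\pi,\psi)$, yielding $d(\pi,T(\psi)) \leq d(\pi,\psi)$.

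The reverse inequality is obtained by the symmetric argument. I would apply Lemma~\ref{lem red to Chi_W} to the parameter $T(\psi)$ and the operator $T^{-1}$. The hypothesis $\pi \in \Pi_{T(\psi)} \cap \Pi_{T^{-1}(T(\psi))}$ is just the original $\pi \in \Pi_\psi \cap \Pi_{T(\psi)}$. Moreover, since $T$ leaves the $\chi_V$-summands of $\psi$ intact and those summands literally coincide in $T(\psi)$, its inverse $T^{-1}$ acts only on the $\rho \neq \chi_V$ part of $T(\psi)$, meeting the final hypothesis of the lemma. Running the same descent gives $d(\pi,\psi) \leq d(\pi,T(\psi))$, so equality holds.

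The parenthetical assertion that Theorem~\ref{thm main thm} holds in this case is immediate, since for a raising operator $T$ the theorem only claims the inequality $d^-(\pi,T(\psi)) \leq d^-(\pi,\psi)$, which is weaker than equality. The only bookkeeping required is to verify that $T^{-1}$ is an operator in the sense of Definition~\ref{def operators on parameters} still affecting only the non-$\chi_V$ part; this is immediate from the definitions, so I do not anticipate any genuine obstacle.
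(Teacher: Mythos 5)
Your proposal is correct and takes essentially the paper's route: the paper deduces the corollary directly from Lemma \ref{lem red to Chi_W}, and your argument is precisely that lemma applied symmetrically (to $T$ and to $T^{-1}$, which is still an operator touching only the $\rho\neq\chi_V$ part), with the downward induction via Theorem \ref{thm A BH22} simply making explicit the bookkeeping the paper leaves implicit. No gap worth flagging.
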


\begin{rmk}
    The corollary implies that it is sufficient to prove Theorem \ref{thm main thm} when
    $$
    \psi=\bigoplus_{i=1}^r \chi_V\otimes S_{a_i}\otimes S_{b_i}.
    $$
\end{rmk}

Furthermore, this corollary proves Theorem \ref{thm main thm} for $T=dual_k^-.$ Indeed, $\chi_V$ is an orthogonal representation and $dual_k^-$ affects $\psi$ by changing a summand $\rho\otimes S_{a_k}\otimes S_{a_k+1}$ into $\rho\otimes S_{a_k+1}\otimes S_{a_k}$. However, if $\rho=\chi_V$ neither summand is of good parity since $G$ is symplectic. Thus $\rho\neq\chi_V$. By Corollary \ref{cor rho=chi_v}, we have that Theorem \ref{thm main thm} holds (with equality) if $T=dual_k^-.$ We record this below.

\begin{cor}\label{cor main theorem for dual_k^-}
    Let $\pi$ be a representation of $G_n$ of Arthur type. Suppose that $T=dual_k^-$ is a raising operator and $\pi\in\Pi_\psi\cap\Pi_{T(\psi)}.$ Then
    \begin{equation}
        d^-(\pi,T(\psi))= d^-(\pi,\psi).
    \end{equation}
\end{cor}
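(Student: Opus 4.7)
The plan is to deduce Corollary \ref{cor main theorem for dual_k^-} as an essentially immediate consequence of Corollary \ref{cor rho=chi_v}. All that must be verified is the following observation: whenever the raising operator $T = dual_k^{-}$ is applicable on $\psi$, it must act on a summand of $\psi$ indexed by some $\rho \neq \chi_V$. Granted this, $T$ leaves the $\chi_V$-part of $\psi$ untouched, so the hypotheses of Corollary \ref{cor rho=chi_v} apply and the desired equality $d^-(\pi, T(\psi)) = d^-(\pi, \psi)$ follows at once.

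To confirm the observation, recall from Definition \ref{def operators on parameters}(3) that $dual_k^{-}$ is only applicable when the summand $\rho \otimes S_{a_k} \otimes S_{a_k+1}$ lies in $\psi_{gp}$, hence must be of good parity for the symplectic group $G$. A parity computation excludes $\rho = \chi_V$: the character $\chi_V$ is orthogonal, and since the consecutive integers $a_k$ and $a_k+1$ have opposite parity, the product $\chi_V \otimes S_{a_k} \otimes S_{a_k+1}$ carries the opposite bilinear form type from what good parity on $G$ demands. Hence any summand on which $dual_k^{-}$ acts has $\rho \neq \chi_V$, and Corollary \ref{cor rho=chi_v} applies verbatim.

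I do not anticipate any real obstacle: the argument reduces to this single parity check together with the already-established Corollary \ref{cor rho=chi_v}. Note that the conclusion is an \emph{equality} rather than merely the inequality predicted by Theorem \ref{thm main thm}; this reflects the fact that $dual_k^{-}$ is too local (touching only a single $\rho \neq \chi_V$ summand) to alter the nonvanishing obstruction $d^-$ coming from the $\chi_V$-component at all. The substantive work for the main theorem therefore concerns the remaining raising operators $ui_{i,j}^{-1}$ and $dual \circ ui_{j,i} \circ dual$, whose effect on the $\chi_V$-part of the parameterization requires the full analysis via Xu's nonvanishing algorithm.
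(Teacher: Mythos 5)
Your proposal is correct and follows essentially the same route as the paper: the paper likewise observes that a summand $\chi_V\otimes S_{a_k}\otimes S_{a_k+1}$ cannot be of good parity for the symplectic group (since $\chi_V$ is orthogonal and $a_k$, $a_k+1$ have opposite parity), so $dual_k^-$ can only act on a $\rho\neq\chi_V$ summand, and the equality then follows from Corollary \ref{cor rho=chi_v}. No gaps.
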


\section{Obstructions and proof of Theorem \ref{thm main thm}}\label{sec proof of main thm}

The goal of this section is to prove Theorem \ref{thm main thm}. The outline of the proof is as follows. By Lemma \ref{lem Red of Adams to gp}, it is sufficient to assume that $\pi\in\Pi_\psi\cap\Pi_{T(\psi)}$ for some $\psi\in\Psi_{gp}(G_n)$ and raising operator $T.$ By Theorem \ref{thm Moeglin construction}, M{\oe}glins parameterization gives $\pi=\pi_{>}(\psi,\underline{\zeta},\underline{l},\underline{\eta})$ and $\pi=\pi_{>_T}(T(\psi),\underline{\zeta}_T,\underline{l}_T,\underline{\eta}_T).$ Let $\pi_\alpha$ and $\pi_{T,\alpha}$ be the corresponding representations described by Recipe \ref{rec pi_alpha}. By Theorem \ref{thm Moeglin Adams}, for $\alpha\gg 0$, we have $\pi_\alpha=\pi_{T,\alpha}\in\Pi_{\psi_\alpha}\cap\Pi_{T(\psi)_\alpha}.$ Assume that $\pi_{\alpha-2}\neq 0.$ By Xu's nonvanishing algorithm (Algorithm \ref{algo Xu nonvanishing}), we show that $\pi_{T,\alpha-2}\neq 0$ (Theorem \ref{thm alpha neq 0 implies Talpha neq 0}) by relating the resulting nonvanishing conditions with those of $\pi_{T,\alpha}$ and $\pi_{\alpha-2}$. This implies that $d^-(\pi,T(\psi))\leq d^-(\pi,\psi)$ which proves Theorem \ref{thm main thm}.

Recall that we have proved Theorem \ref{thm main thm} when $T=dual_k^-$ (see Corollary \ref{cor main theorem for dual_k^-}). Thus it remains to consider the cases when $T$ is one of  $ui_{i,j}^{-1}$ or $dual \circ ui_{j,i} \circ dual.$ By \cite[Corollary 5.6]{HLL22}, if $ui_{i,j}^{-1}$ is not of type 3', then $ui_{i,j}^{-1}=dual \circ ui_{j,i} \circ dual.$ Thus, it is enough to consider the case where $T$ is either $ui_{i,j}^{-1}$ of type 3' or $dual \circ ui_{j,i} \circ dual.$

\subsection{Obstructions}

We begin by considering the obstructions introduced by raising operators. These obstructions arise from considering Xu's nonvanishing algorithm (Algorithm \ref{algo Xu nonvanishing}). In particular, each obstruction arises from performing a step of Algorithm \ref{algo Xu nonvanishing} involving the added Jordan block and another certain Jordan block on which a raising operator is applicable. With Theorem \ref{thm main thm} in mind, one should roughly expect that performing the raising operator should remove the obstruction.

The following lemma describes an obstruction to the Adams conjecture when $dual \circ ui_{j,i} \circ dual$ is applicable where $ui_{j,i}$ is not of type 3'.

\begin{lemma}\label{lemma partial obstruction dual ui dual zeta=-1}
    Let $T=dual \circ ui_{j,i} \circ dual$ where $ui_{j,i}$ is not of type 3' and suppose that $\pi\in\Pi_\psi\cap\Pi_{T(\psi)}$ for some $\psi\in\Psi_{gp}(G_n)$. Let $T(\psi)$ be given by replacing the summands 
    \[\chi_V\otimes S_{A_j+\zeta_i B_i+1}\otimes S_{A_j-\zeta_i B_i+1} +\chi_V\otimes S_{A_i+ \zeta_j B_j+1}\otimes S_{A_i- \zeta_j B_j+1} \]
of $\psi$ with
    \[ \chi_V\otimes S_{A_i+\zeta_i B_i+1}\otimes S_{A_i-\zeta_i B_i+1} +\chi_V\otimes S_{A_j+\zeta_j B_j+1}\otimes S_{A_j- \zeta_j B_j+1}.\]
    In M{\oe}glin's parameterization, write $$\pi=\pi_>(\psi,\underline{\zeta},\underline{l},\underline{\eta}).$$  
     Let $\pi_\alpha$ be the corresponding representation described by Recipe \ref{rec pi_alpha}. Let $\alpha\gg 0$ such that $\pi_\alpha\in\Pi_{\psi_\alpha}$ (such $\alpha$ exists by Theorem \ref{thm Moeglin Adams}). If $\frac{\alpha-3}{2}=A_j$ and $\zeta_i=-1,$ then $\pi_{\alpha-2}=0.$
\end{lemma}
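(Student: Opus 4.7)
The plan is to apply Xu's nonvanishing algorithm (Algorithm \ref{algo Xu nonvanishing}) to the M{\oe}glin parameterization of $\pi_{\alpha-2}$ and to extract an obstruction from the pair consisting of the added block and the block $(\chi_W,A_i,B_j,-1)$. First I would unpack the applicability of $T=dual\circ ui_{j,i}\circ dual$ (not of type $3'$): translated to M{\oe}glin's convention where $B\ge 0$, this forces $\zeta_j=-1$ (hence both blocks involved in $T$ have sign $-1$), $A_i\ge A_j+1$, and $A_j\ge B_j>B_i\ge 0$.

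Next, I would analyze $\psi_{\alpha-2}$ on the character $\chi_W$. Since $(\alpha-3)/2=A_j$, the summand $\chi_W\otimes S_1\otimes S_{\alpha-2}$ contributes the added block $(\chi_W,A_j,A_j,-1)$, which shares $A=A_j$ and $\zeta=-1$ with the block $(\chi_W,A_j,B_i,-1)$ coming from $(\chi_V,A_j,B_i,-1)\in\Jord(\psi)$; the third relevant block is $(\chi_W,A_i,B_j,-1)$, arising from the other $T$-affected summand of $\psi$. Recipe~\ref{rec pi_alpha} assigns the added block the parameters $l=0$ and $\eta=\eta_0$ determined by the Hasse invariant, while the other blocks inherit their $(l,\eta)$-values from $\pi$.

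I would then run Xu's algorithm. Since $A_i$ is the largest $A$-value in the local $\chi_W$-block set and $[A_j,A_j]\subsetneq[A_i,B_j]$ (using $B_j\le A_j<A_i$), Step 1 pulls the pair $\{(\chi_W,A_i,B_j,-1),(\chi_W,A_j,A_j,-1)\}$ via Proposition~\ref{prop pull unequal case}. Choosing the shift parameters $T_n$ and $T_{n-1}=T_n+(A_i-A_j)$ aligns both pulled blocks at $A=A_i+T_n$, so the pulled pair falls into the generalized basic case with $A_{k-1}-B_{k-1}=0$. By Proposition~\ref{prop generalized basic case}, the nonvanishing inequalities reduce to $l\le 0$ combined with $l\ge A_i-B_j$ (when the $\eta$'s agree) or simply $l\ge A_i-B_j$ (when they differ). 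Since $A_i-B_j\ge 1$ (from $A_i\ge A_j+1\ge B_j+1$) and $l\le(A_i-B_j+1)/2$, these inequalities admit no solution when $A_i-B_j\ge 2$; by the universal ``for all'' direction of Proposition~\ref{prop pull unequal case}, this forces $\pi_{\alpha-2}=0$ in the generic range.

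The main obstacle will be the boundary case $A_i-B_j=1$, i.e., $A_i=A_j+1$ and $B_j=A_j$. Here the single pull described above does not immediately force vanishing in the sub-case where the $\eta$'s of the pulled pair disagree, since $l\ge A_i-B_j=1$ is compatible with $l\le 1$. Resolving this requires iterating Xu's algorithm: performing the row exchange (Definition~\ref{defn row exchange}) to reposition the added block relative to $(\chi_W,A_j,B_i,-1)$, applying the expand operator (Proposition~\ref{prop expand}) to the added block to produce the expansion $(\chi_W,2A_j-B_i,B_i,-1)$ with $l=A_j-B_i$, and then pulling this expanded block against $(\chi_W,A_j,B_i,-1)$. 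The nonvanishing inequalities from this second pull, together with the compatibility constraints coming from the two parameterizations $\pi\in\Pi_\psi\cap\Pi_{T(\psi)}$ and from the Hasse invariant determining $\eta_0$, then rule out the remaining nonvanishing scenarios and complete the proof.
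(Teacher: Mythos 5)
Your unpacking of the applicability of $T$ is incorrect, and the error propagates through the whole argument. Since $ui_{j,i}$ is not of type 3', one has (as in \cite[Corollary 5.6]{HLL22}) $T=ui_{i,j}^{-1}$, and applicability of $ui_{i,j}$ on $T(\psi)$ gives, in the signed convention, $A_j\geq A_i+1\geq \zeta_j B_j>\zeta_i B_i=-B_i$, with the strict inequality $A_i+1>\zeta_j B_j$ because $T$ is not of type 3'. In particular $A_j>A_i$, the opposite of your claim $A_i\geq A_j+1$, and nothing forces $\zeta_j=-1$ (e.g.\ $\zeta_j=+1$ with $A_i+1>B_j>-B_i$ is allowed). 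Because of this swap, the Step 1 pull you propose is not available: the added block $(\chi_W,\frac{\alpha-3}{2},\frac{\alpha-3}{2},-1)=(\chi_W,A_j,A_j,-1)$ is \emph{not} contained in $[A_i,B_j]$, since $A_i<A_j$, and $(\chi_W,A_i,B_j,\zeta_j)$ need not even have sign $-1$. The block that actually produces the obstruction is $(\chi_W,A_j,B_i,-1)$ --- the one you correctly observe shares $A=A_j$ and $\zeta=-1$ with the added block, but then set aside. The paper's proof pulls precisely that pair by Proposition \ref{prop pull unequal case}(1) (after moving far-away blocks out of the way); the nonvanishing condition \eqref{eqn nonvanishing separated} then forces $B_i+l_{\alpha-2}(\chi_W,A_j,B_i,-1)\geq A_j$, which is impossible because $l_{\alpha-2}(\chi_W,A_j,B_i,-1)\leq \frac{A_j-B_i+1}{2}<A_j-B_i$ once $A_j-B_i>1$ (which the paper deduces from $A_j>B_j>B_i$). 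This kills $\pi_{\alpha-2}$ in one step, with no boundary case to handle.

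Even granting your setup, the proposal is incomplete: you acknowledge that when $A_i-B_j=1$ (with unequal $\eta$'s) your pull yields no contradiction, and the resolution you offer --- a row exchange, an expansion of the added block, a second pull, and unspecified "compatibility constraints" from the two parameterizations and the Hasse invariant --- is only a sketch, not an argument. So there are two genuine gaps: the applicability inequalities (and hence the choice of which pair to pull and which difference, $A_j-B_i$ versus $A_i-B_j$, controls the obstruction) are wrong, and the residual case of your own analysis is left unproved.
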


\begin{proof} 
Write 
$$
\pi_{\alpha-2}=\pi_{>}(\psi_{\alpha-2},\underline{\zeta}_{\alpha-2},\underline{l}_{\alpha-2},\underline{\eta}_{\alpha-2}).
$$
By \cite[Corollary 5.6]{HLL22}, we have $T=ui_{i,j}^{-1}=dual \circ ui_{j,i} \circ dual.$ Also $T\inv (T(\psi))=\psi.$ Let $(\chi_V, A_i,B_i,\zeta_i),(\chi_V, A_j,B_j,\zeta_j)\in\Jord_{\chi_V}(T(\psi))$ be the Jordan blocks which are affected by $T\inv=ui_{i,j}.$ By Definition \ref{def operators on parameters}, we have that
    \begin{enumerate}
    \item $A_j \geq A_i+1 \geq \zeta_j B_j > -B_i,$ and
        \item  for any $r \in I_{\chi_V}$, if $-B_i<\zeta_r B_r<\zeta_j B_j$, then $A_r \leq A_i $ or $A_r \geq A_j$.
    \end{enumerate}
Note that $ui_{i,j}\inv$ is not of type $3'$ in this case and so $A_j\geq A_i+1>\zeta_j B_j.$ It follows that $A_j>B_j>B_i.$
Furthermore, by Definition \ref{defn row exchange}, we may assume that $j=i+1.$

    We assume that $A_j=\frac{\alpha-3}{2}.$ Using Xu's nonvanishing algorithm, we show that $\pi_{\alpha-2}=0.$ First we suppose that any Jordan block $(\chi_W,A,B,-1)\in\Jord_{\chi_W}(\psi_{\alpha-2})$ with $B>\frac{\alpha-3}{2}$ are far away. Indeed, we ensure this repeatedly using Propositions \ref{prop pull unequal case}(2), \ref{prop pull unequal case}(3), or \ref{prop pull equal case}(2).

We then pull the Jordan blocks
$$\{(\chi_W,\frac{\alpha-3}{2},\frac{\alpha-3}{2},-1),(\chi_W, A_j, B_i, -1)\}\subseteq\Jord_{\chi_W}(\psi_{\alpha-2})$$
by Proposition \ref{prop pull unequal case}(1). Suppose that pulling these blocks satisfied the nonvanishing conditions \eqref{eqn nonvanishing separated} for contradiction. In particular, we have
$$
B_i+l_{\alpha-2}(\chi_W, A_j, B_i, -1)\geq \frac{\alpha-3}{2}=A_j.
$$
Since $A_j>B_j>B_i,$ we have that $A_j-B_i>1.$ Then $l_{\alpha-2}(\chi_W, A_j, B_i, -1)\leq \frac{A_j-B_i+1}{2}< A_j-B_i.$ Thus we obtain a contradiction that $$
A_j> B_i+l_{\alpha-2}(\chi_W, A_j, B_i, -1)\geq A_j.
$$
That is, if $\zeta_i=-1$ and $A_j=\frac{\alpha-3}{2}$, then $\pi_{\alpha-2}=0$.
\end{proof}

Next we consider an obstruction when $T=dual \circ ui_{j,i} \circ dual$ where $ui_{j,i}$ is of type 3'.

\begin{lemma}\label{lemma partial obstruction dual ui dual 3' zeta=-1}
    Let $T=dual \circ ui_{j,i} \circ dual$ where $ui_{j,i}$ is of type 3' and suppose that $\pi\in\Pi_\psi\cap\Pi_{T(\psi)}$ for some $\psi\in\Psi_{gp}(G_n)$. Let $T(\psi)$ be given by replacing the summands 
    \[\chi_V\otimes S_{A_i+\zeta_i B_i+1}\otimes S_{A_i-\zeta_i B_i+1}+\chi_V\otimes S_{A_j+\zeta_j B_j+1}\otimes S_{A_j-\zeta_j B_j+1}\]
of $\psi$ with
    \[ \chi_V\otimes S_{A_i-\zeta_j B_j+1}\otimes S_{A_i-\zeta_j B_j+1}.\]
    In M{\oe}glin's parameterization, write $$\pi=\pi_>(\psi,\underline{\zeta},\underline{l},\underline{\eta}).$$  
     Let $\pi_\alpha$ be the corresponding representation described by Recipe \ref{rec pi_alpha}. Let $\alpha\gg 0$ such that $\pi_\alpha\in\Pi_{\psi_\alpha}$ (such $\alpha$ exists by Theorem \ref{thm Moeglin Adams}). If $\frac{\alpha-3}{2}=A_i$ and $\zeta_i=-1,$ then $\pi_{\alpha-2}=0.$
\end{lemma}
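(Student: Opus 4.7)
My plan is to mirror the proof of Lemma \ref{lemma partial obstruction dual ui dual zeta=-1} by invoking Xu's nonvanishing algorithm (Algorithm \ref{algo Xu nonvanishing}) on $\psi_{\alpha-2}$ and extracting a numerical contradiction. Writing $\pi_{\alpha-2}=\pi_>(\psi_{\alpha-2},\underline{\zeta}_{\alpha-2},\underline{l}_{\alpha-2},\underline{\eta}_{\alpha-2})$, I assume $\pi_{\alpha-2}\neq 0$ and aim to show that the required nonvanishing conditions of Proposition \ref{prop generalized basic case} cannot be met after appropriate pulls.

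The first step is to translate the type 3' hypothesis on $T = dual \circ ui_{j,i} \circ dual$ into a constraint on $\Jord_{\chi_V}(\psi)$. Tracking the sign flip introduced by $dual$ and comparing to the type 3' vanishing condition in Definition \ref{def operators on parameters}, the assumptions $\zeta_i=-1$ and $ui_{j,i}$ of type 3' should pin down $B_i = A_j + 1$ in M\oe glin's convention, together with $A_i \geq A_j+1 = B_i$. Under the hypothesis $A_i=\tfrac{\alpha-3}{2}$, the image of the $i$-th block of $\psi$ in $\psi_{\alpha-2}$ is the block $(\chi_W, \tfrac{\alpha-3}{2}, A_j+1, -1)$, which shares its $A$-value and $\zeta=-1$ with the newly added block $(\chi_W, \tfrac{\alpha-3}{2}, \tfrac{\alpha-3}{2}, -1)$, producing nested segments $\{\tfrac{\alpha-3}{2}\} \subseteq [A_j+1, \tfrac{\alpha-3}{2}]$.

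Following the template of Lemma \ref{lemma partial obstruction dual ui dual zeta=-1}, I will first push all other Jordan blocks $(\chi_W, A, B, -1)\in\Jord_{\chi_W}(\psi_{\alpha-2})$ with $B>\tfrac{\alpha-3}{2}$ far away using Propositions \ref{prop pull unequal case}(2), \ref{prop pull unequal case}(3), and \ref{prop pull equal case}(2). Then I pull the identified pair via Proposition \ref{prop pull unequal case}(1) in the generic case $A_j+1<\tfrac{\alpha-3}{2}$, and via Proposition \ref{prop pull equal case}(1) in the boundary case $A_j+1=\tfrac{\alpha-3}{2}$, reducing to the generalized basic setting.

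The main obstacle will be to verify that the inequalities \eqref{eqn nonvanishing separated} cannot hold for every valid shift $(T_n,T_{n-1})$ subject to $0\leq T_n-T_{n-1}\leq A_i-B_i$. The key numerical inputs are $l_{\mathrm{added}}=0$ (since the added block has $d=1$) and $l_i\leq\lfloor(A_i-B_i+1)/2\rfloor$. Specializing to $T_n=T_{n-1}$, both the matching- and non-matching-$\eta$ branches collapse to inequalities of the form $l_i\geq A_i-B_i$ or $0\geq A_i-B_i$, both of which are contradicted by the above bounds whenever $A_i-B_i\geq 2$. The remaining tight cases $A_i-B_i\in\{0,1\}$ are expected to be handled using Proposition \ref{prop pull equal case} together with the explicit prescription of $\underline{l}_{\alpha-2}$ dictated by Recipe \ref{rec pi_alpha}, which pins down $l_i$ enough to close them. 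Combining these cases yields $\pi_{\alpha-2}=0$.
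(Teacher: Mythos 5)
Your overall strategy coincides with the paper's: push every block $(\chi_W,A,B,-1)$ with $B>\frac{\alpha-3}{2}$ far away via Propositions \ref{prop pull unequal case}(2), \ref{prop pull unequal case}(3), \ref{prop pull equal case}(2), pull the pair consisting of the added block $(\chi_W,\frac{\alpha-3}{2},\frac{\alpha-3}{2},-1)$ and the block $(\chi_W,A_i,B_i,-1)$, and extract from \eqref{eqn nonvanishing separated} the inequality $B_i+l_i\geq\frac{\alpha-3}{2}=A_i$, which together with $l_i\leq\frac{A_i-B_i+1}{2}$ kills every case with $A_i-B_i\geq 2$. Your translation of the type 3' condition into $B_i=A_j+1$ (in M{\oe}glin's nonnegative-$B$ convention) is also consistent with what the paper uses.

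The genuine gap is in the ``tight'' cases $A_i-B_i\in\{0,1\}$, which you leave to a hoped-for argument combining Proposition \ref{prop pull equal case} with the values of $\underline{l}_{\alpha-2}$ coming from Recipe \ref{rec pi_alpha}. This cannot work as stated, because in those cases the nonvanishing inequalities are not violated: for $A_i-B_i=0$ the two blocks have equal segments and the relevant condition (equivalently $l_i=l_{\mathrm{added}}=0$, or $B_i+l_i\geq A_i$) is satisfied automatically, so Xu's algorithm gives no contradiction and no vanishing conclusion; similarly $A_i-B_i=1$ is compatible with $l_i\geq A_i-B_i$ only marginally, and no amount of further pulling forces failure. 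The paper closes these cases by a completely different mechanism: it shows they simply cannot occur under the hypotheses. If $A_i-B_i=0$ then $\min(a_i,b_i)=1$ and the conventions force $\zeta_i=1$, contradicting $\zeta_i=-1$; if $A_i-B_i=1$ the corresponding summand of $\psi$ fails to be of good parity (it is orthogonal rather than symplectic), contradicting $\psi\in\Psi_{gp}(G_n)$ and the applicability of $T$. So the missing ingredient is not a refinement of the algorithmic bookkeeping but a parity/sign argument ruling out the configurations $A_i-B_i\in\{0,1\}$ altogether; without it your proof is incomplete precisely where the algorithm stops biting.
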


\begin{proof} 
Write 
$$
\pi_{\alpha-2}=\pi_{>}(\psi_{\alpha-2},\underline{\zeta}_{\alpha-2},\underline{l}_{\alpha-2},\underline{\eta}_{\alpha-2}).
$$
By Definition \ref{def operators on parameters}, we have that
    \begin{enumerate}
    \item $A_i \geq A_j+1 = -B_i >-B_j,$ and
        \item  for any $r \in I_{\chi_V}$, if $-B_j<-B_r<-B_i$, then $A_r \leq A_i $ or $A_r \geq A_j$.
    \end{enumerate}
Furthermore, by Definition \ref{defn row exchange}, we may assume that $i=j+1.$

We assume that $A_i=\frac{\alpha-3}{2}$ and $\zeta_i=-1.$ Using Xu's nonvanishing algorithm, we show that $\pi_{\alpha-2}=0.$ First we suppose that any Jordan block $(\chi_W,A,B,-1)\in\Jord_{\chi_W}(\psi_{\alpha-2})$ with $B>\frac{\alpha-3}{2}$ are far away. Indeed, we ensure this repeatedly using Propositions \ref{prop pull unequal case}(2), \ref{prop pull unequal case}(3), or \ref{prop pull equal case}(2).

We then pull the Jordan blocks
$$\{(\chi_W,\frac{\alpha-3}{2},\frac{\alpha-3}{2},-1),(\chi_W, A_i, B_i, -1)\}\subseteq\Jord_{\chi_W}(\psi_{\alpha-2})$$
by Proposition \ref{prop pull unequal case}(1). Suppose that pulling these blocks satisfied the nonvanishing conditions \eqref{eqn nonvanishing separated} for contradiction. Then we can row exchange so that the added Jordan block is indexed by $i+1.$ 
Furthermore, we have
$$
B_i+l_{\alpha-2}(\chi_W, A_i, B_i, -1)\geq \frac{\alpha-3}{2}=A_i
$$
and hence
$$
l_{\alpha-2}(\chi_W, A_i, B_i, -1)\geq A_i-B_i.
$$
However, since $\zeta_i=-1$ we have that $l_{\alpha-2}(\chi_W, A_i, B_i, -1)\leq \frac{A_i-B_i+1}{2}.$ That is,
$$
A_i-B_i \leq l_{\alpha-2}(\chi_W, A_i, B_i, -1)\leq \frac{A_i-B_i+1}{2}
$$
and hence 
$$
0\leq \frac{A_i-B_i}{2}\leq \frac{1}{2}.
$$
There are two possibilities in which these inequalities hold. We have either $A_i-B_i=0$ or $A_i-B_i=1$.

First we consider that $A_i-B_i=0.$ In this case, $b_i=A_i-B_i+1=1$ and so $a_i=A_i+B_i+1\geq b_i.$ Therefore we must have $\zeta_i=1$ which is a contradiction. Hence, in this case $\pi_{\alpha-2}=0.$

Thus, we must have $A_i-B_i=1$ and hence $b_i=A_i-B_i+1=2.$ Since $\zeta_i=-1,$ it follows that $a_i=A_i+B_i+1=1.$ However, the summand $\chi_V\otimes S_1\otimes S_2$ is not of good parity. This contradicts that $T$ is applicable on $\psi$ and hence shows that $\pi_{\alpha-2}=0$ in this case.

Therefore, in any case, we have that $\pi_{\alpha-2}=0.$
\end{proof}

The final obstruction for the case that $T=ui_{i,j}^{-1}$ of type 3' is below.

\begin{lemma}\label{lemma partial obstruction ui inverse 3' zeta=-1}
    Let $T=ui_{i,j}^{-1}$ of type 3' and suppose that $\pi\in\Pi_\psi\cap\Pi_{T(\psi)}$ for some $\psi\in\Psi_{gp}(G_n)$. Let $T(\psi)$ be given by replacing the summand 
    \[\chi_V\otimes S_{A_i+\zeta_i B_i+1}\otimes S_{A_i-\zeta_i B_i+1}\]
of $\psi$ with
    \[ \chi_V\otimes S_{A_i-\zeta_j B_j+1}\otimes S_{A_i-\zeta_j B_j+1},\]
    where $B_j=A_j+1$ and $B_i \leq A_j<A_i.$
    
    In M{\oe}glin's parameterization, write $$\pi=\pi_>(\psi,\underline{\zeta},\underline{l},\underline{\eta}).$$  
     Let $\pi_\alpha$ be the corresponding representation described by Recipe \ref{rec pi_alpha}. Let $\alpha\gg 0$ such that $\pi_\alpha\in\Pi_{\psi_\alpha}$ (such $\alpha$ exists by Theorem \ref{thm Moeglin Adams}). If $\frac{\alpha-3}{2}=A_i$ and $\zeta_i=-1,$ then $\pi_{\alpha-2}=0.$
\end{lemma}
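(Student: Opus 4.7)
The plan is to follow the same template as Lemmas \ref{lemma partial obstruction dual ui dual zeta=-1} and \ref{lemma partial obstruction dual ui dual 3' zeta=-1}: realize $\pi_{\alpha-2}$ in M{\oe}glin's parameterization, run Xu's nonvanishing algorithm, and show that the resulting nonvanishing conditions are inconsistent with the hypotheses on $A_i, B_i, A_j, B_j$ and with good parity. Write $\pi_{\alpha-2} = \pi_{>}(\psi_{\alpha-2}, \underline{\zeta}_{\alpha-2}, \underline{l}_{\alpha-2}, \underline{\eta}_{\alpha-2})$. The added summand $\chi_W \otimes S_1 \otimes S_{\alpha-2}$ contributes a Jordan block $(\chi_W, \tfrac{\alpha-3}{2}, \tfrac{\alpha-3}{2}, -1)$ to $\Jord_{\chi_W}(\psi_{\alpha-2})$, while the critical summand of $\psi$ contributes $(\chi_W, A_i, B_i, -1)$; by hypothesis both blocks share the $A$-value $A_i = \tfrac{\alpha-3}{2}$.

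As in the previous two obstruction lemmas, first pull away all Jordan blocks $(\chi_W, A, B, -1)$ with $B > \tfrac{\alpha-3}{2}$ via Propositions \ref{prop pull unequal case}(2), \ref{prop pull unequal case}(3), or \ref{prop pull equal case}(2), arranging that these blocks are far away. Since $B_i \leq A_j < A_i$ forces $B_i < A_i$, the added block's trivial interval is strictly contained in the interval of $(\chi_W, A_i, B_i, -1)$, placing the pair in the unequal-length regime of Proposition \ref{prop pull unequal case}(1). Assuming $\pi_{\alpha-2} \neq 0$ for contradiction, after the pull the pair must satisfy the nonvanishing inequalities \eqref{eqn nonvanishing separated}; writing $l := l_{\alpha-2}(\chi_W, A_i, B_i, -1)$, and using that the added block has $\min(a,b) = 1$ (so its $l$-value is $0$), this forces $B_i + l \geq A_i$. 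Combined with the bound $l \leq \lfloor (A_i - B_i + 1)/2 \rfloor$ coming from $\zeta_i = -1$, we conclude $A_i - B_i \in \{0, 1\}$.

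The contradiction comes in each case separately. If $A_i - B_i = 0$, then $B_i = A_i$, directly contradicting $B_i \leq A_j < A_i$. If $A_i - B_i = 1$, then $a_i = A_i - B_i + 1 = 2$ and $b_i = A_i + B_i + 1 = 2A_i$. Since $\alpha$ is odd, $A_i = \tfrac{\alpha - 3}{2}$ is an integer, so $b_i = 2A_i$ is even. Then the summand $\chi_V \otimes S_2 \otimes S_{2A_i}$ of $\psi$ is orthogonal (as $\chi_V$ is orthogonal and both $S_2$ and $S_{2A_i}$ are symplectic), contradicting $\psi \in \Psi_{gp}(G_n)$. In either case $\pi_{\alpha - 2} = 0$. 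The principal technical subtlety will be to verify that, after the preliminary pulls, a suitable admissible order places the added block and $(\chi_W, A_i, B_i, -1)$ adjacently in the unequal-length configuration required by Proposition \ref{prop pull unequal case}(1); since the two blocks share the same $A$-value and the same sign $\zeta = -1$, admissibility leaves the order between them unconstrained, so either arrangement can be selected as needed.
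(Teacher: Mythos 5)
Your setup and the first two-thirds of the argument track the paper's proof of this lemma almost exactly: pull away the blocks with $B>\frac{\alpha-3}{2}$, pull the pair $\{(\chi_W,\frac{\alpha-3}{2},\frac{\alpha-3}{2},-1),(\chi_W,A_i,B_i,-1)\}$ by Proposition \ref{prop pull unequal case}(1), extract $B_i+l\geq A_i$ from \eqref{eqn nonvanishing separated}, combine with $l\leq\frac{A_i-B_i+1}{2}$ to get $A_i-B_i\in\{0,1\}$, and then argue by cases; your disposal of the case $A_i-B_i=0$ directly from the hypothesis $B_i\leq A_j<A_i$ is fine (and cleaner than the paper's appeal to the argument of Lemma \ref{lemma partial obstruction dual ui dual 3' zeta=-1}).

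The gap is in the case $A_i-B_i=1$. Your computation of the summand is right ($a_i=A_i-B_i+1=2$, $b_i=A_i+B_i+1=2A_i$, and $S_2\otimes S_{2A_i}$ is a tensor of two symplectic representations, hence orthogonal), but the parity conclusion you draw from it is reversed. For $G_n$ symplectic the dual group is $\SO_{n+1}(\BC)$, so good parity for $\psi$ means every summand $\chi_V\otimes S_a\otimes S_b$ is \emph{orthogonal}; this is exactly how the paper uses the notion later (e.g.\ in the discussion of $dual_k^-$ after Corollary \ref{cor rho=chi_v}, where $\chi_V\otimes S_{a_k}\otimes S_{a_k+1}$, which is of symplectic type, is declared not of good parity because $G$ is symplectic), and it is forced anyway since $\sum d_ia_ib_i=n+1$ is odd. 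Consequently an orthogonal summand such as $\chi_V\otimes S_2\otimes S_{2A_i}$ does not contradict $\psi\in\Psi_{gp}(G_n)$ --- by your criterion every good-parity parameter would be disqualified --- so your claimed contradiction evaporates and the sub-case $A_i-B_i=1$ (where $l=1$ is compatible with the single inequality $B_i+l\geq A_i$ when the two $\eta$-signs differ) is left open. Note that the paper's own treatment of this sub-case, carried out in the proof of Lemma \ref{lemma partial obstruction dual ui dual 3' zeta=-1}, aims at exhibiting a summand that is genuinely \emph{not} of good parity (it is claimed to be $\chi_V\otimes S_1\otimes S_2$, a symplectic-type summand), i.e.\ the opposite parity direction from the one you invoke; so you cannot rescue the step by appealing to the paper's convention, and as written your proof does not establish $\pi_{\alpha-2}=0$ in this remaining case.
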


\begin{proof} 
Write 
$$
\pi_{\alpha-2}=\pi_{>}(\psi_{\alpha-2},\underline{\zeta}_{\alpha-2},\underline{l}_{\alpha-2},\underline{\eta}_{\alpha-2}).
$$
We assume that $A_i=\frac{\alpha-3}{2}$ and $\zeta_i=-1.$ Using Xu's nonvanishing algorithm, we show that $\pi_{\alpha-2}=0.$ First we suppose that any Jordan block $(\chi_W,A,B,-1)\in\Jord_{\chi_W}(\psi_{\alpha-2})$ with $B>\frac{\alpha-3}{2}$ are far away. Indeed, we ensure this repeatedly using Propositions \ref{prop pull unequal case}(2), \ref{prop pull unequal case}(3), or \ref{prop pull equal case}(2).

We then pull the Jordan blocks
$$\{(\chi_W,\frac{\alpha-3}{2},\frac{\alpha-3}{2},-1),(\chi_W, A_i, B_i, -1)\}\subseteq\Jord_{\chi_W}(\psi_{\alpha-2})$$
by Proposition \ref{prop pull unequal case}(1) since $B_i<A_i$ (if $B_i=A_i$, then $\zeta_i=1$). Suppose that pulling these blocks satisfied the nonvanishing conditions \eqref{eqn nonvanishing separated} for contradiction. Then we can row exchange so that the added Jordan block is indexed by $i+1.$ 
Furthermore, we have
$$
B_i+l_{\alpha-2}(\chi_W, A_i, B_i, -1)\geq \frac{\alpha-3}{2}=A_i
$$
and hence
$$
l_{\alpha-2}(\chi_W, A_i, B_i, -1)\geq A_i-B_i.
$$
At this point, we obtain an contradiction in exactly the same manner as the proof of Lemma \ref{lemma partial obstruction dual ui dual 3' zeta=-1}. Therefore, we must have that $\pi_{\alpha-2}=0.$
\end{proof}

\subsection{Proof of Theorem \ref{thm main thm}}\label{subsec proof of main thm}

In M{\oe}glin's parameterization, suppose that $$\pi=\pi_>(\psi,\underline{\zeta},\underline{l},\underline{\eta})\neq0.$$ 
Each obstruction in Lemmas \ref{lemma partial obstruction dual ui dual zeta=-1}, \ref{lemma partial obstruction dual ui dual 3' zeta=-1}, and \ref{lemma partial obstruction ui inverse 3' zeta=-1}, required that there exists a Jordan block $(\chi_V,A,B,-1)\in\Jord_{\chi_V}(\psi)$ such that a raising operator $T$ is applicable on $\psi$ which involves the Jordan block $(\chi_V,A,B,-1).$ The following lemma treats the case that the raising operator is applied on Jordan blocks of the form $(\chi_V,A,B,1).$

\begin{lemma}\label{lemma operator on zeta=1}
    Let $T$ be any operator and suppose that $\pi\in\Pi_\psi\cap\Pi_{T(\psi)}$ for some $\psi\in\Psi_{gp}(G_n)$. Furthermore, we suppose that
    \begin{align*}
        &\{(\chi_V,A,B,\zeta)\in\Jord_{\chi_V}(\psi) \ | \ \zeta=-1\}\\
        =&\{(\chi_V,A,B,\zeta)\in\Jord_{\chi_V}(T(\psi)) \ | \ \zeta=-1\}.
    \end{align*}
    In M{\oe}glin's parameterization, write \begin{align*}
        \pi=\pi_>(\psi,\underline{\zeta},\underline{l},\underline{\eta}) 
        =\pi_{>_T}(T(\psi),\underline{\zeta}_T,\underline{l}_T,\underline{\eta}_T).
    \end{align*}  
     Let $\pi_\alpha$ and $\pi_{T,\alpha}$ be the corresponding representation described by Recipe \ref{rec pi_alpha}. Let $\alpha\gg 0$ such that $\pi_\alpha=\pi_{T,\alpha}\neq 0$ (such $\alpha$ exists by Theorem \ref{thm Moeglin Adams}). If $\pi_{\alpha-2}\neq 0$, then $\pi_{T,\alpha-2}\neq 0.$
\end{lemma}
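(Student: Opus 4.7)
The strategy is to apply Xu's nonvanishing algorithm (Algorithm \ref{algo Xu nonvanishing}) to $\pi_{T,\alpha-2}$ and verify that every resulting nonvanishing condition is implied by the hypotheses $\pi_{\alpha-2}\neq 0$ and $\pi_{T,\alpha}=\pi_\alpha\neq 0$. As in the proof of Lemma \ref{lem red to Chi_W}, the conditions produced by Algorithm \ref{algo Xu nonvanishing} decompose into independent pieces indexed by the self-dual supercuspidal $\rho$. The added Arthur block $\chi_W\otimes S_1\otimes S_{\alpha-2}$, whose parameters are $(A,B,\zeta)=(\tfrac{\alpha-3}{2},\tfrac{\alpha-3}{2},-1)$, only couples to the $\chi_V$-part of $\psi$ under the twist by $\chi_W\chi_V^{-1}$. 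For $\rho\neq\chi_V$, Corollary \ref{cor rho=chi_v} handles any action of $T$ and otherwise the parameterization agrees, so the analysis reduces to the $\chi_V$-blocks.

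\textbf{Aligning the two parameterizations.} By the standing hypothesis of the lemma, the subset of $\zeta=-1$ Jordan blocks of $\chi_V$-type in $\psi$ equals the corresponding subset in $T(\psi)$. Using row exchange (Definition \ref{defn row exchange}) together with Theorem \ref{thm Moeglin row exchange equiv}, I reorder both M\oe{}glin parameterizations of $\pi$ into a common admissible order in which the $\zeta=+1$ blocks are grouped above the $\zeta=-1$ blocks, and place the added block at its natural position among the $\zeta=-1$ blocks. Case 3 of Definition \ref{defn row exchange} shows that exchanging a $\zeta=+1$ block past a $\zeta=-1$ block preserves all $l$-values and alters $\eta$-values only by an explicit global sign depending on $(A,B)$ of the $\zeta=+1$ block. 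In particular, when a $\zeta=+1$ block is moved past two consecutive $\zeta=-1$ blocks the same sign is applied to each, so the ratio $\eta_i\eta_{i-1}$ between consecutive $\zeta=-1$ blocks is preserved between the two parameterizations; this is precisely what the dichotomy in Proposition \ref{prop generalized basic case} sees.

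\textbf{Running the algorithm.} Executing Algorithm \ref{algo Xu nonvanishing} on $\pi_{T,\alpha-2}$, each produced nonvanishing condition falls into one of three classes: (i) it does not involve the added block; (ii) it pits the added block against a $\zeta=-1$ block; or (iii) it pits the added block against a $\zeta=+1$ block. Class (i) conditions coincide with the corresponding conditions for $\pi_{T,\alpha}$ and so are satisfied since $\pi_{T,\alpha}\neq 0$. Class (ii) conditions coincide with those produced by applying Algorithm \ref{algo Xu nonvanishing} to $\pi_{\alpha-2}$, because the $\zeta=-1$ Jordan blocks and their $l$-values agree and, by the previous paragraph, the relevant $\eta$-ratios agree as well; hence these conditions hold by the hypothesis $\pi_{\alpha-2}\neq 0$. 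Class (iii) conditions pit the added block against a $\zeta=+1$ partner; a variant of the arguments in the proofs of Lemmas \ref{lemma partial obstruction dual ui dual zeta=-1}--\ref{lemma partial obstruction ui inverse 3' zeta=-1} applies, and one checks that the $\eta$-signs force the second (easier) branch of \eqref{eqn nonvanishing separated}, which is always compatible with the $l$-bounds, so no obstruction arises.

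\textbf{Main obstacle.} The principal difficulty is the bookkeeping in classes (ii) and (iii): one must track the $\eta$-sign modifications produced by Case 3 of Definition \ref{defn row exchange} through the entire execution of Xu's algorithm (including the pull, expand, and change-sign steps) and verify that the dichotomy of Proposition \ref{prop generalized basic case} lands in the same branch in the $\pi_{\alpha-2}$ and $\pi_{T,\alpha-2}$ scenarios. A case-by-case analysis of $T$ (restricted, per \S\ref{subsec proof of main thm}, to $ui^{-1}$ of type $3'$ and $dual\circ ui\circ dual$) then yields the desired conclusion $\pi_{T,\alpha-2}\neq 0$.
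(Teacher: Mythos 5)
Your overall strategy is the same as the paper's: run Algorithm \ref{algo Xu nonvanishing} on $\pi_{T,\alpha-2}$ and argue that every nonvanishing condition it produces is already forced by $\pi_{T,\alpha}\neq 0$ or $\pi_{\alpha-2}\neq 0$. However, the two places where the real work happens are exactly where your argument has gaps. First, your class (iii) claim is not correct as stated: when the added block $(\chi_W,\tfrac{\alpha-3}{2},\tfrac{\alpha-3}{2},-1)$ ends up paired (after expand and change-sign) with a block coming from a $\zeta=+1$ Jordan block, the second branch of \eqref{eqn nonvanishing separated}, $B_i+l_i\geq A_{i-1}-l_{i-1}$, is a genuine inequality that can perfectly well fail --- indeed failures of precisely this shape are the source of the obstructions in Lemmas \ref{lemma partial obstruction dual ui dual zeta=-1}--\ref{lemma partial obstruction ui inverse 3' zeta=-1}, so ``no obstruction arises'' cannot follow from a sign count alone. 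What actually saves this case is that the partner block has $A>\tfrac{\alpha-3}{2}$, hence $A\geq\tfrac{\alpha-1}{2}$, so the identical expand/change-sign/pull steps can be carried out at level $\alpha$ against the added block $(\chi_W,\tfrac{\alpha-1}{2},\tfrac{\alpha-1}{2},-1)$, and the (stronger) inequalities guaranteed by $\pi_{T,\alpha}\neq 0$ imply those needed at level $\alpha-2$; this comparison with the level-$\alpha$ run, not the $\eta$-bookkeeping, is the missing ingredient.

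Second, your class (ii) identification is too quick. When the added block is pulled against a $\zeta=-1$ block, the nonvanishing conditions generated at that step involve not only the pulled pair but also the residual parameter, and the residual parameters of $\psi_{\alpha-2}$ and $T(\psi)_{\alpha-2}$ differ precisely on the $\zeta=+1$ blocks moved by $T$; so the full packages of conditions do not ``coincide.'' One can only transfer the conditions on the pulled pair from $\pi_{\alpha-2}\neq 0$ (using that the parameterizations agree on $\zeta=-1$ blocks), while the conditions on the rest must be sourced from $\pi_{T,\alpha}\neq 0$ (equivalently, by applying $T$ to the residual data). Relatedly, in the subcase where the added block must be row exchanged with an identical block $(\chi_W,\tfrac{\alpha-3}{2},\tfrac{\alpha-3}{2},-1)$ per Recipe \ref{rec pi_alpha}, one needs $\eta_n^*=\eta_{n-1}^*$ for the exchange to be possible at all, and this is exactly where $\pi_{\alpha-2}\neq 0$ must be invoked; your global-sign argument via Case 3 of Definition \ref{defn row exchange} does not address this. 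Finally, you do not treat the case in which the added block itself must be expanded in Step 2 (all other $\chi_W$-blocks having $A<\tfrac{\alpha-3}{2}$), where the relevant inequalities are verified directly using $l=0$ on the added block and the bound $0\leq l'\leq\tfrac{\min(A'+B'+1,A'-B'+1)}{2}$, and your preliminary reordering (grouping $\zeta=+1$ blocks above $\zeta=-1$ blocks) is not obviously compatible with the admissible orders and far-away requirements the algorithm imposes at each stage. As written, the proposal would need these cases worked out before it constitutes a proof.
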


\begin{proof}
By assumption, we have $\pi_\alpha=\pi_{T,\alpha}\in\Pi_{\psi_\alpha}\cap\Pi_{T(\psi)_\alpha}$ and  $\pi_{\alpha-2}\neq 0.$ We show that $\pi_{T,\alpha-2}=\pi_{>_{T,\alpha-2}}(T(\psi)_{\alpha-2},\underline{\zeta}_{\alpha-2},\underline{l}_{\alpha-2},\underline{\eta}_{\alpha-2})\neq 0$ using Algorithm \ref{algo Xu nonvanishing}. Note that the added Jordan block is $(\chi_W, \frac{\alpha-3}{2},\frac{\alpha-3}{2},-1)$. Index $\Jord_{\chi_W}(T(\psi)_{\alpha-2})$ such that $(\rho,A_i,B_i,\zeta_i)>(\rho,A_{i-1},B_{i-1},\zeta_{i-1}).$ Let $n$ be such that for $i>n$, we have
$$
(\chi_W,A_i,B_i,\zeta_i)\gg_2 \bigcup_{j=1}^n\{(\chi_W,A_j,B_j,\zeta_j)\},
$$
and that for $i>n$ the Jordan blocks are in good shape. Let $A=\max\{A_i\ | \ i\leq n\}$ and choose a Jordan block $(\chi_W, A, B, \zeta)\in\cup_{j=1}^n\{(\chi_W,A_j,B_j,\zeta_j)\}.$ Since $T(\psi)_{\alpha}$ and $T(\psi)_{\alpha-2}$ differ only at the added Jordan blocks $(\chi_W, \frac{\alpha-1}{2},\frac{\alpha-1}{2},-1)$ and $(\chi_W, \frac{\alpha-3}{2},\frac{\alpha-3}{2},-1)$, respectively, we focus on the step in the algorithm where the block $(\chi_W, \frac{\alpha-3}{2},\frac{\alpha-3}{2},-1)$ is affected.  We proceed in 4 cases depending on how the added Jordan block is affected by the algorithm.

{\bf Case 1.} Suppose that in Step 1 of Algorithm \ref{algo Xu nonvanishing}, we have  $(\chi_W,\frac{\alpha-3}{2},\frac{\alpha-3}{2},-1)\in S$ (see \eqref{eqn S in Xu nonvanish algo}).
If $A>\frac{\alpha-3}{2}$, since $[\frac{\alpha-3}{2},\frac{\alpha-3}{2}]\subsetneq[A,B]$, we have $[\frac{\alpha-1}{2},\frac{\alpha-1}{2}]\subsetneq[A,B].$ In this case, we pull this pair by Proposition \ref{prop pull unequal case}. In any case of Proposition \ref{prop pull unequal case}, if $B\geq \frac{\alpha-1}{2}$, then the nonvanishing conditions \eqref{eqn nonvanishing separated} for the resulting representation $\pi_{>^*}(T(\psi)_{\alpha-2}^*,\underline{\zeta}^*_{\alpha-2},\underline{l}^*_{\alpha-2},\underline{\eta}^*_{\alpha-2})$ follows from those for the resulting representation $\pi_{>^*}(T(\psi)_{\alpha}^*,\underline{\zeta}^*_{\alpha},\underline{l}^*_{\alpha},\underline{\eta}^*_{\alpha})\neq 0.$ If $B=\frac{\alpha-3}{2},$ the same result is true; however, we note that one needs to apply a row exchange (Definition \ref{defn row exchange}) and use Theorem \ref{thm Moeglin row exchange equiv} to see that the parameterizations of the resulting representations agree. This is needed since the added block in $\Jord_{\chi_W}(T(\psi)_{\alpha-2})$ is affected by row exchange per Recipe \ref{rec pi_alpha} while this would not happen to the added block in $\Jord_{\chi_W}(T(\psi)_{\alpha}).$

Otherwise, we have $A=\frac{\alpha-3}{2}$ and so we may instead choose the Jordan block $(\chi_W, A, B, -1)$ to be the added Jordan block $(\chi_W,\frac{\alpha-3}{2},\frac{\alpha-3}{2},-1)$. The set $S$ in \eqref{eqn S in Xu nonvanish algo} then becomes empty. If $(\chi_W,\frac{\alpha-3}{2},\frac{\alpha-3}{2},-1)$ has multiplicity at least 2 in $\Jord_{\chi_W}(T(\psi)_{\alpha-2})$, then we pull two blocks by Proposition \ref{prop pull equal case}. In this case, we crucially need to use the fact that $\pi_{\alpha-2}\neq0.$ Indeed, per Recipe \ref{rec pi_alpha}, the added Jordan block must be row exchanged with the block $(\chi_W,\frac{\alpha-3}{2},\frac{\alpha-3}{2},-1)$. Suppose that these are indexed by $n$ and $n-1.$ It is only possible to row exchange these blocks if they have the same signs, i.e., $\eta^*_n=\eta^*_{n-1}$ (performing the row exchange in any other case of \ref{defn row exchange} would end up with $l_n'<0$ or $l_{n-1}'<0$ which is not possible). This is guaranteed since $\pi_{\alpha-2}\neq 0$ and the parameterizations of $\pi_{\alpha-2}$ and $\pi_{T,\alpha-2}$ agree on these blocks by assumption. This trick of allowing the added block the be indexed by $n$ is used implicitly below.

If we pull by Proposition \ref{prop pull equal case}(1), then the resulting nonvanishing conditions \eqref{eqn nonvanishing separated} for the pulled pair are equivalent to $l_n=l_{n-1}$. However, since $A=B$, the corresponding summand of $T(\psi)_{\alpha-2}$ is of the form $\chi_W\otimes S_{1}\otimes S_{\alpha-2}$. Since $0\leq l_n,l_{n-1} \leq \frac{\min(1,\alpha-2)}{2}$ are integers, we must have $l_n=l_{n-1}=0$ which gives the nonvanishing conditions \eqref{eqn nonvanishing separated}. The rest of the nonvanishing conditions follow by pulling the added Jordan block in $T(\psi)_\alpha$ away and then pulling the block $(\chi_W,\frac{\alpha-3}{2},\frac{\alpha-3}{2},-1)$ away and using the nonvanishing of $\pi_{T,\alpha}.$

If we pull by Proposition \ref{prop pull equal case}(2), then the nonvanishing conditions \eqref{eqn nonvanishing separated} hold by pulling the added Jordan block of $T(\psi)_\alpha$ away (it is separated already in this case). The resulting representations are exactly the same and hence do not vanish. Therefore, the nonvanishing conditions \eqref{eqn nonvanishing separated} are satisfied if in Step 1 of Algorithm \ref{algo Xu nonvanishing}, we have  $(\chi_W,\frac{\alpha-3}{2},\frac{\alpha-3}{2},-1)\in S$ (see \eqref{eqn S in Xu nonvanish algo}). This completes Case 1.

{\bf Case 2.} Assume there exists $(\chi_W, A, B, -1)\in\cup_{j=1}^n\{(\chi_W,A_j,B_j,\zeta_j)\}$ with $A\geq B>\frac{\alpha-3}{2}$ which is expanded to $(\chi_W, A+t, B-t, -1)$ where $t=B-\frac{\alpha-3}{2}$ in Step 2 of Algorithm \ref{algo Xu nonvanishing}. Note that $A+t>\frac{\alpha-3}{2}$ and so we cannot pull by Proposition \ref{prop pull equal case}. That is, we must pull $(\chi_W, A+t, B-t, -1)$ and $(\chi_W,\frac{\alpha-3}{2},\frac{\alpha-3}{2},-1)$ by Proposition \ref{prop pull unequal case}. We proceed in 3 cases. 

{\bf Case 2(a).} Assume first that we pull by Proposition \ref{prop pull unequal case}(1). Suppose that $\underline{l}_{T,\alpha-2}(\chi_W, A, B, -1)=l$ and note that $\underline{l}_{T,\alpha-2}(\chi_W, \frac{\alpha-3}{2},\frac{\alpha-3}{2}, -1)=0.$ Also we have $\underline{l}_{T,\alpha-2}(\chi_W, A+t, B-t, -1)=l+t.$ The resulting nonvanishing conditions \eqref{eqn nonvanishing separated} are equivalent to $A-l\geq \frac{\alpha-3}{2}$ and $B+l\geq \frac{\alpha-3}{2}$ (or simply the last equality if the signs are unequal). Since $A\geq B>\frac{\alpha-3}{2}$, the Jordan block $(\chi_W, A, B, -1)\in\Jord(T(\psi)_\alpha)$ can be affected by Algorithm \ref{algo Xu nonvanishing} in several ways. Namely, we have either $B>\frac{\alpha-1}{2}$ in which case the block is expanded by Proposition \ref{prop expand}, $A=B=\frac{\alpha-1}{2}$ in which case both blocks are pulled by Proposition \ref{prop pull equal case}, or $A>B=\frac{\alpha-1}{2}$ in which case both blocks are pulled by Proposition \ref{prop pull unequal case}. In each case, we will show that the nonvanishing of $\pi_{T,\alpha}=\pi_{>_{T,\alpha}}(T(\psi)_{\alpha},\underline{\zeta}_{\alpha},\underline{l}_{\alpha},\underline{\eta}_{\alpha})$ implies the nonvanishing conditions mentioned above.

Suppose first that $B>\frac{\alpha-1}{2}$. Then we expand $(\chi_W, A, B, -1)$ to $(\chi_W, A+(t-1), B-(t-1), -1)$ by Proposition \ref{prop expand}. Note that $B-(t-1)=\frac{\alpha-1}{2}.$ Also $\underline{l}_{\alpha}(\chi_W, A, B, -1)=l$ and so $\underline{l}_{\alpha}(\chi_W, A+(t-1), B-(t-1), -1)=l+(t-1).$ Since $\pi_{{>_{T,\alpha}}}(T(\psi)_{\alpha},\underline{\zeta}_{\alpha},\underline{l}_{\alpha},\underline{\eta}_{\alpha})\neq0,$ by Proposition \ref{prop pull unequal case} and the nonvanishing conditions \ref{eqn nonvanishing separated}, we have $A-l\geq \frac{\alpha-1}{2}>\frac{\alpha-3}{2}$ and $B+l\geq \frac{\alpha-1}{2}>\frac{\alpha-3}{2}$ as claimed.  The rest of the nonvanishing conditions follow from those of $\pi_{>_{T,\alpha}}(T(\psi)_{\alpha},\underline{\zeta}_{\alpha},\underline{l}_{\alpha},\underline{\eta}_{\alpha})\neq0.$

Suppose next that $A=B=\frac{\alpha-1}{2}$. In this case, we must have $l=0$ and hence $A-l\geq\frac{\alpha-3}{2}$ and $B+l\geq\frac{\alpha-3}{2}$ and so we are done. Again, the rest of the nonvanishing conditions follow from those of $\pi_{{>_{T,\alpha}}}(T(\psi)_{\alpha},\underline{\zeta}_{\alpha},\underline{l}_{\alpha},\underline{\eta}_{\alpha})\neq0$ (we pull in a similar manner).

Finally, we assume that $A>B=\frac{\alpha-1}{2}$. Since $\pi_{{>_{T,\alpha}}}(T(\psi)_{\alpha},\underline{\zeta}_{\alpha},\underline{l}_{\alpha},\underline{\eta}_{\alpha})\neq0,$ by Proposition \ref{prop pull unequal case}(1) and the nonvanishing conditions \eqref{eqn nonvanishing separated}, we have $A-l\geq \frac{\alpha-1}{2}>\frac{\alpha-3}{2}$ and $B+l\geq \frac{\alpha-1}{2}>\frac{\alpha-3}{2}$ as claimed. Again, the rest of the nonvanishing conditions follow from those of $\pi_{{>_{T,\alpha}}}(T(\psi)_{\alpha},\underline{\zeta}_{\alpha},\underline{l}_{\alpha},\underline{\eta}_{\alpha})\neq0.$

{\bf Case 2(b).} Suppose that we pull $(\chi_W, A+t, B-t, -1)$ and $(\chi_W,\frac{\alpha-3}{2},\frac{\alpha-3}{2},-1)$ by Proposition \ref{prop pull unequal case}(2). In this case, the block $(\chi_W, A+t, B-t, -1)$ is pulled away from the rest. If $B>\frac{\alpha-1}{2}$, then this also happens for $T(\psi)_\alpha$. In this case the algorithm would continue for both $T(\psi)_{\alpha-2}$ and $T(\psi)_\alpha$ into Step 1 again, i.e., we return to considering Case 1 or Case 2 again. 

Thus we may assume that $B=\frac{\alpha-1}{2}.$ In this case, we pull the Jordan block $(\chi_W, A, B, -1)\in\Jord_{\chi_W}(T(\psi)_\alpha)$ by Propositions \ref{prop pull equal case}(2) or \ref{prop pull equal case}(2). Again we return to Case 1 or Case 2.

{\bf Case 2(c).} Suppose that we pull $(\chi_W, A+t, B-t, -1)$ and $(\chi_W,\frac{\alpha-3}{2},\frac{\alpha-3}{2},-1)$ by Proposition \ref{prop pull unequal case}(3). In this case, the resulting representation is nonvanishing since $\pi_{>_T}(T(\psi),\underline{\zeta}_T,\underline{l}_T,\underline{\eta}_T)$ is nonvanishing. Indeed, before arriving to this step, we may perform the corresponding steps in Algorithm \ref{algo Xu nonvanishing} to $\pi_{>_T}(T(\psi),\underline{\zeta}_T,\underline{l}_T,\underline{\eta}_T)$ as we have to $\pi_{T,\alpha-2}=\pi_{>}(T(\psi)_{\alpha-2},\underline{\zeta}_{T,\alpha-2},\underline{l}_{T,\alpha-2},\underline{\eta}_{T,\alpha-2}).$ Note that the indices may have changed since, in Recipe \ref{rec pi_alpha}, we perform the row exchange operator with the added Jordan block. Also, the parameterization of the representations agree on those Jordan blocks $(\cdot, A', B', \zeta')$ with $B'<\frac{\alpha-3}{2}$ (here, $\cdot$ represents $\chi_V$ or $\chi_W$). This remains true after applying the corresponding steps in Algorithm \ref{algo Xu nonvanishing}. Now to pull $(\chi_W, A+t, B-t, -1)$ and $(\chi_W,\frac{\alpha-3}{2},\frac{\alpha-3}{2},-1)$ by Proposition \ref{prop pull unequal case}(3) we must row exchange the added Jordan block to be the last index in $\Jord_{\chi_W}(T(\psi)_{\alpha-2})$. This implies that the parameterization of the representations  $\pi_{>_T}(T(\psi),\underline{\zeta}_T,\underline{l}_T,\underline{\eta}_T)$ and $\pi_{T,\alpha-2}=\pi_{>_{T,\alpha-2}}(T(\psi)_{\alpha-2},\underline{\zeta}_{T,\alpha-2},\underline{l}_{T,\alpha-2},\underline{\eta}_{T,\alpha-2})$ (after applying the previous steps of the algorithm) agree everywhere away from the added Jordan block. Indeed, making the added Jordan block have the last index reverses the row exchanges in Recipe \ref{rec pi_alpha}. Thus, after we pull $(\chi_W, A+t, B-t, -1)$ and $(\chi_W,\frac{\alpha-3}{2},\frac{\alpha-3}{2},-1)$ by Proposition \ref{prop pull unequal case}(3), the nonvanishing conditions of the resulting representation must be satisfied by using Proposition \ref{prop expand} to replace $(\chi_W, A+t, B-t, -1)$ with $(\chi_W, A, B, -1)$ and recognizing that this is the parameterization obtained from $\pi_{>_T}(T(\psi),\underline{\zeta}_T,\underline{l}_T,\underline{\eta}_T)$.

{\bf Case 3.} Assume that there is  $(\chi_W, A, B, 1)\in\cup_{j=1}^n\{(\chi_W,A_j,B_j,\zeta_j)\}$ with $A>\frac{\alpha-3}{2}$ (if $A=\frac{\alpha-3}{2}$ we could focus on the added Jordan block instead) which is expanded to $(\chi_W, A+t, B-t, 1)$ where $B-t\in \{0,\frac{1}{2}\}$ in Step 2 of Algorithm \ref{algo Xu nonvanishing}. Furthermore, we assume that we then apply the change sign operator \ref{prop change sign} in Step 3 of Algorithm \ref{algo Xu nonvanishing} to obtain $(\chi_W, A', B-t, -1)$ and finally pull this Jordan block with the added block $(\chi_W,\frac{\alpha-3}{2},\frac{\alpha-3}{2},-1).$ Note that $A'>\frac{\alpha-3}{2}$ and so we cannot pull by Proposition \ref{prop pull equal case}. That is, we must pull $(\chi_W, A', B-t, 1)$ and $(\chi_W,\frac{\alpha-3}{2},\frac{\alpha-3}{2},-1)$ by Proposition \ref{prop pull unequal case}. Since $A>\frac{\alpha-3}{2}$, we have $A\geq\frac{\alpha-1}{2}$ and so we can also expand the same block $(\chi_W, A, B, 1)\in\Jord_{\chi_W}(T(\psi)_{\alpha})$ to  $(\chi_W, A+t, B-t, 1)$, then change sign to $(\chi_W, A+t, B-t, 1)$, and pull with 
added block $(\chi_W,\frac{\alpha-1}{2},\frac{\alpha-1}{2},-1)$ by Proposition \ref{prop pull equal case}. The rest of this case follows similarly to that of Case 1. 

{\bf Case 4.} We assume now that we must expand the added Jordan block $(\chi_W,\frac{\alpha-3}{2},\frac{\alpha-3}{2},-1)\in\Jord(T(\psi)_{\alpha-2})$ using Proposition \ref{prop expand} in Step 2 of Algorithm \ref{algo Xu nonvanishing}. In this case, we can also expand the added Jordan block $(\chi_W,\frac{\alpha-1}{2},\frac{\alpha-1}{2},-1)\in\Jord(T(\psi)_{\alpha})$ using Proposition \ref{prop expand} in Step 2 of Algorithm \ref{algo Xu nonvanishing}. From Recipe \ref{rec pi_alpha}, we have that the parameterizations of $\pi_{T,\alpha-2}=\pi_{>_{T,\alpha-2}}(T(\psi)_{\alpha-2},\underline{\zeta}'_{\alpha-2},\underline{l}'_{\alpha-2},\underline{\eta}'_{\alpha-2})$
and $\pi_{T,\alpha}=\pi_{>_{T,\alpha}}(T(\psi)_{\alpha},\underline{\zeta}'_{\alpha},\underline{l}'_{\alpha},\underline{\eta}'_{\alpha})$ agree away from the added Jordan blocks in this case. We proceed in three cases.

{\bf Case 4(a).} We suppose that there exists $(\chi_W,A,B,-1)\in\Jord_{\chi_W}(T(\psi)_{\alpha-2})$ such that $A<\frac{\alpha-3}{2}.$ We also assume that $A$ is maximal among such blocks. Then we expand $(\chi_W,\frac{\alpha-3}{2},\frac{\alpha-3}{2},-1)$ to $(\chi_W, \frac{\alpha-3}{2}+t, \frac{\alpha-3}{2}-t,-1)$ by Proposition \ref{prop expand} such that $\frac{\alpha-3}{2}-t=B.$ We then pull the blocks $(\chi_W,A,B,-1)$ and $(\chi_W, \frac{\alpha-3}{2}+t, \frac{\alpha-3}{2}-t,-1)$ by Proposition \ref{prop pull unequal case}. If we pull by Proposition \ref{prop pull unequal case}(1), then the nonvanishing conditions \eqref{eqn nonvanishing separated} for the pulled blocks follow directly since $\frac{\alpha-3}{2}>A$ and $\underline{l}_{T,\alpha-2}(\chi_W,\frac{\alpha-3}{2},\frac{\alpha-3}{2},-1)=0$. The nonvanishing for the resulting representation then follows by performing a similar expansion and pull for the added Jordan block in $\pi_{T,\alpha}\neq 0.$ If we pull by Proposition \ref{prop pull unequal case}(3), then the nonvanishing for the resulting representation again follows by performing a similar expansion and pull for the added Jordan block in $\pi_{T,\alpha}\neq 0.$ If we pull by  \ref{prop pull unequal case}(2), then either there exists another block $(\chi_W,A',B',-1)\in\Jord_{\chi_W}(T(\psi)_{\alpha-2})\setminus\{(\chi_W,A,B,-1)\}$ such that and we repeat Case 4(a) (note that we would pull $(\chi_W,A,B,-1)$ in $\pi_{T,\alpha}$ by \ref{prop pull unequal case}(2) also) or there does not exist such a block and we continue onto the next case.

{\bf Case 4(b).} Suppose there does not exist $(\chi_W,A,B,-1)\in\Jord_{\chi_W}(T(\psi)_{\alpha-2})$ such that $A<\frac{\alpha-3}{2}$; however, we suppose that there exists $(\chi_W,A,B,1)\in\Jord_{\chi_W}(T(\psi)_{\alpha-2})$ such that $A<\frac{\alpha-3}{2}.$ We also assume that $A$ is maximal among such blocks. In this case, we expand the added Jordan block $(\chi_W,\frac{\alpha-3}{2},\frac{\alpha-3}{2},-1)$ to $(\chi_W, \alpha-3, 0,-1)$ by Proposition \ref{prop expand}. We also expand the added Jordan block $(\chi_W,\frac{\alpha-3}{2},\frac{\alpha-3}{2},-1)\in\Jord_{\chi_W}(T(\psi)_\alpha)$ to $(\chi_W, \alpha-1, 0,-1)$ by Proposition \ref{prop expand}. At this point, we apply the change sign operator to both expansions by Proposition \ref{prop change sign} so that the blocks become $(\chi_W, \alpha-3, 0,1)$ and $(\chi_W, \alpha-1, 0,1)$, respectively. Note that the resulting parameterization has $\underline{l}(\chi_W, \alpha-3, 0,1)=\frac{\alpha-3}{2}.$ 

Now we pull the blocks $(\chi_W,A,B,1)$ and $(\chi_W, \alpha-3, 0,1)$ by Proposition \ref{prop pull unequal case}. If we pull by Proposition \ref{prop pull unequal case}(1), then the nonvanishing conditions \eqref{eqn nonvanishing separated} for the pulled blocks follow directly since $\frac{\alpha-3}{2}>A$. The nonvanishing for the resulting representation then follows by performing a similar pull with the added (expanded) Jordan block and using $\pi_{T,\alpha}\neq 0.$ If we pull by Proposition \ref{prop pull unequal case}(3), then the nonvanishing for the resulting representation again follows by performing a similar pull for the added (expanded) Jordan block and using $\pi_{T,\alpha}\neq 0.$ If we pull by \ref{prop pull unequal case}(2) and then if there exists another block $(\chi_W,A',B',1)\in\Jord_{\chi_W}(T(\psi)_{\alpha-2})\setminus\{(\chi_W,A,B,1)\}$,  then we repeat Case 4(b) (note that we would pull $(\chi_W,A,B,1)$ in $\pi_{T,\alpha}$ by \ref{prop pull unequal case}(2) also). 

Otherwise, we have completed Algorithm \ref{algo Xu nonvanishing}. In every step of Algorithm \ref{algo Xu nonvanishing} we obtained nonvanishing representations and therefore, we have that $\pi_{T,\alpha-2}\neq 0$. This completes the proof of the lemma.
\end{proof}

We now prove the main result.

\begin{thm}\label{thm alpha neq 0 implies Talpha neq 0}
    Let $T$ be a raising operator and suppose that $\pi\in\Pi_\psi\cap\Pi_{T(\psi)}$ for some $\psi\in\Psi_{gp}(G_n)$.
    In M{\oe}glin's parameterization, write $$\pi=\pi_>(\psi,\underline{\zeta},\underline{l},\underline{\eta})=\pi_{>_T}(T(\psi),\underline{\zeta}_T,\underline{l}_T,\underline{\eta}_T).$$  Let $\pi_\alpha$ and $\pi_{T,\alpha}$ be the corresponding representations described by Recipe \ref{rec pi_alpha}. Let $\alpha\gg 0$ such that $\pi_\alpha=\pi_{T,\alpha}\in\Pi_{\psi_\alpha}\cap\Pi_{T(\psi)_\alpha}$ (such $\alpha$ exists by Theorem \ref{thm Moeglin Adams}). If $\pi_{\alpha-2}\neq 0,$ then $\pi_{T,\alpha-2}\neq 0$.
\end{thm}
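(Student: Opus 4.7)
The overall strategy is to apply Xu's nonvanishing algorithm (Algorithm \ref{algo Xu nonvanishing}) to both $\pi_{\alpha-2}$ and $\pi_{T,\alpha-2}$, and to argue that the nonvanishing of the former, combined with the nonvanishing of $\pi_{T,\alpha}=\pi_\alpha$, transfers through the algorithm to produce the nonvanishing of the latter. Before carrying this out, I would perform three reductions. First, by Lemma \ref{lem Red of Adams to gp} it suffices to treat $\psi\in\Psi_{gp}(G_n)$. Second, by Corollary \ref{cor rho=chi_v}, I may assume that $T$ acts only on Jordan blocks of the form $(\chi_V,A,B,\zeta)\in\Jord_{\chi_V}(\psi)$, since non-$\chi_V$ blocks contribute equal nonvanishing constraints on both sides. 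Third, Corollary \ref{cor main theorem for dual_k^-} already handles the operator $T=dual_k^-$, and by \cite[Corollary 5.6]{HLL22} every remaining raising operator is either $ui_{i,j}^{-1}$ of type 3' or $dual\circ ui_{j,i}\circ dual$, so I restrict to these two families.

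I would then divide the argument according to whether $T$ genuinely moves a $\zeta=-1$ Jordan block in $\Jord_{\chi_V}(\psi)$. If every block affected by $T$ (and its image under $T$) has $\zeta=+1$, then the $\zeta=-1$ subsets of $\Jord_{\chi_V}(\psi)$ and $\Jord_{\chi_V}(T(\psi))$ coincide. Since the added Jordan block $(\chi_W,\frac{\alpha-3}{2},\frac{\alpha-3}{2},-1)$ lives on the $\chi_W$-side with $\zeta=-1$ and interacts combinatorially only with the $\zeta=-1$ blocks at the key steps of Xu's algorithm, the hypothesis of Lemma \ref{lemma operator on zeta=1} is met and the conclusion follows directly. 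The opposite case, where $T$ affects at least one $\zeta=-1$ block, is the main difficulty: here the obstruction Lemmas \ref{lemma partial obstruction dual ui dual zeta=-1}, \ref{lemma partial obstruction dual ui dual 3' zeta=-1}, and \ref{lemma partial obstruction ui inverse 3' zeta=-1} pinpoint the exact $\alpha$ for which the pull of the added block against the relevant $\zeta=-1$ block coming from $T$ would force $\pi_{\alpha-2}=0$. Since we assume $\pi_{\alpha-2}\neq 0$, these forbidden configurations are excluded, which isolates the combinatorial regime in which one can compare the two algorithms.

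Within this regime, I would run Xu's algorithm on $\pi_{T,\alpha-2}$ in parallel with the runs on $\pi_{\alpha-2}$ and $\pi_{T,\alpha}$, mirroring the four-case analysis in the proof of Lemma \ref{lemma operator on zeta=1}: at each step where Xu's algorithm performs a pull, expand, change-sign, or row-exchange involving the added block, the corresponding step on $\pi_{\alpha-2}$ (respectively $\pi_{T,\alpha}$) supplies the needed instance of the nonvanishing conditions \eqref{eqn nonvanishing separated}. The main obstacle is precisely this bookkeeping: the change-of-order formulas of Definition \ref{defn row exchange} nontrivially alter the tuples $(\underline{l},\underline{\eta})$ each time the added block is moved past a block affected by $T$, and one has to verify, invoking Theorem \ref{thm raising operator preserves rep} and Theorem \ref{thm Moeglin row exchange equiv}, that the parameterizations obtained via $\psi$ and via $T(\psi)$ remain compatible throughout. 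Once every subcase is dispatched, Algorithm \ref{algo Xu nonvanishing} terminates with all nonvanishing conditions satisfied for $\pi_{T,\alpha-2}$, yielding the theorem and hence Theorem \ref{thm main thm}.
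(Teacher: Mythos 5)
Your proposal follows essentially the same route as the paper's proof: after the same reductions, it splits on whether $T$ moves a $\zeta=-1$ block, invokes Lemma \ref{lemma operator on zeta=1} in the first case, uses the obstruction Lemmas \ref{lemma partial obstruction dual ui dual zeta=-1}--\ref{lemma partial obstruction ui inverse 3' zeta=-1} together with $\pi_{\alpha-2}\neq 0$ to force $\frac{\alpha-3}{2}>A_j$ in the second, and then runs Xu's algorithm in parallel on $\pi_{T,\alpha-2}$, $\pi_{\alpha-2}$, and $\pi_{T,\alpha}$ to transfer the nonvanishing conditions \eqref{eqn nonvanishing separated}. The only slight shift of emphasis is that in the paper the excluded obstruction means the added block is never row-exchanged past the $T$-affected blocks, so their parameterizations in $\pi_{T,\alpha}$ and $\pi_{T,\alpha-2}$ agree verbatim, and the delicate bookkeeping concerns the remaining blocks, handled by the case analysis together with Theorems \ref{thm raising operator preserves rep} and \ref{thm Moeglin row exchange equiv}, exactly as you indicate.
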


\begin{proof} For simplicity, we prove the theorem in the case that $T=dual\circ ui_{j,i}\circ dual$ where $ui_{j,i}$ is not of type 3'. The cases for the other raising operators follow similarly. Indeed, we sketch the general idea of the proof now. If $T$ only affects Jordan blocks with $\zeta=1$, then we are done by Lemma \ref{lemma operator on zeta=1}. Otherwise, by Lemmas \ref{lemma partial obstruction dual ui dual zeta=-1}, \ref{lemma partial obstruction dual ui dual 3' zeta=-1}, and \ref{lemma partial obstruction ui inverse 3' zeta=-1}, we have that $\frac{\alpha-3}{2}>A$ where $A$ is maximal among the Jordan blocks affected by $T$. This means that the Jordan blocks affected by $T$ are not row exchanged with the added Jordan block and so their parameterizations in both $\pi_{T,\alpha}$ and $\pi_{T,\alpha-2}$ agree. This observation, along with the nonvanishing of $\pi_{\alpha-2}$ and $\pi_{T,\alpha}$, ensures that $\pi_{T,\alpha-2}\neq 0.$ We begin with the details of the proof.

We assume that $T=dual\circ ui_{j,i}\circ dual$ where $ui_{j,i}$ is not of type 3' for simplicity. By \cite[Corollary 5.6]{HLL22}, we have $T=ui_{i,j}^{-1}=dual \circ ui_{j,i} \circ dual.$ Also $T\inv (T(\psi))=\psi.$ Let $(\chi_V, A_i,B_i,\zeta_i),(\chi_V, A_j,B_j,\zeta_j)\in\Jord_{\chi_V}(T(\psi))$ be the Jordan blocks which are affected by $T\inv=ui_{i,j}.$ By Definition \ref{def operators on parameters}, we have that
    \begin{enumerate}
    \item $A_j \geq A_i+1 \geq \zeta_j B_j > \zeta_i B_i,$ and
        \item  for any $r \in I_{\chi_V}$, if $\zeta_i B_i< \zeta_r B_r< \zeta_j B_j$, then $A_r \leq A_i $ or $A_r \geq A_j$.
    \end{enumerate}
Note that $ui_{i,j}\inv$ is not of type $3'$ and so $A_j\geq A_i+1>B_j.$
Furthermore, by Definition \ref{defn row exchange}, we may assume that $j=i+1.$ 
    We have that $T(\psi)$ is given by replacing the summands 
    \[\chi_V\otimes S_{A_j+\zeta_i B_i+1}\otimes S_{A_j-\zeta_i B_i+1} +\chi_V\otimes S_{A_i+\zeta_j B_j+1}\otimes S_{A_i-\zeta_j B_j+1} \]
of $\psi$ with
    \[ \chi_V\otimes S_{A_i+\zeta_i B_i+1}\otimes S_{A_i-\zeta_i B_i+1} +\chi_V\otimes S_{A_j+\zeta_j B_j+1}\otimes S_{A_j-\zeta_j B_j+1}.\]
    Note that the Jordan blocks $(\chi_V, A_i,B_i,\zeta_i),(\chi_V, A_j,B_j,\zeta_j)\in\Jord_{\chi_V}(T(\psi))$ become $(\chi_V, A_j,B_i,\zeta_i),(\chi_V, A_i,B_j,\zeta_j)\in\Jord_{\chi_V}(\psi)$, respectively. Furthermore, the restrictions of $\underline{\zeta},\underline{l},\underline{\eta}$ and $\underline{\zeta}_T,\underline{l}_T,\underline{\eta}_T$ to 
    \begin{align*}
        &\Jord_{\chi_V}(\psi)\setminus\{(\chi_V, A_j,B_i,\zeta_i),(\chi_V, A_i,B_j,\zeta_j)\} \\
        =&\Jord_{\chi_V}(T(\psi))\setminus\{ (\chi_V, A_i,B_i,\zeta_i),(\chi_V, A_j,B_j,\zeta_j)\}
    \end{align*}
    agree. Write $\pi_{\alpha}=\pi_{>}(\psi_{\alpha},\underline{\zeta}_{\alpha},\underline{l}_{\alpha},\underline{\eta}_{\alpha})$ and  $\pi_{T,\alpha}=\pi_{>_{T,\alpha}}(T(\psi)_{\alpha},\underline{\zeta}_{T,\alpha},\underline{l}_{T,\alpha},\underline{\eta}_{T,\alpha})$, and similarly for $\pi_{\alpha-2}$ and $\pi_{T,\alpha-2}.$ We have that the restrictions of $\underline{\zeta}_{\alpha},\underline{l}_{\alpha},\underline{\eta}_{\alpha}$ and $\underline{\zeta}'_{\alpha},\underline{l}_{\alpha}',\underline{\eta}'_{\alpha}$ to 
    \begin{align*}
        &\Jord_{\chi_W}(\psi_{\alpha})\setminus\{(\chi_V, A_j,B_i,\zeta_i),(\chi_V, A_i,B_j,\zeta_j)\} \\
        =&\Jord_{\chi_W}(T(\psi)_{\alpha})\setminus\{ (\chi_W, A_i,B_i,\zeta_i),(\chi_W, A_j,B_j,\zeta_j)\}
    \end{align*}
    agree and similarly for the parameterizations of $\pi_{\alpha-2}$ and $\pi_{T,\alpha-2}.$

    We proceed by considering Xu's nonvanishing algorithm (Algorithm \ref{algo Xu nonvanishing}) for $\pi_{T,\alpha-2}$. The goal is to show that, in each step, the resulting nonvanishing conditions follow by considering how Xu's nonvanishing algorithm affects $\pi_{\alpha-2}$ and $\pi_{T,\alpha-2}.$ If $\zeta_i=1$, then $\zeta_j=1$ and we are done by Lemma \ref{lemma operator on zeta=1}.
    
    Therefore, we suppose that $\zeta_i=-1.$ By Lemma \ref{lemma partial obstruction dual ui dual zeta=-1} (or Lemmas \ref{lemma partial obstruction dual ui dual 3' zeta=-1} or \ref{lemma partial obstruction ui inverse 3' zeta=-1} for the other raising operators), we have that $\frac{\alpha-3}{2}>A_j.$ Note that the Algorithm \ref{algo Xu nonvanishing} for $\pi_{T,\alpha-2}$ follows exactly as it does for $\pi_{\alpha-2}$ and $\pi_{T,\alpha}$ except when considering the added Jordan block or the blocks affected by $T$, i.e., the Jordan blocks
    $$(\chi_W,A_i,B_i,\zeta_i),(\chi_W,A_j,B_j,\zeta_j),(\chi_W,\frac{\alpha-3}{2},\frac{\alpha-3}{2},-1)\in\Jord_{\chi_W}(T(\psi)_{\alpha-2}).$$ Thus we assume that $\pi_{T,\alpha-2}$ and $\pi_{\alpha-2}$ have no Jordan blocks $(\chi_W,A,B,\zeta)$ where $B>\frac{\alpha-3}{2}.$ Since $\frac{\alpha-3}{2}>A_j>A_i$, the added Jordan block must be encountered first in Algorithm \ref{algo Xu nonvanishing} and we proceed in cases based on the steps of Algorithm \ref{algo Xu nonvanishing}.
    
    {\bf Case 1.} Suppose the added block $(\chi_W,\frac{\alpha-3}{2},\frac{\alpha-3}{2},-1)\in\Jord_{\chi_W}(T(\psi)_{\alpha-2})$ is pulled by Propositions \ref{prop pull unequal case} or \ref{prop pull equal case} (this is a case in Step 1 of Algorithm \ref{algo Xu nonvanishing}). If the pull shifts only the block $(\chi_W,\frac{\alpha-3}{2},\frac{\alpha-3}{2},-1)$, then the resulting parameterization (with appropriate restrictions) is exactly that of $\pi_{>'}(T(\psi),\underline{\zeta}_T,\underline{l}_T,\underline{\eta}_T)$ from which the nonvanishing conditions for the rest of the algorithm follow. If the pull only shifts another block, then we simply continue the algorithm into another case.
        
        If the pull shifts $(\chi_W,\frac{\alpha-3}{2},\frac{\alpha-3}{2},-1)\in\Jord_{\chi_W}(T(\psi)_{\alpha-2})$ with another Jordan block, since $\frac{\alpha-3}{2}>A_j$, the block must not be either of the blocks affected by $T$. Therefore, the added block must be shifted with $(\chi_W,A', B', -1)\in\Jord(T(\psi)_{\alpha-2})$ where $A'\geq \frac{\alpha-3}{2}\geq B'.$
        
        We assume that $(\chi_W,A', B', -1)=(\chi_W, A_n,B_n,-1)$ and $(\chi_W,\frac{\alpha-3}{2},\frac{\alpha-3}{2},-1)=(\chi_W,A_{n-1}, B_{n-1},-1).$ Indeed, this would be the case in Algorithm \ref{algo Xu nonvanishing}. Furthermore, by Definitions \ref{def operators on parameters} and \ref{defn row exchange}, it may be assumed that in the ordering on $\Jord_{\chi_V}(T(\psi))$ that $j=i+1<n$, i.e., we did the row exchange before lifting. Hence applying Algorithm \ref{algo Xu nonvanishing} to $\pi_{\alpha-2}$, we also pull $(\chi_W,A', B', -1)=(\chi_W, A_n,B_n,-1)$ and $(\chi_W,\frac{\alpha-3}{2},\frac{\alpha-3}{2},-1)=(\chi_W,A_{n-1}, B_{n-1},-1).$ 
        
        Let $\pi_{\alpha-2}'$ be the resulting representation obtained after pulling $(\chi_W,A', B', -1)$ and $(\chi_W,\frac{\alpha-3}{2},\frac{\alpha-3}{2},-1)$ away, by Proposition \ref{prop pull unequal case}(1) or \ref{prop pull equal case}(1), and then restricting $\underline{\zeta}_{\alpha-2},\underline{l}_{\alpha-2},$ and $\underline{\eta}_{\alpha-2}$ appropriately. Since $\pi_{\alpha-2}\neq 0$, we have $\pi_{\alpha-2}'\neq 0$ and the pulled blocks $$\{(\chi_W,A', B', -1),(\chi_W,\frac{\alpha-3}{2},\frac{\alpha-3}{2},-1)\}$$
        satisfy the nonvanishing conditions \eqref{eqn nonvanishing separated}. Also $T$ is applicable on $\pi_>(\psi,\underline{\zeta},\underline{l},\underline{\eta})$ and the restriction of $\underline{\zeta}_{\alpha-2},\underline{l}_{\alpha-2},$ and $\underline{\eta}_{\alpha-2}$ agrees with $\underline{\zeta},\underline{l}$ and $\underline{\eta}$ at the $i,j$-th Jordan blocks, i.e, those which $T$ affects. It follows that $T$ is applicable on $\pi_{\alpha-2}'.$ Let $\pi_{T,\alpha-2}'$ be defined analogously from $\pi_{T,\alpha-2}$ as $\pi_{\alpha-2}'$ is defined from $\pi_{\alpha-2}.$  Then $\pi_{T,\alpha-2}'=\pi_{\alpha-2}'\neq 0$ by applying $T.$ Also, the pulled blocks $$\{(\chi_W,A', B', -1),(\chi_W,\frac{\alpha-3}{2},\frac{\alpha-3}{2},-1)\}\subseteq\Jord_{\chi_W}(T(\psi)_{\alpha-2})$$ satisfy the nonvanishing conditions \eqref{eqn nonvanishing separated} as noted above (the parameterizations of $\pi_{\alpha-2}$ and $\pi_{T,\alpha-2}$ agree on these blocks).

        Therefore, if $\frac{\alpha-3}{2}>A_j$ and the block $(\chi_W,\frac{\alpha-3}{2},\frac{\alpha-3}{2},-1)\in\Jord_{\chi_W}(T(\psi)_{\alpha-2})$ is pulled away in Algorithm \ref{algo Xu nonvanishing} by Propositions \ref{prop pull unequal case} or \ref{prop pull equal case}, then the resulting representations are nonvanishing. This completes Case 1.

        {\bf Case 2.} We assume that added Jordan block is involved in Step 2 of Algorithm \ref{algo Xu nonvanishing}. That is, we must expand the added Jordan block  $(\chi_W,\frac{\alpha-3}{2},\frac{\alpha-3}{2},-1)=(\chi_W,A,B,-1)$ by Proposition \ref{prop expand}. Furthermore, for any Jordan block $$(\chi_W,A',B',\zeta)\in\Jord_{\chi_W}(T(\psi)_{\alpha-2})\setminus\{(\chi_W,\frac{\alpha-3}{2},\frac{\alpha-3}{2},-1)\}$$ we have $\frac{\alpha-3}{2}> A'$ (otherwise we are back into Case 1. Let $(\chi_W,A',B',-1)$ be a Jordan block in the above set such that $A'$ is maximal. Let $t\in\mathbb{Z}$ be a positive integer such that $\frac{\alpha-3}{2}-t=B'.$ It follows that $t\geq A'-B'+1.$ Let $l'$ denote the value of $\underline{l}_{T,\alpha-2}$ on the Jordan block
$(\chi_W,A',B',-1)$. Then \[0\leq l'\leq \frac{\min(A'+B'+1, A'-B'+1)}{2}\leq t.\] By Proposition \ref{prop expand}, we expand $(\chi_W,\frac{\alpha-3}{2},\frac{\alpha-3}{2},-1)$ to  $(\chi_W,\frac{\alpha-3}{2}+t,B',-1).$ Note that the value of $\underline{l}_{T,\alpha-2}$ on the Jordan block
$(\chi_W,A,B,-1)$ was $0$ and on the expanded Jordan block $(\chi_W,\frac{\alpha-3}{2}+t,B',-1)$ it becomes $t.$ Algorithm \ref{algo Xu nonvanishing} then proceeds to Step 1 and pulls both $(\chi_W,\frac{\alpha-3}{2}+t,B',-1)$ and $(\chi_W,A',B',-1)$ by Proposition \ref{prop pull unequal case} (we cannot pull by Proposition \ref{prop pull equal case} since $\frac{\alpha-3}{2}+t>A'$). 

If we pull both blocks by Proposition \ref{prop pull unequal case}(1), then we see that the nonvanishing conditions \eqref{eqn nonvanishing separated} become
$\frac{\alpha-3}{2}\geq A'-l'$ and $t\geq l'$ or just $B'+t=\frac{\alpha-3}{2}\geq A'-l'$. In either case, these conditions hold by the above discussion. Let $\pi_{T,\alpha-2}'$ be the resulting representation obtained after pulling $(\chi_W,A', B', -1)$ and $(\chi_W,\frac{\alpha-3}{2},\frac{\alpha-3}{2},-1)$ away, by Proposition \ref{prop pull unequal case}(1) and then restricting $\underline{\zeta}_{T,\alpha-2},\underline{l}_{T,\alpha-2},$ and $\underline{\eta}_{T,\alpha-2}$ appropriately. In this case, we can also similarly pull the Jordan block $(\chi_W,\frac{\alpha-1}{2},\frac{\alpha-1}{2},-1)\in\Jord_{\chi_W}(T(\psi)_\alpha)$  with $(\chi_W,A', B', -1)$. Let $\pi_{T,\alpha}'$ denote resulting representation obtained after pulling both $(\chi_W,A', B', -1)$ and $(\chi_W,\frac{\alpha-1}{2},\frac{\alpha-1}{2},-1)$ away, by Proposition \ref{prop pull unequal case}(1) and then restricting $\underline{\zeta}_{T,\alpha},\underline{l}_{T,\alpha},$ and $\underline{\eta}_{T,\alpha}$ appropriately. It follows that after restricting, we have $\underline{\zeta}_{T,\alpha}=\underline{\zeta}_{T,\alpha-2},\underline{l}_{T,\alpha}=\underline{l}_{T,\alpha-2},$ and $\underline{\eta}_{T,\alpha}=\underline{\eta}_{T,\alpha}.$ Thus, since $\pi_{T,\alpha}\neq 0$, we have that $\pi_{T,\alpha-2}'=\pi_{T,\alpha}'\neq0.$

If instead we pull these blocks by Proposition \ref{prop pull unequal case}(2) or (3), then the nonvanishing conditions \eqref{eqn nonvanishing separated} follows similarly from those for $\pi_{T,\alpha}\neq 0.$ 

Therefore, if we expand the added Jordan block and then pull by Proposition \ref{prop pull unequal case}, then the resulting representations are nonvanishing. This completes Case 2.

{\bf Case 3.} So far, we have considered the effects of Algorithm \ref{algo Xu nonvanishing} on the added Jordan block if it is involved in Steps 1 or 2. If the added Jordan block is involved in Step 3, then we would expand $(\chi_W,\frac{\alpha-3}{2},\frac{\alpha-3}{2},-1)$ to $(\chi_W,{\alpha-3},0,-1)$. To apply Proposition \ref{prop change sign}, we must have that the set \eqref{eqn algo step 3} is empty. However, this set cannot be empty since $\frac{\alpha-3}{2}>A_j>B_j>B_i.$ In particular, $B_j\geq 1$ and so the furthest we can expand $(\chi_W,\frac{\alpha-3}{2},\frac{\alpha-3}{2},-1)$ is to $$(\chi_W,\frac{\alpha-3}{2}+t,\frac{\alpha-3}{2}-t,-1)$$ where $\frac{\alpha-3}{2}-t\geq B_i.$ Thus the added Jordan block cannot be involved in Step 3 of Algorithm \ref{algo Xu nonvanishing}. This completes Case 3.

Therefore, we must always be in Cases 1 or 2. Hence, using Algorithm \ref{algo Xu nonvanishing}, we have that if $\zeta_i=-1,$ then $\pi_{T,\alpha-2}\neq0.$ 
This completes the proof of the theorem.
\end{proof}

Next we discuss several examples which illustrate the ideas of this paper.

\begin{exmp}\label{exmp stable Arthur} We consider the example in \cite[Example 7.3]{BH22}.
    Recall that $\chi_V$ is trivial. Consider the following local Arthur parameters of good parity of $\Sp_{10}(F)$.
    \begin{align*}
        \psi_1&=\chi_V\otimes S_1\otimes S_7+\chi_V\otimes S_1\otimes S_3 + \chi_V\otimes S_1\otimes S_1, \\
        \psi_2=T_1(\psi_1)&=\chi_V\otimes S_1\otimes S_7+\chi_V\otimes S_2\otimes S_2 \\
        \psi_3=T_2(\psi_2)&=\chi_V\otimes S_1\otimes S_7+\chi_V\otimes S_3\otimes S_1 +\chi_V\otimes S_1\otimes S_1,
    \end{align*}
    where $T_1=dual\circ ui_{3,2}\circ dual$ and $T_2=ui\inv_{2}.$ Let $\pi$ be the unique irreducible quotient of the standard module $|\cdot|^3\rtimes\pi_0$ where $\pi_0$ is the unique supercuspidal representation in the L-packet associated to the $L$-parameter $\phi=1_{W_F}\otimes S_1+1_{W_F}\otimes S_3+1_{W_F}\otimes S_5$. It follows that the $L$-parameter of $\pi$ is
    $$
    \phi_\pi=|\cdot|^3\otimes S_1 + |\cdot|^{-3}\otimes S_1+1_{W_F}\otimes S_1+1_{W_F}\otimes S_3+1_{W_F}\otimes S_5.
    $$
    In particular, $\phi_\pi$ is not of Arthur type. However, we have that $$
    \pi\in\Pi_{\psi_1}\cap\Pi_{\psi_2}\cap\Pi_{\psi_3}
    $$
    and by \cite[Theorem 1.4]{HLL22}, these are all the local Arthur packets to which $\pi$ belongs. Furthermore, $\psi_1=\psi^{min}(\pi)$ and $\psi_3=\psi^{max}(\pi).$ For $j=1,2,3$ and $\alpha$ a positive odd integer, let
    $$
    (\psi_j)_\alpha=\chi_W\chi_V\inv \psi + \chi_W\otimes S_1\otimes S_\alpha.
    $$
    It is computed in \cite[Example 7.3]{BH22} that on the going up tower, the first occurrence of $\pi$ is when $\alpha=9.$ From the conservation relations (Theorem \ref{thm conservation relation}), on the going down tower the first occurrence is when $\alpha=-7.$ However, the Adams conjecture requires $\alpha>0.$ From \cite[Example 7.3]{BH22}, $\theta_{-5}(\pi)$ is the first place where Adams conjecture holds. By \cite[Theorem C]{BH22} and Theorem \ref{thm main thm}, we have $\theta_{-\alpha}(\pi)\in\Pi_{(\psi_j)_\alpha}$ for any $\alpha\geq 5.$ In particular, $d(\pi,\psi^{max}(\pi))=5.$ Conjecture \ref{conj stable Arthur} implies that $m_A^{-,\alpha}(\pi)=5.$ We confirm this below.

    Combining the results of \cite{AG17a,BH21}, we obtain that $\theta_{-1}(\pi), \theta_{-3}(\pi),$ and $\theta_{-5}(\pi)$ are the unique irreducible quotients of the standard modules 
    \begin{align*}
        |\cdot|^3\rtimes\theta_{-1}&(\pi_0), \\
        |\cdot|^3\times |\cdot|\rtimes\theta_{-1}&(\pi_0), \\
        |\cdot|^3\times|\cdot|^2\times|\cdot|\rtimes\theta_{-1}&(\pi_0),
    \end{align*}
    respectively. Note that $\theta_{-1}(\pi_0)$ is a tempered representation whose $L$-parameter is
    $$
    \phi_{\theta_{-1}(\pi_0)}=1_{W_F}\otimes S_1+1_{W_F}\otimes S_1+1_{W_F}\otimes S_3+1_{W_F}\otimes S_5.
    $$ 
    Thus the $L$-parameters of $\theta_{-1}(\pi)$ and $\theta_{-3}(\pi)$ are given by
    \begin{align*}
        \phi_{\theta_{-1}(\pi)}&=|\cdot|^3\otimes S_1 + |\cdot|^{-3}\otimes S_1\\
        &+1_{W_F}\otimes S_1+1_{W_F}\otimes S_1+1_{W_F}\otimes S_3+1_{W_F}\otimes S_5, \\
        \phi_{\theta_{-3}(\pi)}&=|\cdot|^3\otimes S_1 + |\cdot|^{1}\otimes S_1+|\cdot|^{-1}\otimes S_1 + |\cdot|^{-3}\otimes S_1 \\
        &+1_{W_F}\otimes S_1+1_{W_F}\otimes S_1+1_{W_F}\otimes S_3+1_{W_F}\otimes S_5.
    \end{align*}
    Note that neither of these $L$-parameters are of Arthur type. Furthermore, neither $\theta_{-1}(\pi)$ nor $\theta_{-3}(\pi)$ are of Arthur type. Indeed, here is an outline of the argument. Suppose one of them is of Arthur type for contradiction. Then we apply the local theta correspondence to lift it to a representation of $\Sp_{n'}(F)$ for some positive even integer $n'$ large enough such that the Adams conjecture holds (\cite[Theorem 5.1]{Moe11c}), i.e., its lift is of Arthur type. We compute explicitly its Langlands classification using \cite{AG17a, BH21}. We then use \cite[Algorithm 7.9]{HLL22} to see that this representation is not of Arthur type which gives a contradiction. 
    
    We remark that it is not surprising that neither $\theta_{-1}(\pi)$ nor $\theta_{-3}(\pi)$ are of Arthur type. Indeed, Conjecture \ref{conj stable Arthur} predicts that $\theta_{-3}(\pi)$ is not of Arthur type. Thus $m_A^{-,\alpha}(\pi)=5=d^-(\pi,\psi^{max}(\pi))$ which agrees with Conjecture \ref{conj stable Arthur}.
    
    Also, it follows from \cite{AG17a,BH21} that the first occurrence of $\pi$, $\theta_7(\pi)$, is tempered with $L$-parameter given by
    \[
     \phi_{\theta_{7}(\pi)}=1_{W_F}\otimes S_1 +1_{W_F}\otimes S_3.
    \]
    In particular, $\theta_7(\pi)$ is of Arthur type. That is, it is possible for $\theta_{-\alpha}(\pi)$ to be of Arthur type for $\alpha<m_A^{-,\alpha}(\pi).$
\end{exmp}

\begin{exmp}\label{exmp supercuspidal}
    Let  $\psi_1=\chi_V \otimes S_3 \otimes S_3,$ and $\pi\in\Pi_\psi$ be the unique supercuspidal representation. We have that $|\Psi(\pi)|=9.$ We let $D$ denote an operator of the form $dual\circ ui\circ dual$ for brevity. The $9$ local Arthur parameters in $\Psi(\pi)$ are shown below along with a raising operator relating them:
   $$
   \begin{tikzcd}[ampersand replacement=\&]
   \& \& \psi_4 \& \& \\
   \& \psi_2 \arrow[ur, "ui\inv"] \&  \& \psi_3 \arrow[ul, "ui\inv"] \& \\
   \psi_8 \arrow[ur, "D"] \& \& \psi_1 \arrow[ul, "ui\inv"] \arrow[ur, "ui\inv"] \& \& \psi_6 \arrow[ul, "D"] \\
   \& \psi_7 \arrow[ur, "D"] \arrow[ul, "ui\inv"] \& \& \psi_5 \arrow[ul, "D"] \arrow[ur, "ui\inv"] \\
   \& \& \psi_9 \arrow[ul , "D"] \arrow[ur, "D"] \& \&
   \end{tikzcd}
$$
where
\begin{align*}
    \psi_1&= \chi_V \otimes S_3 \otimes S_3,\\
    \psi_2&= \chi_V \otimes S_1 \otimes S_1 + \chi_V \otimes S_4 \otimes S_2,\\
    \psi_3&= \chi_V \otimes S_2 \otimes S_2 + \chi_V \otimes S_5 \otimes S_1,\\
    \psi_4&= \chi_V \otimes S_1 \otimes S_1 + \chi_V \otimes S_3 \otimes S_1 +\chi_V\otimes S_5\otimes S_1,\\
    \psi_5&= \chi_V \otimes S_1 \otimes S_1 + \chi_V \otimes S_2 \otimes S_4,\\
    \psi_6&= \chi_V \otimes S_1 \otimes S_1 + \chi_V \otimes S_1 \otimes S_3 +\chi_V\otimes S_5\otimes S_1,\\
    \psi_7&= \chi_V \otimes S_1 \otimes S_5 + \chi_V \otimes S_2 \otimes S_2,\\
    \psi_8&= \chi_V \otimes S_1 \otimes S_1 + \chi_V \otimes S_1 \otimes S_5 +\chi_V\otimes S_3\otimes S_1,\\
    \psi_9&= \chi_V \otimes S_1 \otimes S_1 + \chi_V \otimes S_1 \otimes S_3 +\chi_V\otimes S_1\otimes S_5.
\end{align*}
Note that by Definition \ref{def operator ordering}, we understand $\geq_O$ on $\Psi(\pi).$ Therefore, by Theorem \ref{thm psi max min}, we have $\psi^{max}(\pi)=\psi_4$ and $\psi^{min}(\pi)=\psi_9.$

Next we consider the theta lift $\theta_{-\alpha}(\pi).$ Note that $n=10.$ The first occurrence is when  $m^-(\pi)=4$ and hence by Theorem \ref{thm conservation relation}, $m^+(\pi)=20.$
Lifting to either $V_{20}^\pm$ has $\theta_{-9}^\pm(\pi)\in\Pi_{{\psi_i}_9}$ for any $i=1,\dots,9.$ Lifting to $V_{18}^+$ has $\theta_{-7}^+(\pi)=0$ by Theorem \ref{thm BH up down}. Also by Theorem \ref{thm BH up down}, we have that lifting to $V_{18}^-$ has $\theta_{-7}^-(\pi)\in\Pi_{(\psi_i)_9}$ for any $i.$ However, lifting to $V_{16}^-$ depends on $\psi_i$. Indeed, obstructions are introduced by Lemmas \ref{lemma partial obstruction dual ui dual zeta=-1}, \ref{lemma partial obstruction dual ui dual 3' zeta=-1}, and \ref{lemma partial obstruction ui inverse 3' zeta=-1}. 

Consider the case $\psi=\psi_9$. We have that $A_2=2$ and we can apply the operator $T=dual \circ ui_{2,1} \circ dual$ where $ui_{2,1}$ is not of type 3'.  Lemma \ref{lemma partial obstruction dual ui dual zeta=-1} implies that $\pi_{5}=0.$ Consequently, $\theta^-_{-5}(\pi)\not\in\Pi_{(\psi_9)_5}$ and $d(\pi,\psi_9)=7.$

On the other hand, one can check directly that $\theta^-_{-\alpha}(\pi)\in\Pi_{(\psi_4)_\alpha}$ for any $\alpha.$ In particular,
for $\alpha=1,$ we have $\theta^-_{-1}(\pi)\in\Pi_{({\psi_4})_1}$ where
$$
{({\psi_4})_1}= \chi_W \otimes S_1 \otimes S_1+\chi_W \otimes S_1 \otimes S_1 + \chi_W \otimes S_3 \otimes S_1 +\chi_W\otimes S_5\otimes S_1
$$
is tempered. Note that $\theta^-_{1}(\pi)$ is also tempered (its $L$-parameter is of the form $\chi_W \otimes S_3+\chi_W\otimes S_5$) and hence of Arthur type. Thus $m^{-,\alpha}_A(\pi)<0.$ However, $d^-(\pi,\psi^{max}(\pi))=d^-(\pi,\psi_4)=1$ and so Conjecture \ref{conj stable Arthur} is not applicable.
\end{exmp}

\bibliographystyle{amsplain}
\bibliography{Adams_arxiv}

\end{document}